\def\eqref#1{equation~\ref{#1}}
\def\1{\bm{1}}
\DeclareMathAlphabet{\mathsfit}{\encodingdefault}{\sfdefault}{m}{sl}
\SetMathAlphabet{\mathsfit}{bold}{\encodingdefault}{\sfdefault}{bx}{n}
\def\gD{{\mathcal{D}}}
\def\gL{{\mathcal{L}}}
\def\gX{{\mathcal{X}}}
\def\sR{{\mathbb{R}}}
\DeclareMathOperator*{\argmin}{arg\,min}
\newcommand{\cmark}{\ding{51}}%
\newcommand{\xmark}{\ding{55}}%
\theoremstyle{plain}
\newtheorem{theorem}{Theorem}[section]
\newtheorem{lemma}[theorem]{Lemma}
\theoremstyle{definition}
\newtheorem{assumption}[theorem]{Assumption}
\theoremstyle{remark}
\def\expandafter\normalsize\expandafter{%
    \normalsize
    \setlength\abovedisplayskip{2pt}
    \setlength\belowdisplayskip{2pt}
    \setlength\abovedisplayshortskip{2pt}
    \setlength\belowdisplayshortskip{2pt}
}
\title{Non-Convex Bilevel Optimization with Time-Varying Objective Functions}
\author{%
  Sen Lin\thanks{The work was done when Sen Lin was in The Ohio State University} \\
  Department of CS\\
  University of Houston\\
  \texttt{slin50@central.uh.edu} \\
  % examples of more authors
  \And
  Daouda Sow \\
Department of ECE\\
  The Ohio State University \\
\texttt{sow.53@osu.edu} \\
  \AND
  Kaiyi Ji \\
  Department of CSE\\
  University at Buffalo \\
  \texttt{kaiyiji@buffalo.edu} \\
  \And
  Yingbin Liang \\
  Department of ECE\\
  The Ohio State University\\
  \texttt{liang889@osu.edu} \\
  \And
  Ness Shroff \\
  Department of ECE \& CSE\\
  The Ohio State University\\
  \texttt{shroff.11@osu.edu} \\
}
\begin{document}

\maketitle

\begin{abstract}
Bilevel optimization has become a powerful tool in a wide variety of machine learning problems. However, the current nonconvex bilevel optimization
considers an offline dataset and static functions, which may not work well in emerging online applications with streaming data and time-varying functions.
In this work,  we study online bilevel optimization (OBO) where the functions can be time-varying and the agent continuously updates the decisions with  online streaming data. 
To deal with the function variations and the unavailability of the true hypergradients in OBO, we propose a single-loop online bilevel optimizer with window averaging (SOBOW), which 
% 1) updates the inner-level decision with one-step gradient descent and 2) 
updates the outer-level decision based on a window average of the most recent hypergradient estimations stored in the memory. Compared to existing algorithms, SOBOW is computationally efficient and does not need to know previous functions. 
To handle the unique technical difficulties rooted in single-loop update and function variations for OBO, 
we develop a novel analytical technique that disentangles the complex couplings between decision variables, and carefully controls the hypergradient estimation error. We show that SOBOW can achieve a sublinear bilevel local regret under mild conditions. Extensive experiments across multiple domains corroborate the effectiveness of SOBOW. 
\end{abstract}

\section{Introduction}
\label{introduction}

Bilevel optimization has attracted significant recent attention, which in general studies the following  problem:
{\small
\begin{align}\label{eq:bilevel}
    \min_{x\in\sR^{d_1}}~ f(x,y^*(x))~~~
    \text{s.t.}\quad y^*(x)=\arg\min_{y\in\sR^{d_2}} g(x,y).
\end{align}}%
Here both the outer-level function $f$ and the inner-level function $g$ are continuously differentiable, and the outer optimization problem is solved subject to the optimality of the inner problem. Due to its capability of capturing hierarchical structures in many machine learning problems,
this nested optimization framework has been exploited in a wide variety of applications, e.g., meta-learning \cite{bertinetto2018meta,ji2020provably}, hyperparameter optimization \cite{franceschi2018bilevel,bao2021stability}, reinforcement learning \cite{yang2018convergent,stadie2020learning} and neural architecture search \cite{liu2018darts,cui2019fast}.

However, numerous machine learning problems with hierarchical structures are {\em online} in nature, e.g., online meta-learning \cite{finn2019online} and online hyperparameter optimization \cite{zhan2018efficient}, where the current bilevel optimization framework cannot be directly applied due to the following reasons: (1) \emph{(streaming data)} The nature of online streaming data requires decision making on-the-fly with low regret, whereas the offline framework emphasizes more on the quality of the 
final solution; (2) \emph{(time-varying functions)} The objective functions in online applications can be time-varying because of non-stationary environments and changing tasks, in contrast to static functions considered in  \cref{eq:bilevel}; (3) \emph{(limited information)} The online learning is nontrivial when the decision maker only has limited information, e.g., regarding the inner-level function, which can be even more challenging with time-varying functions. For example, in wireless network control \cite{lin2006tutorial}, the controller has to operate under limited knowledge about the time-varying wireless channels (see Appendix).

To reap the success of bilevel optimization  in online applications, there is an urgent need to develop a new {\em online} bilevel optimization (OBO) framework. Generally speaking, OBO considers the  scenario where the data comes in an online manner and the agent continuously updates her outer-level decision based on the estimation of the optimal inner-level decision. Both outer-level and inner-level objective functions can be time-varying to capture the data distribution shifts in many online scenarios. Note that OBO is significantly different from single-level online optimization \cite{hazan2016introduction}, due to the unavailability  of the true outer-level objective function composed by $f$ and $y^*$.

% The study of OBO was recently initiated in \cite{tarzanagh2022online}, but much of this new framework still remains under-explored and not well understood. 
% In particular, 
% \cite{tarzanagh2022online} combines offline bilevel optimization with online optimization and proposes an online alternating time-averaged gradient method. This approach  suffers from several limitations: 1) multi-step update is required for inner-level decision variable $y_t$  at each time $t$, which can be problematic when only limited information of the inner-level function $g$ is available; 2) the hypergradient estimation at each time requires the knowledge of previous objective functions in a window and the evaluation of current models on each previous function, which can be inefficient and infeasible in online scenarios. Therefore, the first question we seek to answer in this work is: ``\emph{Can we develop  efficient and practical OBO algorithms?}''\yl{efficient and practical do not convey much information}
% OBO has the potential to be used in various online applications, by capturing the hierarchical structures therein in an online manner. Another important question we answer is to find a broad set of applications for which OBO can become a useful tool. \yl{need to think about how to pose this as our contributions}

The study of OBO was recently initiated in \cite{tarzanagh2022online}, but much of this new framework still remains under-explored and not well understood. 
In particular, 
\cite{tarzanagh2022online} combines offline bilevel optimization with online optimization and proposes an online alternating time-averaged gradient method. Such an approach  suffers from several limitations: 1) Multi-step update is required for inner-level decision variable $y_t$  at each time $t$, which can be problematic when only limited information of the inner-level function $g$ is available. 2) The hypergradient estimation at each time requires the knowledge of previous objective functions in a window, and also  evaluates current models on each previous function; such a design can be inefficient and infeasible in online scenarios. 
In this work, we seek to address these limitations and develop a new OBO algorithm that can work efficiently 
without the knowledge of previous objective functions.
\vspace{-0.3cm}
\begin{table*}[htp]
\vspace{-0.3cm}
\footnotesize
\caption{Comparison of OBO algorithms. `HV' product refers to the Hessian inverse-vector product in hypergradient estimation. OAGD estimates the hypergradient by $\frac{1}{W}\sum_{i=0}^{K-1}\eta^i \hat{\nabla} f_{t-i}(x_t, y_{t+1})$, which requires the evaluation of $f_{t-i}$ on current model $(x_t, y_{t+1})$.}
\vspace{0.2cm}
\label{tab:compare}
\resizebox{\textwidth}{!}{\begin{tabular}{|c|c|c|c|} 
 \hline
 Algorithm & Single-loop update & Study estimation error of HV product & Do not require previous function info \\ [0.5ex] 
 \hline\hline
 OAGD \cite{tarzanagh2022online} & \xmark & \xmark & \xmark \\ 
 \hline
 SOBOW (this paper)  & \cmark & \cmark & \cmark \\
 \hline
\end{tabular}}
\end{table*}

The main contributions can be summarized as follows.

% (\emph{Efficient algorithm design}) We propose a new single-loop online bilevel optimizer with window averaging (SOBOW), where only one step gradient descent is needed to update the inner-level decision. To deal with the inaccurate hypergradient estimation and non-convexity of the outer-level objection function, SOBOW stores the hypergradient estimations in previous rounds and updates the current outer-level decision using the averaged hypergradient estimations within a window. Compared to the method in \cite{tarzanagh2022online}, SOBOW is more practical, i.e., it works in a fully online manner with limited information about the objective functions, and more computationally efficient, i.e., without re-evaluating the previous objective functions on the current decisions.

(\emph{Efficient algorithm design}) We propose a new single-loop online bilevel optimizer with window averaging (SOBOW), which works in a fully online manner with limited information about the objective functions. In contrast to the OAGD algorithm in \cite{tarzanagh2022online}, SOBOW has the following major differences (as summarized in \cref{tab:compare}): (1) (\emph{single-loop update}) Compared to the multi-step updates of the inner-level decision $y_t$ at each round in OAGD, we only require a one-step update of $y_t$, which is more practical for online applications where only limited information about the inner-level function $g_t$ is available. 
(2) (\emph{estimation error of Hessian inverse-vector product}) Estimating the hypergradient requires the outer-level 
 Hessian inverse-vector product.    \cite{tarzanagh2022online} assumes that the exact Hessian inverse-vector product can be obtained, which can introduce high computational cost. In contrast, we consider a more practical scenario where there could be an estimation error in the Hessian-inverse vector product calculation in solving the linear system.  (3) (\emph{window averaged hypergradient descent}) While a window averaged hypergradient estimation is considered in both OAGD and SOBOW, SOBOW is more realistic and efficient than OAGD. Specifically, 
 OAGD requires the knowledge of the most recent objective functions within a window and evaluates the current model on each previous function at every round, whereas SOBOW only stores the historical hypergradient estimation in the memory without any additional knowledge or evaluations about previous functions.

(\emph{Novel regret analysis}) Based on the previous studies in single-level online non-convex optimization \cite{hazan2017efficient,aydore2019dynamic}, we introduce a new bilevel local regret as the performance measure of OBO algorithms.  We show that the proposed SOBOW algorithm can achieve a sublinear bilevel local regret under mild conditions. 
Compared to offline bilevel optimization and OAGD, new technical challenges need to be addressed here: (1) unlike the multi-step update of inner-level variable $y_t$ in OAGD, the single-step update in SOBOW will lead to an inaccurate estimation of the optimal inner-level decision $y_t^*(x_t)$ and consequently a large estimation error for the hypergradient; this problem can be addressed in offline bilevel optimization by controlling the gap $\|y_t^*(x_t)-y_{t+1}^*(x_{t+1})\|^2$, which depends only on $\|x_t-x_{t+1}\|^2$ for static inner-level functions. But this technique cannot be  applied here due to the time-varying $g_t$;
(2) the function variations in OBO can blow up the impact of the hypergradient estimation error if not handled appropriately, whereas offline bilevel optimization does not have this issue due to the static functions therein. 
% (3) a new three-level analysis is required to understand the involved couplings among the estimation errors about Hessian inverse-vector product, inner-level decision and outer-level decision in online learning.
Towards this end, 
we appropriately control the estimation error of the Hessian inverse-vector product at each round, and
 disentangle the complex couplings between the decision variables through a three-level analysis. This enables the control of the hypergradient estimation error, by using a decaying coefficient to diminish the impact of the large inner-level estimation error and leveraging the historical information to smooth the update of the outer-level decision.

% (\emph{Novel regret analysis}) Based on the previous studies in single-level online non-convex optimization \cite{hazan2017efficient,aydore2019dynamic}, we introduce a new bilevel local regret as the performance measure of OBO algorithms.  We show that the proposed SOBOW algorithm can achieve a sublinear bilevel local regret under mild conditions. To this end, new technical challenges need to be addressed, which stem from the single-loop update, the inexact outer-level Hessian inverse-vector product, and the function variations. Specifically, by appropriately controlling the estimation error of the Hessian inverse-vector product at each round, we disentangle the complex couplings between the decision variables through a three-level analysis. This enables the control of the hypergradient estimation error, by using a decaying coefficient to diminish the impact of the large inner-level estimation error and leveraging the historical information to smooth the update of the outer-level decision.

(\emph{Extensive experimental evaluations})
OBO has the potential to be used in various online applications, by capturing the hierarchical structures therein in an online manner. In this work, the experimental results clearly validate the effectiveness of SOBOW in online hyperparameter optimization and online hyper-representation learning. 
% We also discuss the potential applications of OBO in online meta-learning and online adversarial training.

\section{Related Work}

\emph{Online optimization~} Online optimization has been extensively studied for strongly convex and convex functions in terms of both static regret, e.g.,  \cite{zinkevich2003online, hazan2007logarithmic, mcmahan2010adaptive} and dynamic regret, e.g., \cite{besbes2015non,mokhtari2016online, yang2016tracking, zhang2017improved, zhao2021improved}. Recently, there has been increasing interest in studying online optimization for non-convex functions, e.g.,  \cite{agarwal2019learning, suggala2020online, lesage2020second, park2021diminishing, ghai2022non}, where minimizing the standard definitions of regret is computationally intractable. Specifically, \cite{hazan2017efficient} introduced a notion of local regret in the spirit of the optimality measure in non-convex optimization, and developed an algorithm that averages the gradients of the most recent loss functions evaluated in the current model. \cite{aydore2019dynamic} proposed a dynamic local regret  to handle the distribution shift and also a computationally efficient SGD update for achieving sublinear regret.  \cite{heliou2020online} and  \cite{heliou2021zeroth} studied the zeroth-order online non-convex optimization  where the agent only has access to the actual loss incurred at each round.

\emph{Offline bilevel optimization~} Bilevel optimization was first introduced in the seminal work \cite{bracken1973mathematical}. Following this work, a number of algorithms have been developed to solve the bilevel optimization problem. Initially, the bilevel problem was reformulated into a single-level constrained problem based on the optimality conditions of the inner-level problem \cite{hansen1992new,shi2005extended,lv2007penalty,moore2010bilevel}, which typically involves many constraints and is difficult to implement in machine learning problems. Recently, gradient-based bilevel optimization algorithms have attracted much attention due to their simplicity and efficiency. These  can be roughly classified into two categories,  the approximate implicit differentiation (AID) based approach \cite{domke2012generic, pedregosa2016hyperparameter,gould2016differentiating, ghadimi2018approximation,grazzi2020iteration,lorraine2020optimizing,ji2021bilevel} and the iterative differentiation (ITD) based approach \cite{maclaurin2015gradient,franceschi2017forward,shaban2019truncated,mackay2019self,grazzi2020iteration}. 
Bilevel optimization has also been studied very recently for the cases with stochastic objective functions \cite{ghadimi2018approximation,ji2020provably,chen2021single, khanduri2021near, guo2021randomized} and multiple inner minima \cite{li2020improved,liu2020generic,liu2021value,sow2022constrained}. 
Notably, a novel value-function based method was first proposed in \cite{liu2021value} to deal with the non-convexity of the inner-level functions. Some recent studies, e.g., \cite{li2022fully, dagreou2022framework,  hong2023two},  have also explored single-level algorithms for offline bilevel optimization with time-invariant objective functions,  whereas the time-varying functions in online bilevel optimization make the hypergradient estimation error control more challenging for single-loop updates.

\emph{Online bilevel optimization~} The investigation of OBO is still in the very early stage, and to the best of our knowledge, \cite{tarzanagh2022online} is the only work so far that has studied OBO. 

\section{Online Bilevel Optimization}

% \subsection{Problem Formulation}

Following the same spirit as the online optimization in \cite{hazan2016introduction},  the decisions are made iteratively in OBO without knowing their outcomes at the time of decision-making. 
Let $T$ denote the total number of rounds in OBO.
Define $x_t\in\mathcal{X}\subset \mathbb{R}^{d_1}$ and $f_t: \mathcal{X}\times \mathbb{R}^{d_2}$ as the decision variable and the online function for the outer level problem, respectively. Define $y_t\in \mathbb{R}^{d_2}$ and $g_t\in \mathcal{X}\times \mathbb{R}^{d_2}$ as the decision variable and the online objective function for the inner level problem, respectively. Given the initial values of $(x_1, y_1)$, the general procedure of OBO is described in \cref{alg:obo}.
% \vspace{-1.5cm}
\begin{algorithm}[tb]
\caption{General procedure of OBO}
\label{alg:obo}
    \begin{algorithmic}[1]
    \STATE Initialize decisions $x_1$ and $y_1$
    \FOR{$t=1,...,T$}
            %\STATE Make decisions $x_t$ and $y_t$
        \STATE Get information about functions $f_t$ and $g_t$
        \STATE Update decision $y_{t+1}$ based on $x_t$ and $g_t$
        \STATE Update decision $x_{t+1}$ based on $y_{t+1}$ and $f_t$
    \ENDFOR
    % \vspace{-0.5cm}
    \end{algorithmic}
    % \vspace{-0.5cm}
\end{algorithm}
% \vspace{-5cm}

Let $y_t^*(x)= \arg\min_{y\in\mathbb{R}^{d_2}}g_t(x,y)$ for any $x$.
The OBO framework in \cref{alg:obo} can be interpreted from two different perspectives: (1) (\emph{single-player}) The player makes the decision on $x_t$ without knowing the optimal inner-level decision $y_t^*(x)$. Note,  $y_t$ serves as an estimation of $y_t^*(x)$ from the player's perspective based on her knowledge of function $g_t$; (2) (\emph{two-player}) OBO can also be viewed as a leader ($x_t$) and follower ($y_t$) game, where each player considers an online optimization problem and the leader seeks to play against the optimal decision $y_t^*(x)$ of the follower at each round under limited knowledge of $g_t$.

It is worthwhile noting that OBO is quite different from the single-level online optimization. First, the outer-level objective function with respect to (w.r.t) $x_t$, i.e., $f_t(x_t, y_t^*(x_t))$, is not available to update $x_t$, whereas, in standard single-level online optimization, 
the true loss is revealed immediately after making decisions. Besides, as a composite function of $f_t(x,y)$ and $y_t^*(x)$, $f_t(x, y_t^*(x))$ is \emph{non-convex} in general w.r.t. the outer-level decision variable $x$. Hence, standard regret definitions in online convex optimization \cite{hazan2016introduction} are not directly applicable here.

Motivated by the dynamic local regret defined in online non-convex optimization \cite{aydore2019dynamic}, we consider the following bilevel local regret:
{\small
\begin{align}\label{eq:regret_def}
    BLR_{w}(T)=\sum\nolimits_{t=1}^T\|\nabla F_{t, \eta} (x_t, y_t^*(x_t)) \|^2
\end{align}}%
where
{\small
\begin{align*}
    F_{t,\eta}(x_t, y_t^*(x_t))=\frac{1}{W}\sum\nolimits_{i=1}^{K-1} \eta^i f_{t-i}(x_{t-i}, y_{t-i}^*(x_{t-i})),
\end{align*}}%
and $W=\sum_{i=0}^{K-1} \eta^i$, $\eta\in (0,1)$, and $f_t(\cdot,\cdot)=0$ for $t\leq 0$. In contrast,  the static regret in \cite{tarzanagh2022online} evaluates the objective at time slot $i$ using variable updates at different time slot $j$, which does not properly characterize the online learning performance of the model update for time-varying functions (see Appendix for more discussion). Intuitively, the regret in \cref{eq:regret_def} is defined as a sliding average of the hypergradients w.r.t. the decision variables at the corresponding instant for all rounds in OBO. This indeed approximately computes the exponential average of the outer-level function values $f_t(x_t,y_t^*(x_t))$ at the corresponding decision variables over a sliding window \cite{aydore2019dynamic}.
Larger weights will be assigned to more recent updates.
The objective here is to design efficient OBO algorithms with  sublinear bilevel regret $BLR_{w}(T)$ in $T$, which implies that the  outer-level decision is becoming better and closer to the local optima for the outer-level optimization problem at each round. This gradient-norm based  regret 
shares the same spirit as the first-order  optimality criterion \cite{ghadimi2018approximation,ji2021bilevel}, which is widely used in offline bilevel optimization to characterize the convergence to the local optima.

\section{Algorithm Design}

% In this section, we first propose an efficient algorithm for OBO, and show that a sublinear bilevel local regret can be achieved under mild conditions.

% \subsection{Proposed Algorithm}

It is well known that online gradient descent (OGD) \cite{shalev2012online} has achieved great successes in single-level online optimization. On the other hand, gradient-based methods (e.g., \cite{gould2016differentiating,ghadimi2018approximation,lorraine2020optimizing,liu2020generic,ji2021lower}) have become extremely popular for solving offline bilevel optimization due to their high efficiency.   Thus, we also study the online gradient descent based algorithm to solve the OBO problem. As mentioned earlier, the unique challenges of OBO should be carefully handled in the algorithm design, including 1)  the inaccessibility of the objective function and accurate hypergradients  compared to single-level online optimization and 2) the time-varying functions and limited information compared to offline bilevel optimization. 
To this end, our algorithm includes two major designs, i.e., efficient hypergradient estimation with limited information and  window averaged outer-level decision update.

\textbf{Hypergradient estimation~}
In OBO, the exact hypergradient $\nabla f_t(x_t, y_t^*(x_t))$ w.r.t. $x_t$ can be represented as
{\small
\begin{align}\label{eq:exacthg}
    \nabla f_t(x_t, y_t^*(x_t)) = \nabla_x f_t(x_t, y_t^*(x_t))
    -\nabla_x \nabla_y g_t(x_t, y_t^*(x_t)) v_t^* 
\end{align}}%
where $v_t^*$ solves the linear system $\nabla_y^2 g_t(x_t,y_t^*(x_t))v=\nabla_y f_t(x_t, y_t^*(x_t))$. The  optimal inner-level decision $y_t^*(x_t)$ is generally unavailable in OBO. To estimate the hypergradient $\nabla f_t(x_t, y_t^*(x_t))$, the AID-based approach \cite{domke2012generic, ghadimi2018approximation, ji2021bilevel} for offline bilevel optimization can be leveraged here, which will involve the following steps: (1) given $x_t$, run $N$ steps of gradient descent w.r.t. the inner-level objective function $g_t$ to find a good approximation $y_t^N$ close to $y_t^*(x_t)$; (2) given $y_t^N$, obtain $v_t^Q$ by solving  $\nabla_y^2 g_t(x_t,y_t^N)v=\nabla_y f_t(x_t, y_t^N)$ with $Q_t$ steps of conjugate gradient. The estimated hypergradient is  constructed as 
{\small
\begin{align}\label{eq:estimatehp}
    \widehat{\nabla} f_t(x_t, y_t^N)=&\nabla_x f_t(x_t, y_t^N) 
    -\nabla_x \nabla_y g_t(x_t, y_t^N) v_t^Q.
\end{align}}%
Nevertheless, the $N$ steps gradient descent for estimating $y_t^*(x_t)$ require multiple inquiries about the inner-level function $g_t$, which can be inefficient and infeasible for online applications. For the algorithm being used in more practical scenarios with limited information about $g_t$, we consider the extreme case where $N=1$, i.e.,
{\small
\begin{align}\label{eq:inner_update}
    y_{t+1}= y_t^1=y_t-\alpha \nabla_y g_t(x_t,y_t),
\end{align}}%
where $\alpha$ is the inner-level step size.
This would  lead to an inaccurate estimation of $y_t^*(x_t)$, which can pose critical challenges for making satisfying outer-level decisions in OBO due to the unreliable hypergradient estimation, especially when the objective functions are time-varying.

\textbf{Window averaged decision update~} To deal with the non-convex and time-varying  functions, inspired by time-smoothed gradient descent in online non-convex optimization \cite{hazan2017efficient,aydore2019dynamic,zhuang2020no,hallak2021regret}, we consider a time-smoothed hypergradient descent for updating the outer-level decision variable $x_t$:
{\small
\begin{align}\label{eq:window_update}
    x_{t+1}=\Pi_{\gX}(x_t-\beta \widehat{\nabla} F_{t,\eta}(x_t,y_{t+1}))
\end{align}}%
where $\Pi_{\gX}$ is the projection onto the set $\gX$, $\beta$ is the outer-level step size and
{\small
\begin{align}\label{eq:window_hg}
     \widehat{\nabla} F_{t,\eta}(x_t,y_{t+1}) =\frac{1}{W}\sum_{i=1}^{K-1} \eta^i \widehat{\nabla} f_{t-i}(x_{t-i}, y_{t+1-i}).
\end{align}}%
Here $\widehat{\nabla} f_{t-i}(x_{t-i}, y_{t+1-i})$ is the hypergradient estimation at the round $t-i$, as in \cref{eq:estimatehp}. 
Intuitively, the update of the current $x_t$ is smoothed by the historical hypergradient estimations w.r.t. the decision variables at that time, which is particularly important here for OBO due to the following reasons: (1)
To compute the averaged $\widehat{\nabla} F_{t,\eta}(x_t,y_{t+1})$ at each round $t$, we only store the hypergradient estimation for previous rounds in the memory, i.e., store $\widehat{\nabla} f_{i}(x_{i}, y_{i+1})$ at each round $i$, and estimate the current hypergradient $\widehat{\nabla} f_{t}(x_{t}, y_{t+1})$ using the available information at current round $t$. Compared to OAGD in \cite{tarzanagh2022online},
there is no need to access to the previous outer-level and inner-level objective functions and evaluate the current decisions on those functions, which is clearly more efficient and practical for online applications. (2) Leveraging the historical information in the window to update the current decision is helpful to deal with the inaccurate hypergradient estimation for the current round, especially under mild function variations. This indeed shares the same rationale with using past tasks to facilitate forward knowledge transfer in online meta-learning \cite{finn2019online} and continual learning \cite{lopez2017gradient}, for better decision making in the current task and improving the overall performance in non-stationary environments.

Building upon these two major components, we can have our main OBO algorithm, Single-loop Online Bilevel Optimizer with Window averaging (SOBOW), as summarized in \cref{alg:our}. At the round $t$, we first estimate $y_{t+1}$ as in \cref{eq:inner_update} given $x_t$ and $y_t$, which will be next leveraged to solve the linear system and construct the hypergradient estimation as in \cref{eq:estimatehp}. Based on the historical hypergradient estimations stored in the memory for previous rounds, we next update $x_t$ based on \cref{eq:window_update}.

\begin{algorithm}[tb]
\caption{Single-loop Online Bilevel Optimizer with Window averaging (SOBOW)}
\label{alg:our}
    \begin{algorithmic}[1]
     \STATE Initialize decisions $x_1$, $y_1$, $v^0$
    \FOR{$t=1,...,T$}
%         \STATE Play decisions $x_t$ and $y_t$
         \STATE Get information about functions $f_t$ and $g_t$
        % \State \textcolor{blue}{(\# based on $x_t$ and $g_t$, obtain a better estimation $y_{t+1}$)}
        % \State $y_t^0 \leftarrow y_t$ (\textcolor{red}{start from the previous $y_t$})
        % \For{$k=1,...,N$}
        %     \State $y_t^{k} \leftarrow y_t^{k-1}-\alpha \nabla_y g_t(x_t, y_t^{k-1})$ (\textcolor{red}{$N$ can be equal to 1})
        % \EndFor
         \STATE Update $y_{t+1}$ based on \cref{eq:inner_update}
        \STATE Solve the linear system $\nabla_y^2 g_t(x_t,y_{t+1})v=\nabla_y f_t(x_t, y_{t+1})$ using $Q_t$ steps of conjugate gradient starting from a fixed point $v^0$ with stepsize $\lambda$ to obtain $v_t^Q$
        % \State solve the linear system such that $v_t$ is a good estimation of $v_t^*$, i.e., $\|v_t-v_t^*\|^2 \leq c_2\|y_{t+1}-y_t^*(x_t)\|^2+\epsilon^2$
         \STATE Construct the hypergradient $\widehat{\nabla} f_{t}(x_t,y_{t+1})$ based on \cref{eq:estimatehp} with $y_t^N=y_{t+1}$
         \STATE Store $\widehat{\nabla} f_{t}(x_t,y_{t+1})$ in the memory
         \STATE Update
        $x_{t+1}$ based on \cref{eq:window_update}
    \ENDFOR
    \end{algorithmic}
\end{algorithm}

\vspace{-0.2cm}
\section{Theoretical Analysis}

In this section, we provide the theoretical analysis of the regret bound for SOBOW. 

\subsection{Technical Assumptions}
Let $z=(x,y)$. Before the regret analysis, we first make the following assumptions.
\begin{assumption}\label{assum:convex}
The inner-level function $g_t(x,y)$ is $\mu_g$-strongly convex w.r.t. $y$, and the composite objective function $f_t(x, y_t^*(x))$ is non-convex w.r.t $x$.
\end{assumption}
\begin{assumption}\label{assum:smooth}
    The following conditions hold for objective functions $f_t(z)$ and $g_t(z)$, $\forall t\in [1,T]$: (1) The function $f_t(z)$ is $L_0$-Lipschitz continuous; (2) $\nabla f_t(z)$ and $\nabla g_t(z)$ are $L_1$-Lipschitiz continuous; (3) The high-order derivatives $\nabla_x\nabla_y g_t(z)$ and $\nabla_y^2 g_t(z)$ are $L_2$-Lipschitz continuous.
    % \begin{itemize}
    %     \item The function $f_t$ is $L_0$-Lipschitz continuous, i.e., for any $z$ and $z'$,
    %     \begin{align*}
    %         |f_t(z)-f_t(z')|\leq L_0\|z-z'\|.
    %     \end{align*}
    %     \item $\nabla f_t(z)$ and $\nabla g_t(z)$ are $L_1$-Lipschitiz continuous, i.e., for any $z$ and $z'$,
    %     \begin{align*}
    %         \|\nabla f_t(z)-\nabla f_t(z')\|\leq& L_1\|z-z'\|,\\
    %         \|\nabla g_t(z)-\nabla g_t(z')\|\leq& L_1\|z-z'\|.
    %     \end{align*}
    %     \item The high-order derivatives $\nabla_x\nabla_y g_t(z)$ and $\nabla_y^2 g_t(z)$ are $L_2$-Lipschitz continuous, i.e., for any $z$ and $z'$,
    %     \begin{align*}
    %         \|\nabla_x\nabla_y g_t(z)-\nabla_x\nabla_y g_t(z')\|\leq& L_2\|z-z'\|,\\
    %         \|\nabla_y^2 g_t(z)-\nabla_y^2 g_t(z')\|\leq& L_2\|z-z'\|.
    %     \end{align*}
    % \end{itemize}
\end{assumption}
Note that both \cref{assum:convex} and \cref{assum:smooth} are  standard and widely used in the literature of bilevel optimization, e.g., \cite{pedregosa2016hyperparameter,ghadimi2018approximation, ji2021lower, tarzanagh2022online, hu2022improved}.

\begin{assumption}\label{assum:bound_value}
    For any $t\in[1,T]$, the function $f_t(x, y_t^*(x))$ is bounded, i.e., $|f_t(x, y_t^*(x))|\leq M$ with $M>0$. Besides, the closed convex  set $\gX$ is bounded, i.e., $\|x-x'\|\leq D$ with $D>0$, for any $x$ and $x'$ in $\gX$.
\end{assumption}
\cref{assum:bound_value} on the boundedness of the objection functions is also standard in the literature of non-convex optimization, e.g., \cite{hazan2017efficient,aydore2019dynamic,nazari2021dynamic,tarzanagh2022online}. Moreover, to guarantee the boundedness of the hypergradient estimation error, previous studies (e.g., \cite{ghadimi2018approximation,ji2020convergence,grazzi2020iteration}) in offline bilevel optimization usually assume that the gradient norm $\|\nabla f(z)\|$ is bounded from above for all $z$. In this work, we make a weaker assumption on the feasibility of $\nabla_y f_t(x,y_t^*(x))$, which generally holds since $y_t^*(x)$ is usually assumed to be bounded in bilevel optimization:
\begin{assumption}\label{assum:fea}
    There exists at least one $\hat{x}\in\gX$ such that $\|\nabla_y f_t(\hat{x},y_t^*(\hat{x}))\|\leq \rho$ where $\rho>0$ is some constant.
\end{assumption}

\subsection{Theoretical Results}

{\bf Technical challenges in analysis:} To analyze the regret performance of SOBOW, several key and unique technical challenges need to be addressed, compared to offline bilevel optimization and OAGD in \cite{tarzanagh2022online}: (1) unlike the multi-step update of inner-level variable $y_t$ in OAGD, the single-step update in SOBOW will lead to an inaccurate estimation of $y_t^*(x_t)$ and consequently a large estimation error for the hypergradient $\nabla f_t(x_t,y_t^*(x_t))$; this problem can be addressed in offline bilevel optimization by controlling the gap $\|y_t^*(x_t)-y_{t+1}^*(x_{t+1})\|^2$, which depends only on $\|x_t-x_{t+1}\|^2$ for static inner-level functions, but this technique cannot be  applied here due to the time-varying $g_t$;
(2) the function variations in OBO can blow up the impact of the hypergradient estimation error if not handled appropriately, whereas offline bilevel optimization does not have this issue due to the static functions therein; 
(3) a new three-level analysis is required to understand the involved couplings among the estimation errors about $v_t$, $y_t$ and $x_t$ in online learning.

Towards this end, the very first step is to understand the estimation error of the optimal solution $v_t^*$ to the linear system $\nabla_y^2 g_t(x_t,y_t^*(x_t))v=\nabla_y f_t(x_t, y_t^*(x_t))$. Here $v_t^*=(\nabla_y^2 g_t(x_t, y_t^*(x_t)))^{-1}\nabla_y f_t(x_t,y_t^*(x_t))$. We have the following lemma about $\|v_t^Q-v_t^*\|^2$, where $v_t^Q$ is obtained by solving $\nabla_y^2 g_t(x_t,y_{t+1})v=\nabla_y f_t(x_t, y_{t+1})$ using $Q_t$ steps of conjugate gradient.
\begin{lemma}\label{lem:assm45}
    Suppose Assumptions \ref{assum:convex}-\ref{assum:fea} hold, $\lambda\leq\frac{1}{L_1}$,  $\alpha\leq\frac{1}{L_1}$ and $Q_{t+1}-Q_t\geq \frac{\log(1-\frac{\alpha\mu_g}{2})}{2\log(1-\lambda\mu_g)}$. We can have that
    {\small
    \begin{align*}
        \|v_t^Q-v_t^*\|^2\leq c_2\|y_{t+1}-y_t^*(x_t)\|^2+\epsilon_t^2
    \end{align*}}%
    where $c_2>0$ is some constant  and the error $\epsilon_t^2$ decays with $t$, i.e., $\epsilon^2_{t+1}\leq (1-\alpha\mu_g/2)\epsilon^2_t$. 
\end{lemma}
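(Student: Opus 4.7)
\textbf{Proof plan for Lemma~\ref{lem:assm45}.} The strategy is to split the error through an intermediate linear system and then control each piece separately. Let $u_t^*$ denote the exact solution of the \emph{perturbed} system $\nabla_y^2 g_t(x_t,y_{t+1})v=\nabla_y f_t(x_t,y_{t+1})$ that the conjugate gradient (CG) routine actually attacks at round $t$. Using $\|a+b\|^2\le 2\|a\|^2+2\|b\|^2$,
\[
\|v_t^Q-v_t^*\|^2\le 2\|v_t^Q-u_t^*\|^2 + 2\|u_t^*-v_t^*\|^2,
\]
where the first summand is the CG inner optimization error on a fixed strongly convex quadratic, and the second summand is a sensitivity error coming from having evaluated the system at $y_{t+1}$ instead of at $y_t^*(x_t)$. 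I will match these two sources, respectively, to the $\epsilon_t^2$ term and to the $c_2\|y_{t+1}-y_t^*(x_t)\|^2$ term in the statement.

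For the perturbation piece, a direct application of the identity $A^{-1}-B^{-1}=A^{-1}(B-A)B^{-1}$ to $u_t^*-v_t^*$ gives
\[
\|u_t^*-v_t^*\|\le \tfrac{1}{\mu_g}\bigl(L_1 + L_2\|v_t^*\|\bigr)\|y_{t+1}-y_t^*(x_t)\|,
\]
by combining $\mu_g$-strong convexity of $g_t$ (so $\|(\nabla_y^2 g_t)^{-1}\|\le 1/\mu_g$), $L_1$-Lipschitzness of $\nabla_y f_t$ in $y$, and $L_2$-Lipschitzness of $\nabla_y^2 g_t$ in $y$. The factor $\|v_t^*\|$ is in turn bounded uniformly in $t$ by $L_0/\mu_g$, using $\|\nabla_y f_t\|\le L_0$ from the $L_0$-Lipschitzness of $f_t$ in Assumption~\ref{assum:smooth}; squaring yields $\|u_t^*-v_t^*\|^2\le c_2\|y_{t+1}-y_t^*(x_t)\|^2$ with an explicit $c_2=2(L_1/\mu_g+L_0L_2/\mu_g^2)^2$.

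For the CG piece, the stepsize condition $\lambda\le 1/L_1$ combined with $\nabla_y^2 g_t\succeq\mu_g I$ gives the standard linear contraction $\|v_t^Q-u_t^*\|^2\le(1-\lambda\mu_g)^{2Q_t}\|v^0-u_t^*\|^2$ for gradient-style iterations on the strongly convex quadratic. Since $v^0$ is a fixed initialization and $\|u_t^*\|\le L_0/\mu_g$ uniformly in $t$ by the same argument as above, $\sup_t\|v^0-u_t^*\|^2$ is a finite constant $C^2$. Defining $\epsilon_t^2:=2C^2(1-\lambda\mu_g)^{2Q_t}$, the claimed recursion $\epsilon_{t+1}^2\le (1-\alpha\mu_g/2)\epsilon_t^2$ reduces to $(1-\lambda\mu_g)^{2(Q_{t+1}-Q_t)}\le 1-\alpha\mu_g/2$. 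Taking logarithms and dividing by the \emph{negative} quantity $\log(1-\lambda\mu_g)$ flips the inequality and produces exactly the hypothesis $Q_{t+1}-Q_t\ge \log(1-\alpha\mu_g/2)/[2\log(1-\lambda\mu_g)]$ of the lemma.

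The step I expect to require the most care is the uniform-in-$t$ control of $\|u_t^*\|$ (equivalently, of $\|v_t^*\|$), because $v^0$ is a single fixed initialization used across \emph{every} round while the linear-system data $(\nabla_y^2 g_t,\nabla_y f_t)$ drifts both with the online iterates $(x_t,y_{t+1})$ and with the time-varying $(f_t,g_t)$ themselves. The argument leans crucially on the fact that Assumption~\ref{assum:smooth} supplies a \emph{global} bound $\|\nabla_y f_t\|\le L_0$ rather than only the pointwise bound from Assumption~\ref{assum:fea}; without this, the warm-start term $\|v^0-u_t^*\|^2$ could in principle grow with $t$ and destroy the purely geometric schedule of $\epsilon_t^2$ that makes the downstream regret analysis close.
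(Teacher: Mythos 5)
Your proposal is correct under the stated assumptions and reaches the same bound, but it takes a genuinely different route from the paper's proof. The paper never introduces the intermediate solution $u_t^*$: it invokes Lemma 1 of \cite{ji2022will} to bound $\|v_t^Q-v_t^*\|$ directly by $C_Q\|y_{t+1}-y_t^*(x_t)\|+(1-\lambda\mu_g)^{Q_t}\|v^0-v_t^*\|$ and then applies Young's inequality, whereas your two-term split through the perturbed system's exact solution, with the resolvent identity for the sensitivity part and the $(1-\lambda\mu_g)^{Q_t}$ contraction for the solver part, reproduces the same structure in a more elementary, self-contained way; your constants $c_2$ and $\epsilon_t^2$ differ from the paper's only in their explicit values (immaterial, since the lemma asks only for some $c_2>0$ and geometric decay), and your derivation of the condition on $Q_{t+1}-Q_t$ is identical to what the paper leaves implicit. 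The substantive difference is how $\|v_t^*\|$ (and your $\|u_t^*\|$) is made uniformly bounded: you use the global bound $\|\nabla_y f_t\|\le L_0$ implied by the $L_0$-Lipschitzness in Assumption~\ref{assum:smooth}, while the paper deliberately avoids any global gradient bound --- its Lemma~\ref{lem:bound_v} bounds $\|\nabla_y f_t(x_t,y_t^*(x_t))\|$ via the feasibility Assumption~\ref{assum:fea}, the $\tfrac{L_1}{\mu_g}$-Lipschitzness of $y_t^*(\cdot)$, and the diameter $D$ of $\gX$ from Assumption~\ref{assum:bound_value}, yielding $M_v=(\rho\mu_g+DL_1^2+DL_1\mu_g)/\mu_g^2$, and its warm-start term is $\|v^0-v_t^*\|$, so only $\|v_t^*\|$ needs controlling. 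Hence your closing claim that the argument ``leans crucially'' on the global $L_0$ bound overstates the point: Assumptions~\ref{assum:bound_value} and \ref{assum:fea} already suffice (this is exactly why the paper introduces Assumption~\ref{assum:fea} as a weaker substitute for global gradient boundedness), and if you wished to respect that spirit you could either contract toward $v_t^*$ as the paper does, or bound $\|\nabla_y f_t(x_t,y_{t+1})\|\le\|\nabla_y f_t(x_t,y_t^*(x_t))\|+L_1\|y_{t+1}-y_t^*(x_t)\|$ and absorb the extra piece into the $c_2\|y_{t+1}-y_t^*(x_t)\|^2$ term.
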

\cref{lem:assm45} characterizes the estimation error $\|v_t^Q-v_t^*\|^2$ in a neat way, by constructing an upper bound with the estimation error of the inner-level optimal decision $y_t^*(x_t)$, i.e., $\|y_{t+1}-y_t^*(x_t)\|^2$, and a decaying error term $\epsilon_t^2$. 
The way of controlling $\|v_t^Q-v_t^*\|^2$ here is particularly important, which not only clarifies the coupling between $v_t$ and $y_t$ but also helps to control the hypergradient estimation error.  
Note that solving the linear system with a larger $Q_t$ does not require more information about the inner-level function, and the introduced computation cost  can be negligible, because the conjugate gradient only involves Hessian-vector product which can be efficiently computed.

% For any $t\in [1,T]$ we first define
% \begin{align*}
%     F_{t,\eta}(x_{t+1}, y_{t}^*(x_{t+1}))=\frac{1}{W}\sum_{i=0}^{w-1} \eta^i f_{t-i}(x_{t+1-i}, y_{t-i}^*(x_{t+1-i})).
% \end{align*}
% Based on \cref{assum:smooth}, we can show the smoothness of $F_{t,\eta}(x, y_{t}^*(x))$ w.r.t. $x$.
% \begin{lemma}\label{lem:smooth_F}
% Suppose \cref{assum:smooth} holds. Then the following holds for function $F_{t,\eta}(x_{t+1}, y_{t}^*(x_{t+1}))$:
% \begin{align*}
%     &F_{t,\eta}(x_{t+1}, y_{t}^*(x_{t+1}))-F_{t,\eta}(x_{t}, y_{t}^*(x_{t}))\\
%     \leq& \langle \nabla F_{t,\eta}(x_{t}, y_{t}^*(x_{t})), x_{t+1}-x_t \rangle + \frac{L_f}{2}\|x_{t+1}-x_t\|^2.
% \end{align*}
% \end{lemma}

Next we seek to bound the hypergradient estimation error $\|\nabla f_t(x_t,y_t^*(x_t)) - \widehat{\nabla} f_t(x_t,y_{t+1})\|^2$ at the round $t$. Intuitively, the hypergradient estimation error depends on both $\|y_{t+1}-y_t^*(x_t)\|^2$ and $\|v_t^Q-v_t^*\|^2$.
Building upon \cref{lem:assm45}, this dependence can be shifted to the joint error of $\|y_{t+1}-y_t^*(x_t)\|^2$ and $\epsilon_t^2$, which contains iteratively decreasing components after careful manipulations. 
Specifically,
let $G_1=1+c_2+\frac{L_2^2(\rho\mu_g+DL_1^2+DL_1\mu_g)^2}{L_1^2 \mu_g^4}$ and $G_2=2G_1(1+\frac{2}{\alpha\mu_g})(1-\alpha\mu_g)$. We have the following theorem to characterize the hypergradient estimation error.
\begin{theorem}\label{lem:hg_error}
    Suppose that Assumptions \ref{assum:convex}-\ref{assum:fea}  hold, $\lambda\leq\frac{1}{L_1}$,  $\alpha\leq\frac{1}{L_1}$ and $Q_{t}-Q_{t-1}\geq \frac{\log(1-\frac{\alpha\mu_g}{2})}{2\log(1-\lambda\mu_g)}$. For $t\in [2,T]$, we can bound the hypergradient estimation error as follows:
    {\small
    \begin{align*}
    \|\nabla f_t(x_t,y_t^*(x_t)) &- \widehat{\nabla} f_t(x_t,y_{t+1})\|^2
    \leq 3L_1^2\bigg\{\frac{2L_1^2 G_2}{\mu_g^2} \sum_{j=0}^{t-2}\left(1-\frac{\alpha\mu_g}{2}\right)^j\|x_{t-1-j}-x_{t-j}\|^2\nonumber\\
    &+ G_2\sum_{j=0}^{t-2} \left(1-\frac{\alpha\mu_g}{2}\right)^j \|y_{t-1-j}^*(x_{t-1-j})-y_{t-j}^*(x_{t-1-j})\|^2+ \left(1-\frac{\alpha\mu_g}{2}\right)^{t-1} \delta_1\bigg\}
\end{align*}
}%
where $\delta_1=G_1\|y_{2}-y_1^*(x_1)\|^2+\epsilon_{1}^2$.
\vspace{-0.15cm}
\end{theorem}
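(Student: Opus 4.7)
The plan is to reduce the hypergradient estimation error to a single scalar quantity $a_t := \|y_{t+1}-y_t^*(x_t)\|^2$, establish a one-step recursion for $a_t$, and unroll it. The starting point is the algebraic identity
\begin{align*}
\nabla f_t(x_t,y_t^*(x_t)) - \widehat{\nabla} f_t(x_t,y_{t+1})
&= \bigl[\nabla_x f_t(x_t,y_t^*(x_t)) - \nabla_x f_t(x_t,y_{t+1})\bigr]\\
&\quad- \bigl[\nabla_x\nabla_y g_t(x_t,y_t^*(x_t)) - \nabla_x\nabla_y g_t(x_t,y_{t+1})\bigr]\, v_t^*\\
&\quad- \nabla_x\nabla_y g_t(x_t,y_{t+1})\,(v_t^* - v_t^Q).
\end{align*}
Using $\|a-b-c\|^2 \leq 3(\|a\|^2+\|b\|^2+\|c\|^2)$, the Lipschitz conditions in \cref{assum:smooth} bound the first piece by $3L_1^2 a_t$ and the third by $3L_1^2 \|v_t^*-v_t^Q\|^2$. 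For the middle piece, $L_2$-Lipschitzness of $\nabla_x\nabla_y g_t$ yields $3L_2^2\|v_t^*\|^2 a_t$. Strong convexity gives $\|v_t^*\|\leq \mu_g^{-1}\|\nabla_y f_t(x_t,y_t^*(x_t))\|$, and the right-hand side is at most $\rho+L_1 D+L_1^2 D/\mu_g$ by \cref{assum:fea}, the $L_1$-Lipschitzness of $\nabla_y f_t$, the implicit-function-theorem Lipschitz bound $\|\nabla y_t^*\|\leq L_1/\mu_g$, and the diameter bound in \cref{assum:bound_value}. Substituting together with \cref{lem:assm45} collapses the three pieces into
\begin{align*}
\|\nabla f_t(x_t,y_t^*(x_t)) - \widehat{\nabla} f_t(x_t,y_{t+1})\|^2 \leq 3L_1^2 G_1\, a_t + 3L_1^2 \epsilon_t^2,
\end{align*}
isolating exactly the two quantities that must be controlled next.

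\textbf{Recursion for $a_t$.} By $\mu_g$-strong convexity and $L_1$-smoothness of $g_t(x_t,\cdot)$ with $\alpha\leq 1/L_1$, the single-step inner update contracts: $a_t \leq (1-\alpha\mu_g)\|y_t - y_t^*(x_t)\|^2$. I would then insert the anchor $y_{t-1}^*(x_{t-1})$ and apply Young's inequality with parameter $\alpha\mu_g/2$,
\begin{align*}
\|y_t - y_t^*(x_t)\|^2 \leq \bigl(1+\tfrac{\alpha\mu_g}{2}\bigr)\|y_t - y_{t-1}^*(x_{t-1})\|^2 + \bigl(1+\tfrac{2}{\alpha\mu_g}\bigr)\|y_{t-1}^*(x_{t-1}) - y_t^*(x_t)\|^2,
\end{align*}
and use $(1-\alpha\mu_g)(1+\alpha\mu_g/2) \leq 1-\alpha\mu_g/2$ to obtain
\begin{align*}
a_t \leq \bigl(1-\tfrac{\alpha\mu_g}{2}\bigr) a_{t-1} + (1-\alpha\mu_g)\bigl(1+\tfrac{2}{\alpha\mu_g}\bigr)\|y_{t-1}^*(x_{t-1}) - y_t^*(x_t)\|^2.
\end{align*}
The crux of the online analysis is then to decouple the last term: I would split it through the intermediate point $y_t^*(x_{t-1})$, producing the function-variation piece $\|y_{t-1}^*(x_{t-1})-y_t^*(x_{t-1})\|^2$ (two functions at the same $x$) and the iterate-drift piece $\|y_t^*(x_{t-1})-y_t^*(x_t)\|^2\leq (L_1/\mu_g)^2\|x_{t-1}-x_t\|^2$ (same function at two $x$'s). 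After combining with the $G_1$ factor from the previous paragraph, the inhomogeneous coefficient becomes exactly $G_2$, with the $x$-term carrying the additional $L_1^2/\mu_g^2$ as stated.

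\textbf{Unrolling and the main obstacle.} Iterating the recursion back to $a_1 = \|y_2-y_1^*(x_1)\|^2$ produces the two geometric sums of the theorem; combined with $\epsilon_t^2 \leq (1-\alpha\mu_g/2)^{t-1}\epsilon_1^2$ from \cref{lem:assm45} and the grouping $\delta_1 = G_1\|y_2-y_1^*(x_1)\|^2 + \epsilon_1^2$, the stated bound follows. The principal obstacle is the decoupling step above: the offline-bilevel trick of bounding $\|y_{t-1}^*(x_{t-1})-y_t^*(x_t)\|^2$ directly by $\|x_{t-1}-x_t\|^2$ fails in the online setting because the two minima correspond to different functions, so keeping the term coupled would let the function variation blow up the error. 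A secondary subtlety is rate matching: the condition $Q_t - Q_{t-1} \geq \log(1-\alpha\mu_g/2)/(2\log(1-\lambda\mu_g))$ assumed in \cref{lem:assm45} is precisely what makes the conjugate-gradient error $\epsilon_t^2$ contract at the same geometric rate $1-\alpha\mu_g/2$ as the $a_t$-recursion, so that all three estimation errors (over $v_t$, $y_t$, $x_t$) compress into a single geometric sum rather than a sum of incompatible rates.
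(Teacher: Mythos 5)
Your proposal is correct and follows essentially the same route as the paper's proof: the same three-term decomposition of the hypergradient error with the $M_v$-bound on $v_t^*$ and \cref{lem:assm45} collapsing everything into $3L_1^2(G_1\|y_{t+1}-y_t^*(x_t)\|^2+\epsilon_t^2)$, the same one-step contraction plus Young's inequality with parameter $\alpha\mu_g/2$ and the split through $y_t^*(x_{t-1})$, and the same geometric unrolling (the paper packages $G_1 a_t+\epsilon_t^2$ as $\delta_t$ before unrolling, which is only cosmetically different). The factor-of-2 slack between your recursion's $x$-coefficient and the stated $2L_1^2G_2/\mu_g^2$ is also present in the paper and is harmless.
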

As shown in \cref{lem:hg_error}, the upper bound of the hypergradient estimation error includes three terms: (1) The first term decays with $t$, which captures the iteratively decreasing component in the joint error of  $\|y_{t+1}-y_t^*(x_t)\|^2$ and $\epsilon_t^2$; (2) The second term characterizes the dependence on the variation of the outer-level decision between adjacent rounds in the history; (3) The third term characterizes the dependence on the variation of the optimal inner-level decision between adjacent rounds. 

To control the hypergradient estimation error as in \cref{lem:hg_error}, the key idea is to decouple the source of the estimation error $\|y_{t+1}-y_t^*(x_t)\|^2$ at the current round into three different components, i.e., $\|y_{t}-y_{t-1}^*(x_{t-1})\|^2$ for the previous round, the variation of the out-level decision, and the variation of the optimal inner-level decision. Since the inner-level estimation error is large due to the single-step update, we diminish its impact through a decaying coefficient, which inevitably enlarges the impact of the other two components. The variation of the optimal inner-level decision is due to nature of the OBO problem, which cannot be controlled. One has to impose some regularity constraints on this variation to achieve a sublinear regret, in the same spirit to the regularities on functional variations widely used in the dynamic regret literature (e.g., \cite{besbes2015non,zhao2021improved}). Therefore, the key point now becomes the control of the variation of the out-level decision $\|x_{t-1-j}-x_{t-j}\|^2$, which can be achieved through the window averaged update of $x_t$ in \cref{eq:window_update}. Intuitively, by leveraging the historical information, the window averaged hypergradient in \cref{eq:window_hg} smooths the outer-level decision update, which serves as a better update direction compared to the deviated single-round estimation $\widehat{\nabla} f_t(x_t,y_{t+1})$.  
Before presenting the main result, we first introduce the following definitions to characterize the  variations of the objective function $f_t(\cdot, y_t^*(\cdot))$ and the optimal inner-level decision $y_t^*(\cdot)$ in OBO, respectively:

\vspace{-0.5cm}
{\small
\begin{align*}
    V_{1,T}=\sum_{t=1}^T \sup_{x}[f_{t+1}(x,y_{t+1}^*(x))-f_t(x,y_t^*(x))],~~
    H_{2,T}=\sum_{t=2}^T \sup_{x} \|y_{t-1}^*(x)-y_t^*(x)\|^2.
\end{align*}}
\hspace{-0.12cm}Intuitively, $V_{1,T}$ measures the  overall fluctuations between the adjacent objective functions in all rounds  under the same outer-level decision variable, and $H_{2,T}$ can be regarded as the inner-level path length to capture the variation of the optimal inner-level decisions as in \cite{tarzanagh2022online}. Note that $V_{1,T}$ is a weaker regularity for the functional variation compared to absolute values used in single-level online optimization for dynamic regret  \cite{besbes2015non,zhao2021improved}. When the functions are static, these variations terms are simply 0. We are interested in the case where both $V_{1,T}$ and $H_{2,T}$ are $o(T)$ as in the literature of dynamic regret.

Based on \cref{lem:hg_error}, we can have the following theorem to characterize the regret of  SOBOW.
\begin{theorem}\label{thm:main}
    Suppose that Assumptions \ref{assum:convex}-\ref{assum:fea}  hold. Let $\lambda\leq\frac{1}{L_1}$,  $\alpha\leq\frac{1}{L_1}$, $Q_{t+1}-Q_t\geq \frac{\log(1-\frac{\alpha\mu_g}{2})}{2\log(1-\lambda\mu_g)}$, $\eta\in (1-\frac{\alpha\mu_g}{2}, 1)$, and $\beta\leq\min\{\frac{1}{4L_f}, \frac{\mu_g^2 L_f W (1-\eta)(\eta-1+\alpha\mu_g/2)}{24 L_1^4 G_2\eta}\}$
    where $L_f$ is the smoothness parameter of the function $f_t(\cdot, y_t^*(\cdot))$. Then we can have
    {\small
    \begin{align*}
        BLR_w(T)\leq O\left(\frac{T}{\beta W}+\frac{V_{1,T}}{\beta}+H_{2,T}\right).
    \end{align*}}%
\end{theorem}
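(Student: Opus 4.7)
The plan is to prove a per-round descent inequality for the window-averaged function $F_{t,\eta}$ and telescope it over $t=1,\dots,T$, using \cref{lem:hg_error} to control the hypergradient estimation error. First I would note that, under \cref{assum:convex} and \cref{assum:smooth}, each $f_t(\cdot,y_t^*(\cdot))$ is $L_f$-smooth by a standard implicit-function argument, so the convex combination $F_{t,\eta}$ inherits $L_f$-smoothness in its first argument. Applied to the update $x_{t+1}=\Pi_{\gX}(x_t-\beta\widehat{\nabla}F_{t,\eta}(x_t,y_{t+1}))$, together with the decomposition $\langle\nabla F_{t,\eta},\widehat{\nabla}F_{t,\eta}\rangle=\|\nabla F_{t,\eta}\|^2-\langle\nabla F_{t,\eta},\nabla F_{t,\eta}-\widehat{\nabla}F_{t,\eta}\rangle$ and Young's inequality, smoothness yields a one-step bound of the form
\begin{align*}
F_{t,\eta}(x_{t+1},\cdot)\le F_{t,\eta}(x_t,\cdot)-\tfrac{\beta}{2}\|\nabla F_{t,\eta}(x_t,y_t^*(x_t))\|^2+\tfrac{\beta}{2}\|\nabla F_{t,\eta}-\widehat{\nabla}F_{t,\eta}\|^2,
\end{align*}
provided $\beta\le 1/(4L_f)$ so the $(L_f/2)\|x_{t+1}-x_t\|^2$ term is swallowed.

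Next I would convert $F_{t,\eta}(x_{t+1})$ into $F_{t+1,\eta}(x_{t+1})$ so that the LHS telescopes cleanly. The difference $F_{t+1,\eta}(x)-F_{t,\eta}(x)$ is the result of sliding the window forward by one step; summed over $t$ it is bounded by $O(V_{1,T}/W+TM/W)$ using \cref{assum:bound_value} and the definition of $V_{1,T}$. For the hypergradient estimation error, Jensen on the window expansion reduces $\|\nabla F_{t,\eta}-\widehat{\nabla}F_{t,\eta}\|^2$ to an $\eta$-weighted average of single-round errors, to each of which I apply \cref{lem:hg_error}. The decaying initial residuals $(1-\alpha\mu_g/2)^{t-1}\delta_1$ from \cref{lem:hg_error} sum geometrically to $O(1)$ after $\eta^i/W$ weighting, while the inner-variation terms $\|y_{s-1}^*(x_{s-1})-y_s^*(x_{s-1})\|^2$, after a supremum, contribute $O(H_{2,T})$.

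The main obstacle is the outer-variation family $\|x_{s-1}-x_s\|^2\le 2\beta^2\|\nabla F_{s-1,\eta}\|^2+2\beta^2\|\nabla F_{s-1,\eta}-\widehat{\nabla}F_{s-1,\eta}\|^2$, which feeds back into the very quantity I am bounding. My plan is to exchange the order of summation in the triple sum $\sum_{t,i,j}(\eta^i/W)(1-\alpha\mu_g/2)^j\|x_{t-i-1-j}-x_{t-i-j}\|^2$; the condition $\eta\in(1-\alpha\mu_g/2,1)$ makes both geometric series converge and collapses the triple sum into $\sum_s C\|x_{s-1}-x_s\|^2$ with $C\le\eta/\bigl((1-\eta)(\eta-1+\alpha\mu_g/2)\bigr)$. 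Substituting the $\beta^2\|\nabla F_{s-1,\eta}\|^2$ upper bound and iterating once more on the residual error term produces a strict contraction exactly when $\beta\le \mu_g^2 L_f W(1-\eta)(\eta-1+\alpha\mu_g/2)/(24 L_1^4 G_2\eta)$, which is the step-size condition in the theorem. After absorbing this feed-back into the $-(\beta/2)\|\nabla F_{t,\eta}\|^2$ progress term, telescoping the one-step inequality and dividing through by $\beta/2$ yields
\begin{align*}
BLR_w(T)=\sum_{t=1}^T\|\nabla F_{t,\eta}(x_t,y_t^*(x_t))\|^2\le O\bigl(T/(\beta W)+V_{1,T}/\beta+H_{2,T}\bigr),
\end{align*}
which is the claimed bound.
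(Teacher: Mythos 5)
Your proposal follows essentially the same route as the paper's proof: an $L_f$-smoothness descent inequality for the window-averaged objective (Lemmas \ref{lem:smooth_f}--\ref{lem:smooth_F}), a split into the function-value telescoping/variation term and the window-averaged hypergradient error, control of the latter via \cref{lem:hg_error} with the geometric collapse enabled by $\eta\in(1-\frac{\alpha\mu_g}{2},1)$, and resolution of the self-bounding $\|x_{s-1}-x_s\|^2$ feedback through exactly the stated step-size conditions. One bookkeeping note: since the window weights $\eta^i/W$ sum to one, the slid-window/function-variation term accumulates to $2MT/W+V_{1,T}$ rather than your $O(V_{1,T}/W+TM/W)$, which does not affect the final $O\bigl(\tfrac{T}{\beta W}+\tfrac{V_{1,T}}{\beta}+H_{2,T}\bigr)$ bound.
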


The value of $L_f$ can be found in the proof in Appendix. Note that the hypergradient estimation at the current round depends on all previous outer-level decisions as shown in \cref{lem:hg_error}. While these decisions may not be good at the early stage in OBO, choosing  $\eta\in (1-\frac{\alpha\mu_g}{2}, 1)$ would diminish their 
impact on the local regret.   
When the variations $V_{1,T}$ and $H_{2,T}$ are both $o(T)$, a sublinear bilevel local regret can be achieved for an appropriately selected window, e.g., $W=o(T)$ when $\eta=1-h(T)$, where $h(T)\rightarrow 0$ as $T\rightarrow \infty$. Note that the value $\beta$ does not change substantially since $W(1-\eta)$ converges to $1$. Particularly, when $\eta=1-o(\frac{1}{T})$, $W=\omega(T)$. In this case, we can have the smallest regret $O\left(\frac{V_{1,T}}{\beta}+H_{2,T}\right)$ that only depends on the function variations in OBO.

\section{Experiments}

% As the marriage between two powerful tools, i.e., online optimization and bilevel optimization, OBO has inherited their powerful modeling capabilities, which can be potentially applied to machine learning problems across multiple domains. 
In this section, we conduct experiments in multiple domains to corroborate the utility of the OBO framework and
the effectiveness of SOBOW. 

\begin{figure*}[htb!]
\vspace{-0.5cm}
\centering
    \subfigure[static setup]{\includegraphics[width=0.235\textwidth]{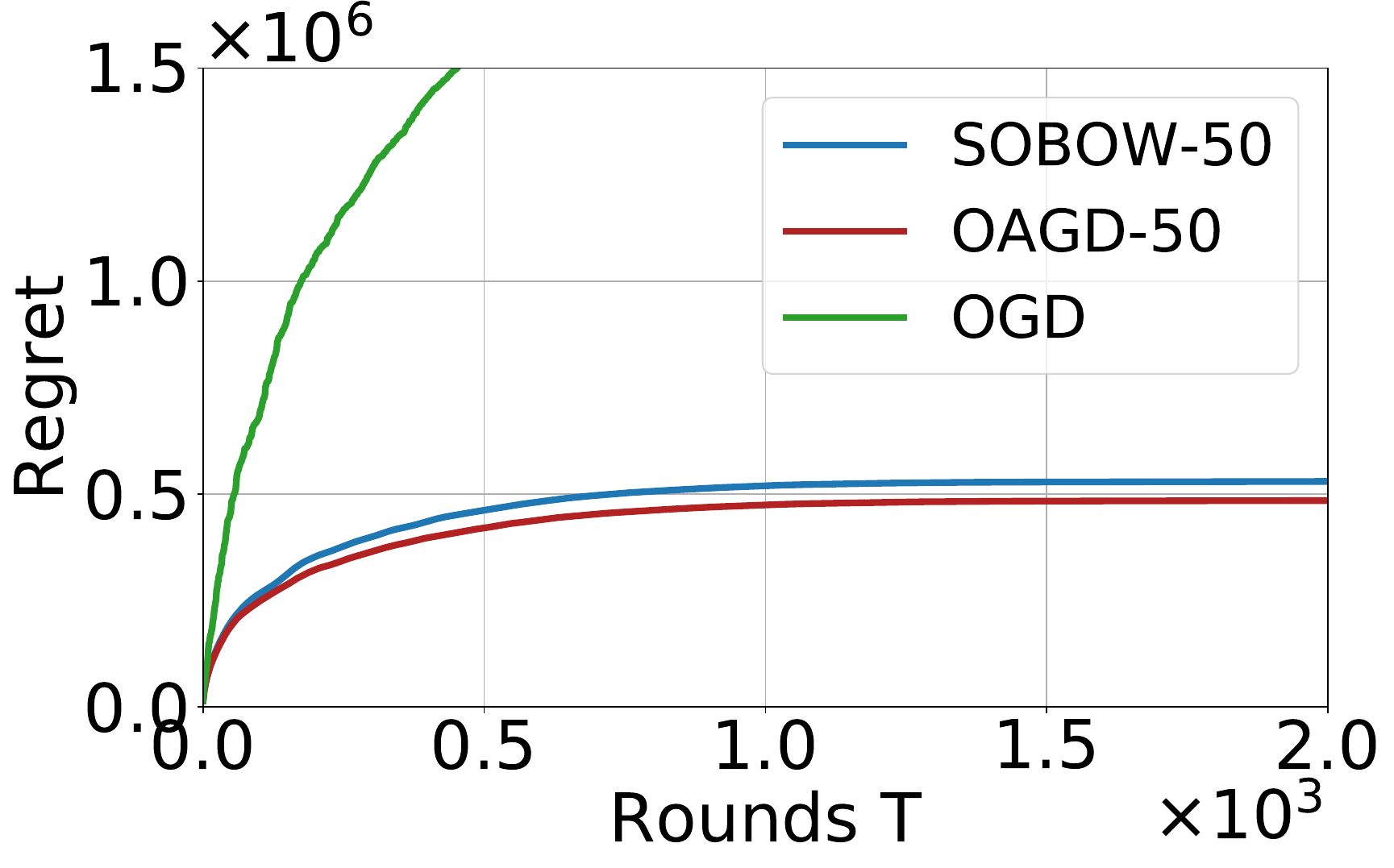}
    \label{fig:static}
    }
    \subfigure[dynamic setup]{\includegraphics[width=0.22\textwidth]{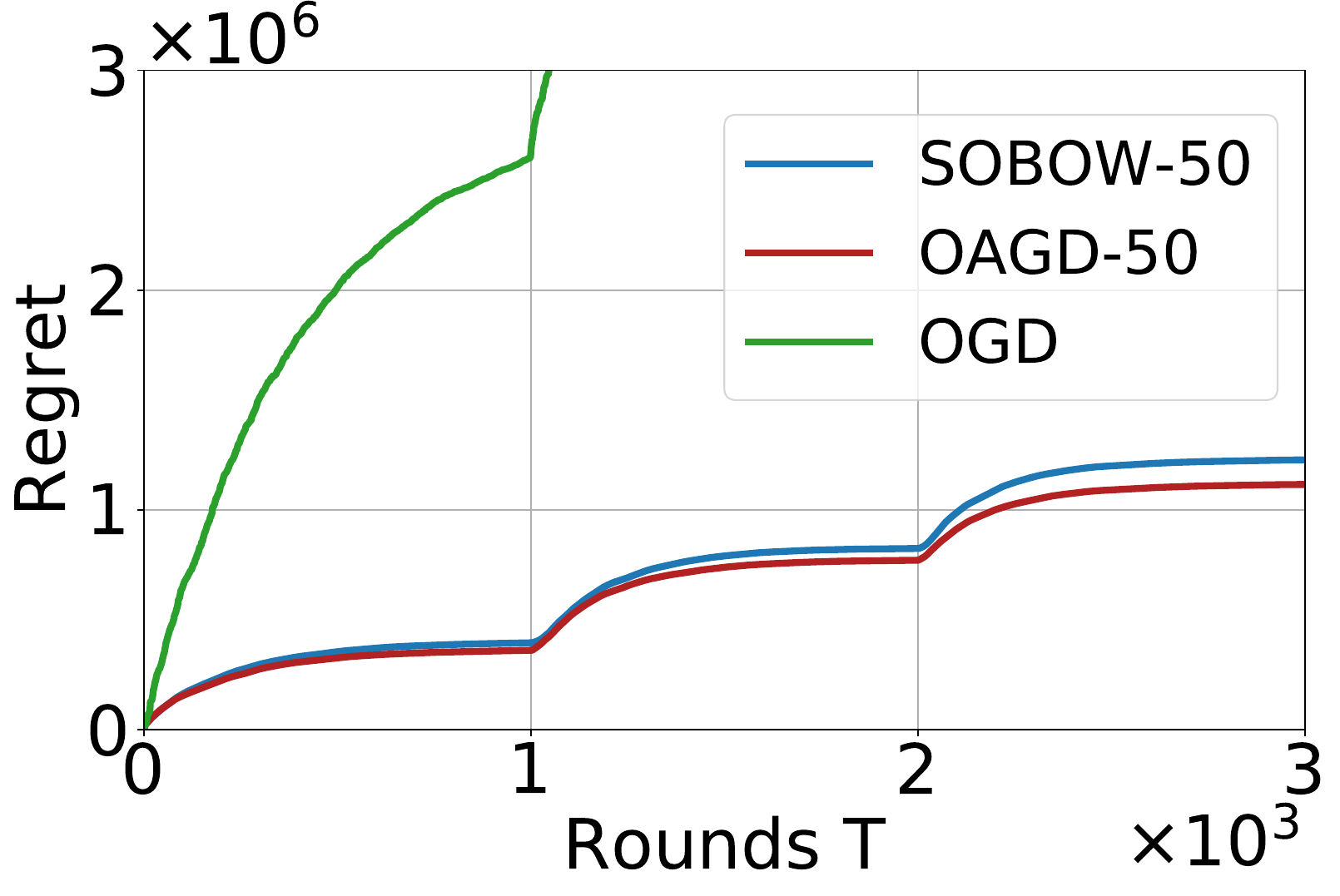} 
    \label{fig:dyna}
    }
    \subfigure[impact of $\eta$]{\includegraphics[width=0.235\textwidth]{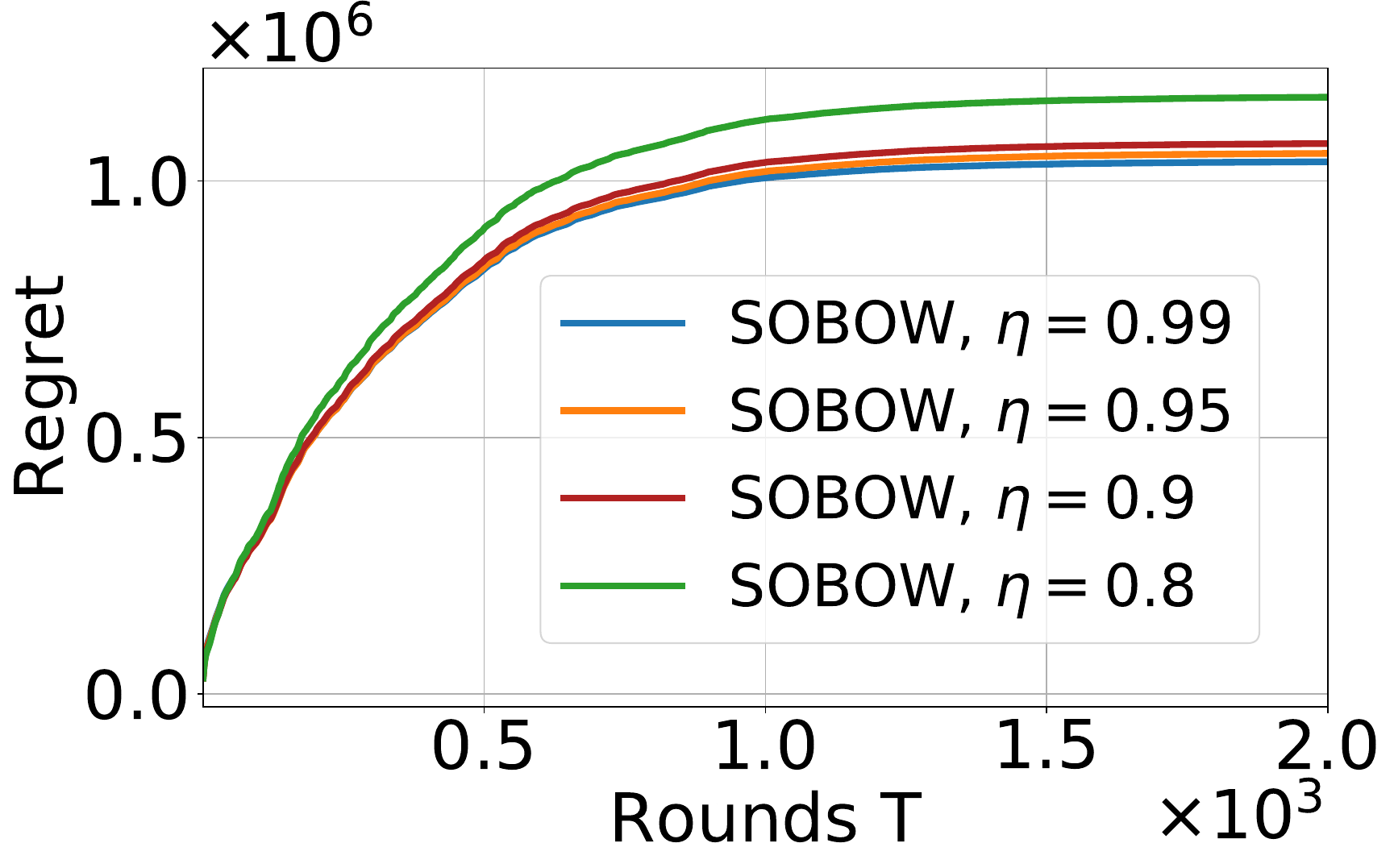}
    \label{fig:eta}}
    \subfigure[impact of $N$]{\includegraphics[width=0.235\textwidth]{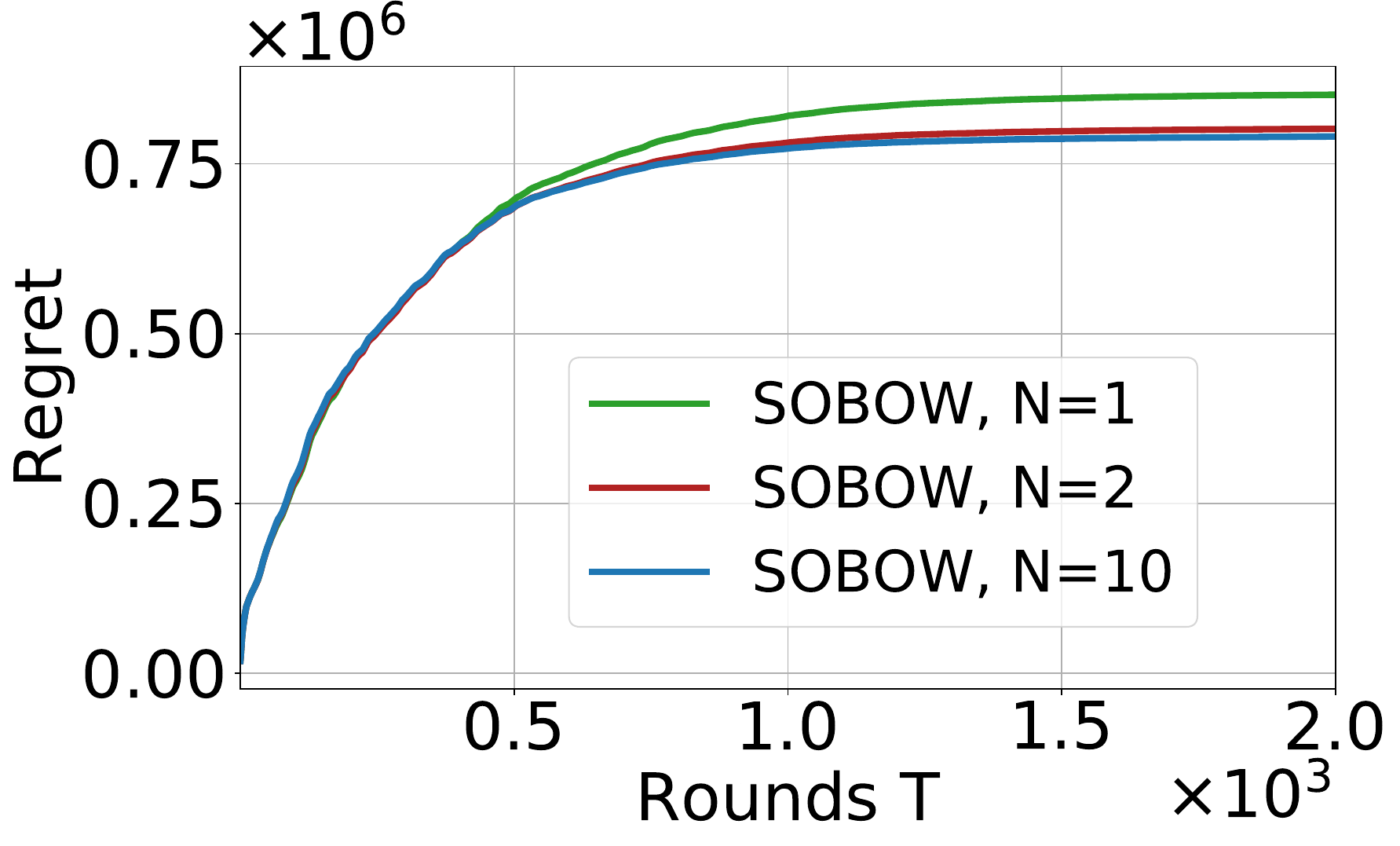}
    \label{fig:n}}
\caption{Evaluation for online HR. As shown in
subfigures (a) and (b), SOBOW performs similarly to OAGD and significantly outperforms OGD in both static and dynamic setups.
 % despite having nearly the same running time as OGD (\Cref{tab:hrt}). OAGD is significantly slower with a time that is linear w.r.t. the window size $W$. 
Subfigure (c) shows the performance of  SOBOW under different values of the averaging parameter $\eta$ for online HR. Better performance is achieved as $\eta \rightarrow 1$. Subfigure (d) shows the performance of SOBOW under different values of inner steps $N$ when the data stream contains two data points (one in $X_t^g$ and one in $X_t^f$). The performance saturates at $N=2$.}
%Left plot: accuracy on outer data v.s. running. Middle plot: outer loss v.s. running time. Right plot: Convergence rate 
\label{fig:ohr}
\vspace{-2mm}
\end{figure*}

% \begin{figure*}[htb!]
% \centering
% \begin{tabular}{cc}
% \includegraphics[width=0.4\textwidth]{plots/diff_u.pdf}
% &\includegraphics[width=0.4\textwidth]{plots/diff_n.pdf} 
% \end{tabular}
% \caption{\textbf{Left plot:} Performance of our algorithm SOBOW for different values of the averaging parameter $\eta$ on online HR problem. Better performance is achieved as $\eta \longrightarrow 1$. \textbf{Right plot:} Sensitivity of our algorithm with respect to the number of inner steps $N$ when the data stream contains two data points (one in $X_t^g$ and one in $X_t^f$). The performance saturates at $N=2$.}
% %Left plot: accuracy on outer data v.s. running. Middle plot: outer loss v.s. running time. Right plot: Convergence rate 
% \label{fig:ohr}
% % \vspace{-2mm}
% \end{figure*}

Specifically, we compare our algorithm SOBOW with the following baseline methods: (1) {\bf OAGD} \cite{tarzanagh2022online}, which is the only method for OBO in the literature; (2) {\bf OGD}, a natural method which updates the outer-level decision by using the current hypergradient estimation only without any window averaging. Intuitively, \emph{OGD is not only a special case of OAGD when the information of previous functions is not available}, but also a direct application of offline bilevel optimization, e.g., AID-based method \cite{ji2021bilevel}. We also denote SOBOW-$K$/OAGD-$K$ as SOBOW and OAGD with window size $K$, respectively. And we evaluate the regret using the definition in \cref{eq:regret_def}. We also compare the performance using the regret in \cite{tarzanagh2022online} in Appendix where similar results can be observed.
% \vspace{-0.3cm}
% \begin{table}[h]
% 	\renewcommand\arraystretch{1.0}
% 	\centering
%  \small
%     \caption{Total running time in seconds for 5000 steps.}
%     %The running time of OAGD is almost linear in  the window size $K$.}
%     \vspace{0.1cm}
% 	\begin{tabular}{c|c|c|c}
% 		\hline
% 		 Window size & SOBOW & OAGD & OGD  \\
% 		\hline
% 		\hline
% 		% $K=10$ & $8$ & $48$ & $7$\\
% 		% \hline
% 		$K=50$  & $11$ & $228$ & $7$ \\ 
% 		\hline
% 	\end{tabular} 
% 	\label{tab:hrt}
% \end{table} 
% \vspace{-0.3cm}

\textbf{Online Hyper-representation Learning}~~ 
Representation learning \cite{franceschi2018bilevel,grazzi2020iteration} seeks to extract good representations of the data. The learnt representation mapping can be used in downstream tasks to facilitate the learning of task specific model parameters. This formulation is typically encountered in a multi-task setup, where $\Lambda$  captures the common representation extracted for multiple tasks and $w$ defines the task-specific model parameters. When the data/task arrives in an online manner, the hyper-representation needs to be continuously adapted  to incorporate the new knowledge.

% In online hyper-representation learning, the agent seeks to continuously update 
% the representation $\Lambda \in \mathbb{R}^{p \times d}$ of the data, based on which the learnt model can achieve minimal loss for a regression or a classification problem. 
Following \cite{grazzi2020iteration}, we study online hyper-representation learning (Online HR) with linear models.
Specifically, at each round $t$, the agent applies the hyper-representation $\Lambda_t\in \mathbb{R}^{p \times d}$ and the linear model prediction $w_t \in \mathbb{R}^d$, and then receives  small minibatches $(X_t^f, Y_t^f)$ and $(X_t^g, Y_t^g)$. Based on $\Lambda_t$ and $(X_t^g, Y_t^g)$, the agent updates her linear model prediction $w_{t+1}$ as an estimation of $w^*(\Lambda_t)=\argmin_{w \in \mathbb{R}^{d}}~ g_t(\Lambda_t, w) \coloneqq \|X^{g}_t \Lambda_t w-Y^{g}_t\|^{2}+\frac{\gamma}{2}\|w\|^{2}$. Based on the estimation $w_{t+1}$ and  $(X_t^f, Y_t^f)$, the agent further updates her decision $\Lambda_{t+1}$ about the hyper-representation to minimize the loss $f_t(\Lambda, w_t^*(\Lambda)) \coloneqq \|X^{f}_t\Lambda w_t^*(\Lambda)-Y^{f}_t\|^{2}$. 
In our experiments, we consider synthetic data generated as in \cite{grazzi2020iteration} and explore two distinct settings: (i) a static setup where the underlying model generating the minibatches is fixed; and (ii) a staged dynamic setup where the model changes after some steps. 

As shown in \cref{fig:static} and \cref{fig:dyna}, SOBOW achieves comparable regret with OAGD in both static and dynamic setups, without the need of knowing previous functions. In terms of the running time for 5000 steps with $K=50$, SOBOW takes 11 seconds, OAGD takes 228 seconds and OGD takes 7 seconds. 
Therefore, SOBOW is much more computationally efficient compared to OAGD, 
because SOBOW does not need to re-evaluate the previous functions on the current model at each round. On the other hand, \emph{SOBOW performs substantially better than OGD (i.e., OAGD when previous functions are not available) with similar running time}. These results not only demonstrate the usefulness of SOBOW when the previous functions are not available, but also corroborate the benefit of window-averaged outer-level decision update by leveraging the historical hypergradient estimations in OBO. \cref{fig:eta} shows the performance of  SOBOW under different values of the averaging parameter $\eta$. The performance is better as $\eta \rightarrow 1$, which is also consistent with our theoretical results. \cref{fig:n} indicates that a small number of updates for the inner-level variable is indeed enough for online HR.

\textbf{Online Hyperparameter Optimization}~~
The goal of hyperparameter optimization (HO) \cite{franceschi2018bilevel,grazzi2020iteration} is to search for the best values of hyperparameters $\lambda$, which seeks to minimize the validation loss of the learnt model parameters $w$ and is usually done offline. However, in online applications where the data distribution can dynamically change, e.g., the unusual traffic patterns in online traffic time series prediction problem  \cite{zhan2018efficient},
 keeping the hyperparameters static could lead to sub-optimal performance. Therefore, the hyperparameters
should be continuously updated together with the model
parameters in an online manner.

Specifically, at each online round $t$, the  agent applies the hyperparameters $\lambda_t$ and the model $w_t$, and then  
receives a small dataset $\mathcal{D}_t = \{\mathcal{D}_t^\mathrm{tr}, \mathcal{D}_t^\mathrm{val}\}$ composed of a training subset $\mathcal{D}_t^\mathrm{tr}$ and  a validation subset $\mathcal{D}_t^\mathrm{val}$. Based on $\lambda_t$ and  $\mathcal{D}_t^\mathrm{tr}$, the agent 
first updates her model prediction $w_{t+1}$ as an estimation of $w_t^*(\lambda_t) \coloneqq\argmin_{w} ~ \mathcal{L}_t^{\mathrm{tr}}\left( \lambda_t, w\right)$, where 
$\mathcal{L}_t^{\mathrm{tr}}( \lambda, w):=\frac{1}{\left|\mathcal{D}_t^{\mathrm{tr}}\right|} \sum_{\zeta \in \mathcal{D}_t^{\mathrm{tr}}}\mathcal{L}(w; \zeta)+\Omega(\lambda, w)$, $\gL (w, \zeta)$ is a cost function computed on data point $\zeta$ with prediction model $w$, and $\Omega(w, \lambda)$ is a regularizer. Based on the model prediction $w_{t+1}$ and $\mathcal{D}_t^\mathrm{tr}$, the agent updates the hyperparameters $\lambda_{t+1}$ to minimize the validation loss  $\mathcal{L}_t^{\mathrm{val}}\left(\lambda, w_t^*(\lambda)\right) \coloneqq \frac{1}{\left|\mathcal{D}_t^{\mathrm{val}}\right|} \sum_{\xi \in \mathcal{D}_t^{\mathrm{val}}} \mathcal{L}\left(w_t^*(\lambda) ; \xi\right)$.

\begin{table}
\vspace{-0.2cm}
\footnotesize
 \caption{\textbf{Left:} Comparison for static online HO. We report accuracy and loss on a separate test split after 12000 steps. \textbf{Right:} Comparison for dynamic online HO. We report accuracy on a separate test split at the end of stream with corruption level 20\% and 30\%. Each level lasts for 4000 steps.}  
\centering
% \makebox[0pt][c]{\parbox{1.0\textwidth}{%
\begin{tabular}{cc}
\small
\renewcommand\arraystretch{1.0}
 % \caption{This is a very very very long Caption}
        \centering
        \begin{adjustbox}{width=0.45\textwidth}
	\begin{tabular}{c|c|c|c}
		\hline
		 Method & Accuracy (\%) & Test Loss & Time (s) \\
		\hline
		\hline
		SOBOW-4 & $65.87$ & $1.287$ & 899 \\
		\hline
		OAGD-4 & $65.96$ & $1.285$ & 2304\\ 
		\hline
        \hline
        SOBOW-50 & $66.32$ & $1.28$ & 1188 \\
		\hline
		OAGD-50 & $66.44$ & $1.273$ & 20161 \\ 
		\hline
	\end{tabular} 
 \end{adjustbox}
	\label{tab:hostat}
\quad

        \raggedright
        \begin{adjustbox}{width=0.48\textwidth}
	\begin{tabular}{c|c|c|c}
		\hline
		 Method & End 20\% stream & End 30\% stream & Time (s) \\ 
		\hline
		\hline
		SOBOW-4 & $58.39$ & $59.70$ & 1198 \\
		\hline
		OAGD-4 & $62.61$ & $59.26$ & 3072 \\ 
		\hline
	\end{tabular} 
 \end{adjustbox}
	% \label{tab:hodyna}
\end{tabular}
\vspace{-0.2cm}

\end{table}

% \vspace{-0.3cm}
% \begin{table}[h]
% 	\renewcommand\arraystretch{1.0}
% 	\centering
%  \small
%     \caption{Comparison for static online HO. We report the accuracy and loss on a separate test split after 12000 online steps.}  
%     %Our algorithm achieves comparable performance but is significantly more efficient. The advantage in running time is more important when the window size is large.}
%     \vspace{0.1cm}
% 	\begin{tabular}{c|c|c|c}
% 		\hline
% 		 Method & Accuracy (\%) & Test Loss & Time (s) \\
% 		\hline
% 		\hline
% 		SOBOW-4 & $65.87$ & $1.287$ & 899 \\
% 		\hline
% 		OAGD-4 & $65.96$ & $1.285$ & 2304\\ 
% 		\hline
%         \hline
%         SOBOW-50 & $66.32$ & $1.28$ & 1188 \\
% 		\hline
% 		OAGD-50 & $66.44$ & $1.273$ & 20161 \\ 
% 		\hline
% 	\end{tabular} 
% 	\label{tab:hostat}
% \end{table} 

% \vspace{-0.5cm}

% \begin{table}[h]
% 	\renewcommand\arraystretch{1.0}
% 	\centering
%  \small
%     \caption{Performance comparison for dynamic online HO. We report the accuracy on a separate test split at the end of stream with corruption level 20\% and 30\%. Each  level lasts for 4000 steps. }
%     \vspace{0.1cm}
% 	\begin{tabular}{c|c|c|c}
% 		\hline
% 		 Method & End 20\% stream & End 30\% stream & Time (s) \\ 
% 		\hline
% 		\hline
% 		SOBOW-4 & $58.39$ & $59.70$ & 1198 \\
% 		\hline
% 		OAGD-4 & $62.61$ & $59.26$ & 3072 \\ 
% 		\hline
% 	\end{tabular} 
	\label{tab:hodyna}
% \vspace{-0.0cm}
% \end{table} 

We consider an online classification setting on the 20 Newsgroup dataset, where the classifier is modeled by an affine transformation and we use the cross-entropy loss as the losscost function. For $\Omega(\lambda, w)$, we use one $\ell_2$-regularization parameter for each row of the transformation matrix in $w$, so that we have one regularization parameter for each data feature (i.e., $|\lambda|$ is given by the dimension of the data). We remove all news headers in the 20 Newsgroup dataset and pre-process the dataset so as to have data feature vectors of dimension $d=99238$. In our implementations, we approximate the hypergradient using implicit differentiation with the fixed point method \cite{grazzi2020iteration}. 
We consider two different setups: (i) a static setup where the agent receives a stream of clean data batches $\{\mathcal{D}_t\}_t$; (ii) a dynamic setting in which the agent receives a stream of corrupted batches $\{\mathcal{D}_t\}_t$, where the corruption level changes after some time steps. For both setups the batchsize is fixed to $16$. For the dynamic setting we consider four different corruption levels $\{5\%, 10\%, 20\%, 30\%\}$ and also optimize the learning rate as an additional hyperparameter. 
% and is expected to find the best hyperparameters $\lambda_t^*$ as follows: 
% \begin{align}
% \min_{\lambda} ~ &\mathcal{L}_t^{\mathrm{val}}\left(\lambda, w_t^*(\lambda)\right) \coloneqq \frac{1}{\left|\mathcal{D}_t^{\mathrm{val}}\right|} \sum_{\xi \in \mathcal{D}_t^{\mathrm{val}}} \mathcal{L}\left(w_t^*(\lambda) ; \xi\right),\nonumber \\ \mbox{s.t.}~ &w_t^*(\lambda) \coloneqq\argmin_{w} ~ \mathcal{L}_t^{\mathrm{tr}}\left( \lambda, w\right),
% \end{align}
% where 
% $\mathcal{L}_t^{\mathrm{tr}}( \lambda, w):=\frac{1}{\left|\mathcal{D}_t^{\mathrm{tr}}\right|} \sum_{\zeta \in \mathcal{D}_t^{\mathrm{tr}}}\mathcal{L}(w; \zeta)+\Omega(\lambda, w)$, $\gL (w, \zeta)$ is a cost function computed on data point $\zeta$ with prediction model $w$, $\Omega(w, \lambda)$ is a regularizer. 

We evaluate the testing accuracy for SOBOW and OAGD in \cref{tab:hostat} for both static (Left) and dynamic (Right) setups. It can be seen that compared to OAGD, SOBOW achieves similar accuracy but with a much shorter running time. When the window size increases in \cref{tab:hostat}, performance of both SOBOW and OAGD increases and
the computational advantage of SOBOW becomes more significant.
In particular, SOBOW runs around $20$ times faster than OAGD when the window size is 50.

% \subsection{Online Adversarial Learning}

% Setup: The system is an online learning system based on streaming data. The defender seeks to build a robust system to unknown adversarial attacks, only based on limited knowledge of the adversary, for example, the loss value, the gradient and hessian of the loss only at the current actions.

% Dataset: real dataset, can be the mixture of multiple datasets to capture the distribution shift

% Loss functions: use the standard loss functions in offline adversarial learning

% Baselines: OGD, inner run to converge

% \subsection{Online Meta-Learning under Distribution Shift without Task Boundaries}

% Setup: similar to our online meta-learning work

% Key: update at every minibatch, no detection of task switches and distribution shift, we use memory to store previous gradient

\section{Conclusions and Discussion} 

In this work, we study non-convex bilevel optimization where the functions can be time-varying and the agent continuously updates the decisions with online streaming data. We proposed a single-loop online bilevel optimizer with window averaging (SOBOW) to handle the function variations and the unavailability of the true hypergradients in OBO. Compared to existing algorithms, SOBOW is computationally efficient and does not require previous function information. We next developed a novel analytical technique to tackle the unique challenges in OBO and showed that SOBOW can achieve a sublinear bilevel local regret. Extensive experiments justified the effectiveness of SOBOW.
We also discuss the potential applications of the OBO framework in online meta-learning and online adversarial training (see Appendix).

\emph{Limitation and future directions}~~ The study of online bilevel optimization is still in a very early stage, and much of this new framework still remains under-explored and not well understood. We started with the second-order approach for hypergradient estimation, which is less scalable. One future direction is to leverage the recently developed first order approaches for hypergradient estimation. Another limitation is that we assume that the inner-level objective function is strongly convex. In the future, we will investigate the convex and even non-convex case.

\section*{Acknowledgments}

This work has been supported in part by NSF grants NSF AI Institute (AI-EDGE) CNS-2112471, CNS-2106933, CNS-2106932, CNS-2312836, CNS-1955535, CNS-1901057, ECCS-2113860, DMS-2134145, and 2311274, by Army Research Office under Grant W911NF-21-1-0244, and was sponsored by the Army Research Laboratory and was accomplished under Cooperative Agreement Number W911NF-23-2-0225. The views and conclusions contained in this document are those of the authors and should not be interpreted as representing the official policies, either expressed or implied, of the Army Research Laboratory or the U.S. Government. The U.S. Government is authorized to reproduce and distribute reprints for Government purposes notwithstanding any copyright notation herein.

% In the unusual situation where you want a paper to appear in the
% references without citing it in the main text, use \nocite
% \nocite{langley00}

\bibliography{reference}

\begin{thebibliography}{10}

\bibitem{agarwal2019learning}
Naman Agarwal, Alon Gonen, and Elad Hazan.
\newblock Learning in non-convex games with an optimization oracle.
\newblock In {\em Conference on Learning Theory}, pages 18--29. PMLR, 2019.

\bibitem{aydore2019dynamic}
Sergul Aydore, Tianhao Zhu, and Dean~P Foster.
\newblock Dynamic local regret for non-convex online forecasting.
\newblock {\em Advances in neural information processing systems}, 32, 2019.

\bibitem{bao2021stability}
Fan Bao, Guoqiang Wu, Chongxuan Li, Jun Zhu, and Bo~Zhang.
\newblock Stability and generalization of bilevel programming in hyperparameter
  optimization.
\newblock {\em Advances in Neural Information Processing Systems},
  34:4529--4541, 2021.

\bibitem{bertinetto2018meta}
Luca Bertinetto, Joao~F Henriques, Philip Torr, and Andrea Vedaldi.
\newblock Meta-learning with differentiable closed-form solvers.
\newblock In {\em International Conference on Learning Representations}, 2018.

\bibitem{besbes2015non}
Omar Besbes, Yonatan Gur, and Assaf Zeevi.
\newblock Non-stationary stochastic optimization.
\newblock {\em Operations research}, 63(5):1227--1244, 2015.

\bibitem{bracken1973mathematical}
Jerome Bracken and James~T McGill.
\newblock Mathematical programs with optimization problems in the constraints.
\newblock {\em Operations Research}, 21(1):37--44, 1973.

\bibitem{chen2021single}
Tianyi Chen, Yuejiao Sun, and Wotao Yin.
\newblock A single-timescale stochastic bilevel optimization method.
\newblock {\em arXiv preprint arXiv:2102.04671}, 2021.

\bibitem{cui2019fast}
Jiequan Cui, Pengguang Chen, Ruiyu Li, Shu Liu, Xiaoyong Shen, and Jiaya Jia.
\newblock Fast and practical neural architecture search.
\newblock In {\em Proceedings of the IEEE/CVF International Conference on
  Computer Vision}, pages 6509--6518, 2019.

\bibitem{dagreou2022framework}
Mathieu Dagr{\'e}ou, Pierre Ablin, Samuel Vaiter, and Thomas Moreau.
\newblock A framework for bilevel optimization that enables stochastic and
  global variance reduction algorithms.
\newblock {\em Advances in Neural Information Processing Systems},
  35:26698--26710, 2022.

\bibitem{domke2012generic}
Justin Domke.
\newblock Generic methods for optimization-based modeling.
\newblock In {\em Artificial Intelligence and Statistics}, pages 318--326.
  PMLR, 2012.

\bibitem{finn2019online}
Chelsea Finn, Aravind Rajeswaran, Sham Kakade, and Sergey Levine.
\newblock Online meta-learning.
\newblock In {\em International Conference on Machine Learning}, pages
  1920--1930. PMLR, 2019.

\bibitem{franceschi2017forward}
Luca Franceschi, Michele Donini, Paolo Frasconi, and Massimiliano Pontil.
\newblock Forward and reverse gradient-based hyperparameter optimization.
\newblock In {\em International Conference on Machine Learning}, pages
  1165--1173. PMLR, 2017.

\bibitem{franceschi2018bilevel}
Luca Franceschi, Paolo Frasconi, Saverio Salzo, Riccardo Grazzi, and
  Massimiliano Pontil.
\newblock Bilevel programming for hyperparameter optimization and
  meta-learning.
\newblock In {\em International Conference on Machine Learning}, pages
  1568--1577. PMLR, 2018.

\bibitem{ghadimi2018approximation}
Saeed Ghadimi and Mengdi Wang.
\newblock Approximation methods for bilevel programming.
\newblock {\em arXiv preprint arXiv:1802.02246}, 2018.

\bibitem{ghai2022non}
Udaya Ghai, Zhou Lu, and Elad Hazan.
\newblock Non-convex online learning via algorithmic equivalence.
\newblock {\em arXiv preprint arXiv:2205.15235}, 2022.

\bibitem{gould2016differentiating}
Stephen Gould, Basura Fernando, Anoop Cherian, Peter Anderson, Rodrigo~Santa
  Cruz, and Edison Guo.
\newblock On differentiating parameterized argmin and argmax problems with
  application to bi-level optimization.
\newblock {\em arXiv preprint arXiv:1607.05447}, 2016.

\bibitem{grazzi2020iteration}
Riccardo Grazzi, Luca Franceschi, Massimiliano Pontil, and Saverio Salzo.
\newblock On the iteration complexity of hypergradient computation.
\newblock In {\em International Conference on Machine Learning}, pages
  3748--3758. PMLR, 2020.

\bibitem{guo2021randomized}
Zhishuai Guo and Tianbao Yang.
\newblock Randomized stochastic variance-reduced methods for stochastic bilevel
  optimization.
\newblock {\em arXiv preprint arXiv:2105.02266}, 2021.

\bibitem{hallak2021regret}
Nadav Hallak, Panayotis Mertikopoulos, and Volkan Cevher.
\newblock Regret minimization in stochastic non-convex learning via a
  proximal-gradient approach.
\newblock In {\em International Conference on Machine Learning}, pages
  4008--4017. PMLR, 2021.

\bibitem{hansen1992new}
Pierre Hansen, Brigitte Jaumard, and Gilles Savard.
\newblock New branch-and-bound rules for linear bilevel programming.
\newblock {\em SIAM Journal on scientific and Statistical Computing},
  13(5):1194--1217, 1992.

\bibitem{hazan2007logarithmic}
Elad Hazan, Amit Agarwal, and Satyen Kale.
\newblock Logarithmic regret algorithms for online convex optimization.
\newblock {\em Machine Learning}, 69(2):169--192, 2007.

\bibitem{hazan2016introduction}
Elad Hazan et~al.
\newblock Introduction to online convex optimization.
\newblock {\em Foundations and Trends{\textregistered} in Optimization},
  2(3-4):157--325, 2016.

\bibitem{hazan2017efficient}
Elad Hazan, Karan Singh, and Cyril Zhang.
\newblock Efficient regret minimization in non-convex games.
\newblock In {\em International Conference on Machine Learning}, pages
  1433--1441. PMLR, 2017.

\bibitem{heliou2020online}
Am{\'e}lie H{\'e}liou, Matthieu Martin, Panayotis Mertikopoulos, and Thibaud
  Rahier.
\newblock Online non-convex optimization with imperfect feedback.
\newblock {\em Advances in Neural Information Processing Systems},
  33:17224--17235, 2020.

\bibitem{heliou2021zeroth}
Am{\'e}lie H{\'e}liou, Matthieu Martin, Panayotis Mertikopoulos, and Thibaud
  Rahier.
\newblock Zeroth-order non-convex learning via hierarchical dual averaging.
\newblock In {\em International Conference on Machine Learning}, pages
  4192--4202. PMLR, 2021.

\bibitem{hong2023two}
Mingyi Hong, Hoi-To Wai, Zhaoran Wang, and Zhuoran Yang.
\newblock A two-timescale stochastic algorithm framework for bilevel
  optimization: Complexity analysis and application to actor-critic.
\newblock {\em SIAM Journal on Optimization}, 33(1):147--180, 2023.

\bibitem{hu2022improved}
Xiaoyin Hu, Nachuan Xiao, Xin Liu, and Kim-Chuan Toh.
\newblock An improved unconstrained approach for bilevel optimization.
\newblock {\em arXiv preprint arXiv:2208.00732}, 2022.

\bibitem{ji2020convergence}
Kaiyi Ji, Jason~D Lee, Yingbin Liang, and H~Vincent Poor.
\newblock Convergence of meta-learning with task-specific adaptation over
  partial parameters.
\newblock {\em Advances in Neural Information Processing Systems},
  33:11490--11500, 2020.

\bibitem{ji2021lower}
Kaiyi Ji and Yingbin Liang.
\newblock Lower bounds and accelerated algorithms for bilevel optimization.
\newblock {\em arXiv preprint arXiv:2102.03926}, 2021.

\bibitem{ji2022will}
Kaiyi Ji, Mingrui Liu, Yingbin Liang, and Lei Ying.
\newblock Will bilevel optimizers benefit from loops.
\newblock {\em arXiv preprint arXiv:2205.14224}, 2022.

\bibitem{ji2020provably}
Kaiyi Ji, Junjie Yang, and Yingbin Liang.
\newblock Provably faster algorithms for bilevel optimization and applications
  to meta-learning.
\newblock 2020.

\bibitem{ji2021bilevel}
Kaiyi Ji, Junjie Yang, and Yingbin Liang.
\newblock Bilevel optimization: Convergence analysis and enhanced design.
\newblock In {\em International Conference on Machine Learning}, pages
  4882--4892. PMLR, 2021.

\bibitem{khanduri2021near}
Prashant Khanduri, Siliang Zeng, Mingyi Hong, Hoi-To Wai, Zhaoran Wang, and
  Zhuoran Yang.
\newblock A near-optimal algorithm for stochastic bilevel optimization via
  double-momentum.
\newblock {\em Advances in Neural Information Processing Systems},
  34:30271--30283, 2021.

\bibitem{lesage2020second}
Antoine Lesage-Landry, Joshua~A Taylor, and Iman Shames.
\newblock Second-order online nonconvex optimization.
\newblock {\em IEEE Transactions on Automatic Control}, 66(10):4866--4872,
  2020.

\bibitem{li2020improved}
Junyi Li, Bin Gu, and Heng Huang.
\newblock Improved bilevel model: Fast and optimal algorithm with theoretical
  guarantee.
\newblock {\em arXiv preprint arXiv:2009.00690}, 2020.

\bibitem{li2022fully}
Junyi Li, Bin Gu, and Heng Huang.
\newblock A fully single loop algorithm for bilevel optimization without
  hessian inverse.
\newblock In {\em Proceedings of the AAAI Conference on Artificial
  Intelligence}, volume~36, pages 7426--7434, 2022.

\bibitem{lin2006tutorial}
Xiaojun Lin, Ness~B Shroff, and Rayadurgam Srikant.
\newblock A tutorial on cross-layer optimization in wireless networks.
\newblock {\em IEEE Journal on Selected areas in Communications},
  24(8):1452--1463, 2006.

\bibitem{liu2018darts}
Hanxiao Liu, Karen Simonyan, and Yiming Yang.
\newblock Darts: Differentiable architecture search.
\newblock {\em arXiv preprint arXiv:1806.09055}, 2018.

\bibitem{liu2021value}
Risheng Liu, Xuan Liu, Xiaoming Yuan, Shangzhi Zeng, and Jin Zhang.
\newblock A value-function-based interior-point method for non-convex bi-level
  optimization.
\newblock In {\em International Conference on Machine Learning}, pages
  6882--6892. PMLR, 2021.

\bibitem{liu2020generic}
Risheng Liu, Pan Mu, Xiaoming Yuan, Shangzhi Zeng, and Jin Zhang.
\newblock A generic first-order algorithmic framework for bi-level programming
  beyond lower-level singleton.
\newblock In {\em International Conference on Machine Learning}, pages
  6305--6315. PMLR, 2020.

\bibitem{lopez2017gradient}
David Lopez-Paz and Marc'Aurelio Ranzato.
\newblock Gradient episodic memory for continual learning.
\newblock {\em Advances in neural information processing systems}, 30, 2017.

\bibitem{lorraine2020optimizing}
Jonathan Lorraine, Paul Vicol, and David Duvenaud.
\newblock Optimizing millions of hyperparameters by implicit differentiation.
\newblock In {\em International Conference on Artificial Intelligence and
  Statistics}, pages 1540--1552. PMLR, 2020.

\bibitem{lv2007penalty}
Yibing Lv, Tiesong Hu, Guangmin Wang, and Zhongping Wan.
\newblock A penalty function method based on kuhn--tucker condition for solving
  linear bilevel programming.
\newblock {\em Applied mathematics and computation}, 188(1):808--813, 2007.

\bibitem{mackay2019self}
Matthew MacKay, Paul Vicol, Jon Lorraine, David Duvenaud, and Roger Grosse.
\newblock Self-tuning networks: Bilevel optimization of hyperparameters using
  structured best-response functions.
\newblock {\em arXiv preprint arXiv:1903.03088}, 2019.

\bibitem{maclaurin2015gradient}
Dougal Maclaurin, David Duvenaud, and Ryan Adams.
\newblock Gradient-based hyperparameter optimization through reversible
  learning.
\newblock In {\em International conference on machine learning}, pages
  2113--2122. PMLR, 2015.

\bibitem{madry2017towards}
Aleksander Madry, Aleksandar Makelov, Ludwig Schmidt, Dimitris Tsipras, and
  Adrian Vladu.
\newblock Towards deep learning models resistant to adversarial attacks.
\newblock {\em arXiv preprint arXiv:1706.06083}, 2017.

\bibitem{mcmahan2010adaptive}
H~Brendan McMahan and Matthew Streeter.
\newblock Adaptive bound optimization for online convex optimization.
\newblock {\em arXiv preprint arXiv:1002.4908}, 2010.

\bibitem{mokhtari2016online}
Aryan Mokhtari, Shahin Shahrampour, Ali Jadbabaie, and Alejandro Ribeiro.
\newblock Online optimization in dynamic environments: Improved regret rates
  for strongly convex problems.
\newblock In {\em 2016 IEEE 55th Conference on Decision and Control (CDC)},
  pages 7195--7201. IEEE, 2016.

\bibitem{moore2010bilevel}
Gregory~M Moore.
\newblock {\em Bilevel programming algorithms for machine learning model
  selection}.
\newblock Rensselaer Polytechnic Institute, 2010.

\bibitem{nazari2021dynamic}
Parvin Nazari and Esmaile Khorram.
\newblock Dynamic regret analysis for online meta-learning.
\newblock {\em arXiv preprint arXiv:2109.14375}, 2021.

\bibitem{park2021diminishing}
SangWoo Park, Julie Mulvaney-Kemp, Ming Jin, and Javad Lavaei.
\newblock Diminishing regret for online nonconvex optimization.
\newblock In {\em 2021 American Control Conference (ACC)}, pages 978--985.
  IEEE, 2021.

\bibitem{pedregosa2016hyperparameter}
Fabian Pedregosa.
\newblock Hyperparameter optimization with approximate gradient.
\newblock In {\em International conference on machine learning}, pages
  737--746. PMLR, 2016.

\bibitem{shaban2019truncated}
Amirreza Shaban, Ching-An Cheng, Nathan Hatch, and Byron Boots.
\newblock Truncated back-propagation for bilevel optimization.
\newblock In {\em The 22nd International Conference on Artificial Intelligence
  and Statistics}, pages 1723--1732. PMLR, 2019.

\bibitem{shalev2012online}
Shai Shalev-Shwartz et~al.
\newblock Online learning and online convex optimization.
\newblock {\em Foundations and Trends{\textregistered} in Machine Learning},
  4(2):107--194, 2012.

\bibitem{shi2005extended}
Chenggen Shi, Jie Lu, and Guangquan Zhang.
\newblock An extended kuhn--tucker approach for linear bilevel programming.
\newblock {\em Applied Mathematics and Computation}, 162(1):51--63, 2005.

\bibitem{sow2022constrained}
Daouda Sow, Kaiyi Ji, Ziwei Guan, and Yingbin Liang.
\newblock A constrained optimization approach to bilevel optimization with
  multiple inner minima.
\newblock {\em arXiv preprint arXiv:2203.01123}, 2022.

\bibitem{stadie2020learning}
Bradly Stadie, Lunjun Zhang, and Jimmy Ba.
\newblock Learning intrinsic rewards as a bi-level optimization problem.
\newblock In {\em Conference on Uncertainty in Artificial Intelligence}, pages
  111--120. PMLR, 2020.

\bibitem{suggala2020online}
Arun~Sai Suggala and Praneeth Netrapalli.
\newblock Online non-convex learning: Following the perturbed leader is
  optimal.
\newblock In {\em Algorithmic Learning Theory}, pages 845--861. PMLR, 2020.

\bibitem{tarzanagh2022online}
Davoud~Ataee Tarzanagh and Laura Balzano.
\newblock Online bilevel optimization: Regret analysis of online alternating
  gradient methods.
\newblock {\em arXiv preprint arXiv:2207.02829}, 2022.

\bibitem{wong2020fast}
Eric Wong, Leslie Rice, and J~Zico Kolter.
\newblock Fast is better than free: Revisiting adversarial training.
\newblock {\em arXiv preprint arXiv:2001.03994}, 2020.

\bibitem{yang2016tracking}
Tianbao Yang, Lijun Zhang, Rong Jin, and Jinfeng Yi.
\newblock Tracking slowly moving clairvoyant: Optimal dynamic regret of online
  learning with true and noisy gradient.
\newblock In {\em International Conference on Machine Learning}, pages
  449--457. PMLR, 2016.

\bibitem{yang2018convergent}
Zhuoran Yang, Zuyue Fu, Kaiqing Zhang, and Zhaoran Wang.
\newblock Convergent reinforcement learning with function approximation: A
  bilevel optimization perspective.
\newblock 2018.

\bibitem{zhan2018efficient}
Hongyuan Zhan, Gabriel Gomes, Xiaoye~S Li, Kamesh Madduri, and Kesheng Wu.
\newblock Efficient online hyperparameter optimization for kernel ridge
  regression with applications to traffic time series prediction.
\newblock {\em arXiv preprint arXiv:1811.00620}, 2018.

\bibitem{zhang2017improved}
Lijun Zhang, Tianbao Yang, Jinfeng Yi, Rong Jin, and Zhi-Hua Zhou.
\newblock Improved dynamic regret for non-degenerate functions.
\newblock {\em Advances in Neural Information Processing Systems}, 30, 2017.

\bibitem{zhang2022revisiting}
Yihua Zhang, Guanhua Zhang, Prashant Khanduri, Mingyi Hong, Shiyu Chang, and
  Sijia Liu.
\newblock Revisiting and advancing fast adversarial training through the lens
  of bi-level optimization.
\newblock In {\em International Conference on Machine Learning}, pages
  26693--26712. PMLR, 2022.

\bibitem{zhao2021improved}
Peng Zhao and Lijun Zhang.
\newblock Improved analysis for dynamic regret of strongly convex and smooth
  functions.
\newblock In {\em Learning for Dynamics and Control}, pages 48--59. PMLR, 2021.

\bibitem{zhuang2020no}
Zhenxun Zhuang, Yunlong Wang, Kezi Yu, and Songtao Lu.
\newblock No-regret non-convex online meta-learning.
\newblock In {\em ICASSP 2020-2020 IEEE International Conference on Acoustics,
  Speech and Signal Processing (ICASSP)}, pages 3942--3946. IEEE, 2020.

\bibitem{zinkevich2003online}
Martin Zinkevich.
\newblock Online convex programming and generalized infinitesimal gradient
  ascent.
\newblock In {\em Proceedings of the 20th international conference on machine
  learning (icml-03)}, pages 928--936, 2003.

\end{thebibliography}
\bibliographystyle{plain}

%%%%%%%%%%%%%%%%%%%%%%%%%%%%%%%%%%%%%%%%%%%%%%%%%%%%%%%%%%%%%%%%%%%%%%%%%%%%%%%
%%%%%%%%%%%%%%%%%%%%%%%%%%%%%%%%%%%%%%%%%%%%%%%%%%%%%%%%%%%%%%%%%%%%%%%%%%%%%%%
% APPENDIX
%%%%%%%%%%%%%%%%%%%%%%%%%%%%%%%%%%%%%%%%%%%%%%%%%%%%%%%%%%%%%%%%%%%%%%%%%%%%%%%
%%%%%%%%%%%%%%%%%%%%%%%%%%%%%%%%%%%%%%%%%%%%%%%%%%%%%%%%%%%%%%%%%%%%%%%%%%%%%%%
% \newpage
% \appendix
% \onecolumn
\newpage
\appendix
\section*{Appendix}

\section{Discussion about practical applications of OBO}

(1) In the traffic flow prediction problem \cite{zhan2018efficient}, since the data collected by the traffic sensors arrive at the controller continuously and frequently, the hyperparameters need to be optimized (in the outer level) quickly in an online manner in order to guarantee the performance of the prediction model (in the inner level). Keeping the hyperparameters static for the prediction model may result in sub-optimal performance, because the distribution of the traffic flow can change gradually. Further, the controller may not have global information of the traffic flows, because it is exorbitantly expensive to deploy sensors to cover all traffic flows. 

(2) In the wireless network control problem \cite{lin2006tutorial}, the controller allocates resources, e.g., wireless channel bandwidth, to the users, where each user can have its own utility function depending on the wireless channel conditions given the allocated resources. Since wireless channels are usually time-varying, the controller has to continuously update the resource allocation quickly to maximize the network performance. The rate allocation decisions (deciding what packet rate a user transmits at any given time) and scheduling decisions (which users transmit) are done at a fast time-scale (in the inner level), while determining the utility functions of the users could be done at a slower time scale (in the outer level). Further these decisions need to be made in a distributed fashion, so under local knowledge of the channel conditions and interference levels. 

In these applications, the current offline bilevel optimization framework cannot be directly applied because of the streaming data, the time-varying functions and possibly the limited information about the system. In contrast, online bilevel optimization has great potential for these online applications. Moreover, the decision making in these online applications also needs to be fast and efficient without the need of knowing all previous functions. Thus, the regret captures the performance of the learning model over the sequential process rather than just the performance of a final output model in the offline setting. How to design such algorithms with a sublinear regret guarantee is very important for making online bilevel optimization more practical in real applications.

\section{Applications of OBO in meta-learning and adversarial training}

{\bf Online meta-learning~} In online meta-learning, learning tasks arrive one at a time, and the agent aims to learn a good meta-model $\theta$ based on the past tasks in a sequential manner, which can be quickly adapted to a good task-specific model $\phi$ for the current task. Specifically, each task t has a training
dataset $\gD_t^{tr}$ and a testing dataset $\gD_t^{te}$, and given a meta-
model $\theta$, the optimal task model is defined as
\begin{align*}
    \phi_t^*(\theta)\in \arg\min_{\phi} g_t(\theta,\phi)=\gL_t(\phi,\gD_t^{tr})+\lambda \|\theta-\phi\|^2.
\end{align*}
In the OBO framework of online meta-learning, the meta-model $\theta_t$ is the outer-level decision variable at round $t$, and the task model $\phi_t$ is the inner-level decision variable. At round $t$, the task model $\phi_{t+1}$ is first obtained based on $g_t(\theta_t, \phi)$ as an estimation of $\phi_t^*(\theta_t)$; and then given $\phi_{t+1}$, the meta-model will be further updated w.r.t.
\begin{align*}   f_t(\theta,\phi_{t+1})=\gL_t(\phi_{t+1},\gD_t^{te})+\lambda \|\theta-\phi_{t+1}\|^2.
\end{align*}

% \textbf{Online hyperparameter optimization} The objective of hyperparameter optimization \cite{franceschi2018bilevel,grazzi2020iteration} is to search for the best values of hyperparameters $\lambda$, which seeks to minimize the validation loss of
% the learnt model parameters $w$ and is usually done offline.
% However, in online applications where the data distribution
% can dynamically change, e.g., the unusual traffic patterns
% in online traffic time series prediction problem  \cite{zhan2018efficient},
%  keeping the hyperparameters static could lead to sub-optimal performance. Therefore, the hyperparameters
% should be continuously updated together with the model
% parameters in an online manner.

% The OBO framework can be leveraged to solve this problem.
% Specifically, given the hyperparameters $\lambda_t$ at round $t$, the
% inner-level decision variable, i.e., the model parameter $w$, is
% first updated based on the regularized loss:
% \begin{align*}
%     g_t(\lambda_t,w)=\gL_t(w)+\Omega_{\lambda_t}(w)
% \end{align*}
% where $\Omega_{\lambda_t}$ is a regularizer parameterized by $\lambda_t$. For ex-
% ample, this model parameter update can correspond to
% one step online gradient descent for online optimization
% with $\lambda$ as the learning rate. Next, the hyperparameters
% $\lambda_{t+1}$ will be obtained to minimize some validation loss
% $f_t(\lambda, w_{t+1}) =  \Tilde{\gL}(w_{t+1})$ of model parameter $w_{t+1}$.

{\bf Online adversarial training~} Adversarial training \cite{madry2017towards,wong2020fast,zhang2022revisiting}
 is usually formulated as a min-max optimization problem. The defender learns a robust model to minimize the worst-case training loss against an attacker, where the attacker aims to maximize the loss by perturbing the training data. A static setting is often considered with full access to the target
dataset at all times. Nevertheless, many real-world applications involve streaming data that arrive in an online manner, e.g., the financial markets or real-time sensor networks. A continuously robust model update is more desirable in these applications against potential attacks on the streaming data.

This online adversarial training problem can also be addressed by the OBO framework. Here the model parameters $\theta$ is the outer-level decision variable and the adversarial perturbations $\delta$ to the data point $(x, y)$ is the inner-level
decision variable. At each time $t$, the defender updates the estimate $\delta_{t+1}$ of the worst adversarial perturbations, given her knowledge about the inner-level objection function $g_t(\theta_t,\delta_t)=-\gL(\theta_t, x_t+\delta,y_t)$ subject to the perturbation constraint. Based on $\delta_{t+1}$, the defender updates the model $\theta_{t+1}$ robustly w.r.t. the outer-level objective function $f_t(\theta, \delta_{t+1}) = \gL(\theta, x_t + \delta_{t+1}, y_t)$.

\section{Experimental details}

% \textcolor{red}{step size selection here}

We use a grid of values between $10^{-5}$ and $10$ to set the stepsizes and did not find the algorithm particularly sensitive to them for the experiments considered. For example, we achieve best performance by setting both the inner and outer stepsizes to $10^{-4}$ for the online hyper-representation learning experiments and small values around that scale yield the same performance. For the dynamic OHO experiments, only the outer step size is set manually to $0.01$. The inner step size is optimized along with the other regularization hyperparameters. 

OAGD needs to store the previous objective functions, which requires all the previous data points to compute the function values and additional resources to store the knowledge of the function structures. In contrast, our method only stores the previous hypergradient estimates averaged over previous data points. For example, when $n$ data points are used to evaluate the function at each round, the memory requirement for OAGD is $O(nK)$, compared to $O(K)$ for our method. Here, $K$ is the window size. Therefore, the memory cost of our method can be lower, especially when the number of data points is large. 
% \textcolor{red}{memory here}

\section{Comparison between the regret definitions}

For a clear comparison, we restate the definitions of bilevel local regret in our work and OAGD here:

Our definition: 
\begin{align*}
    BLR=\sum_{t=1}^T \left\|\frac{1}{W}\sum_{i=1}^{K-1}\eta^i \nabla f_{t-i}(x_{t-i}, y^*_{t-i}(x_{t-i}))\right\|^2.
\end{align*}

In OAGD: 
\begin{align*}
    BLR=\sum_{t=1}^T \left\|\frac{1}{W}\sum_{i=1}^{K-1}\eta^i \nabla f_{t-i}(x_{t}, y^*_{t}(x_{t}))\right\|^2.
\end{align*}

The key difference is that we evaluate the past loss $f_{t-i}$ using the variable updates $x_{t-i}$ and $y^*_{t-i}(x_{t-i})$ at exactly same time $t-i$, while in OAGD the past loss $f_{t-i}$ is evaluated using the most recent updates $x_{t}$ and $y^*_{t}(x_{t})$. As shown in \cite{aydore2019dynamic}, the static regret in OAGD can cause problems for time-varying loss functions. Intuitively, evaluating the objective at time slot $i$ using variable updates at different time slot $j$ can be misleading, because it does not properly characterize the online learning performance of the model update at time slot $i$, especially when the objective functions vary a lot.

In \cref{fig:regoagd} we compare our algorithm and OAGD using the regret notion proposed in OAGD. The results show that our algorithm still achieves similar regret performance compared to OAGD, but with a much shorter runtime. 

\begin{figure}
    \centering
    \includegraphics[width=0.49\textwidth,height=0.4\textwidth]{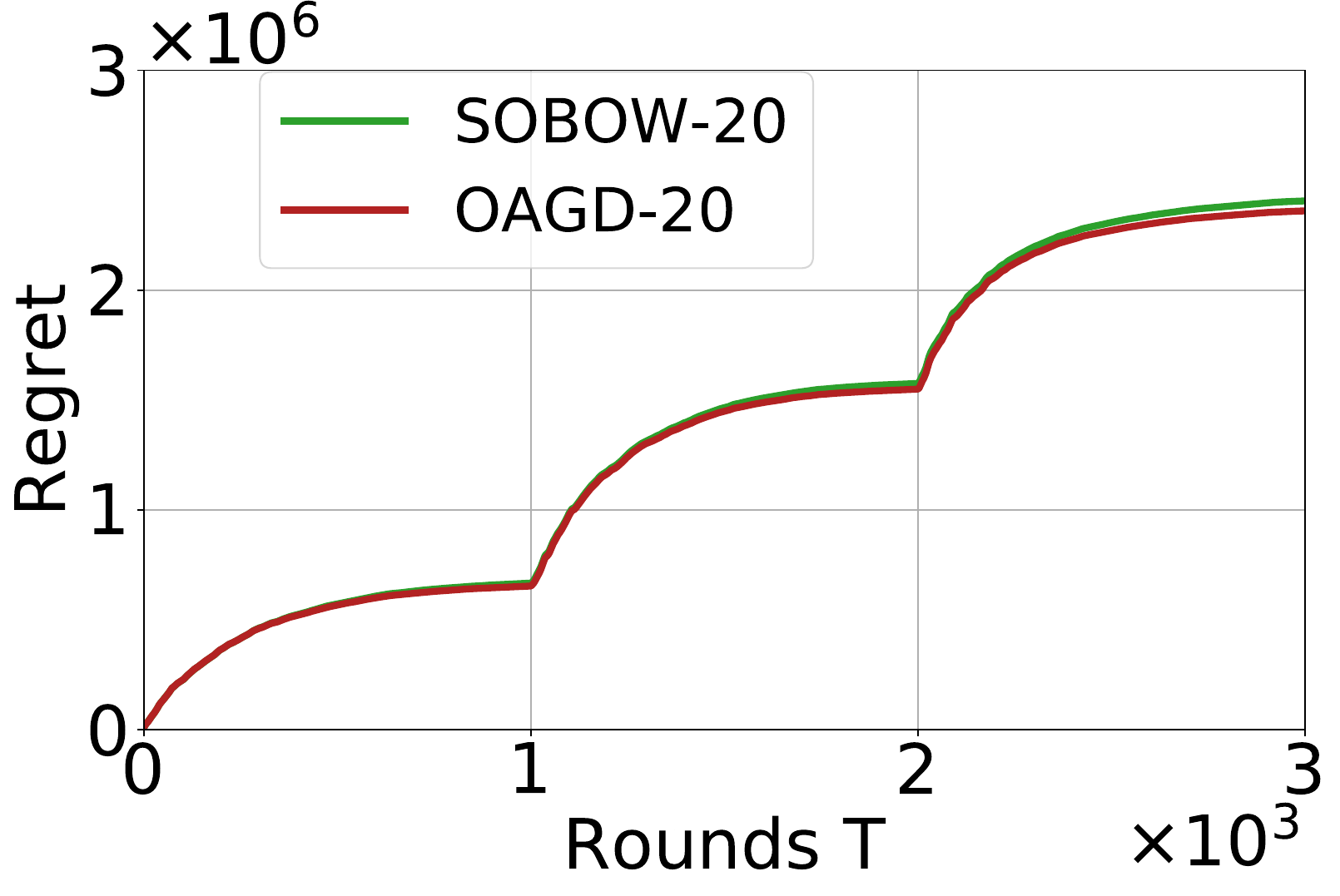}
    \caption{Comparison between our SOBOW and OAGD using the regret notion proposed in \cite{tarzanagh2022online} on the dynamic online HR problem.}
    \label{fig:regoagd}
\end{figure}

% \textcolor{red}{use OAGD regret, regret comparison}

\section{Additional Results}

Using path-length regularization to capture the variation of optimal decision variables is very common in the literature of dynamic online learning, e.g., \cite{zinkevich2003online,mokhtari2016online,yang2016tracking, zhang2017improved,zhao2021improved}. Note that because this variation of optimal decision variables is not controllable, we do not use this term in the design of the algorithm. Rather, the variation term is only used in the theoretical analysis to understand which factors in the system lead to a tighter bound on the regret. However, we mention here that it is also possible to explicitly analyze the regret in terms of the function variations directly.

\begin{theorem}\label{thm:variation}
Suppose that Assumptions \ref{assum:convex}-\ref{assum:fea} hold. Let $V_g=\sum_{t=1}^T \sup |g_{t+1}(x, y)-g_{t}(x, y)|$ and $V_f=\sum_{t=1}^T \sup [f_{t+1}(x, y)-f_t(x, y)]$. Under the same conditions on $\lambda$, $\alpha$, $Q_t$, $\eta$ and $\beta$ with Theorem \ref{thm:main}, we can have $BLR_w(T)\leq O\left(\frac{T}{\beta W} + \frac{V_f}{\beta} + V_g + \frac{\sqrt{T V_g}}{\beta} \right)$.
\end{theorem}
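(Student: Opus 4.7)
The plan is to reduce Theorem \ref{thm:variation} to Theorem \ref{thm:main} by bounding the decision-variation quantities $V_{1,T}$ and $H_{2,T}$ in terms of the pure function-value variations $V_f$ and $V_g$. Since Theorem \ref{thm:main} already gives $BLR_w(T)\le O(T/(\beta W) + V_{1,T}/\beta + H_{2,T})$, all that is needed is to show $V_{1,T} \le O(V_f + \sqrt{T V_g})$ and $H_{2,T} \le O(V_g)$, and then plug these in.

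The central lemma I would first establish is a pointwise bound of the form $\|y_{t+1}^*(x) - y_t^*(x)\|^2 \le \frac{4}{\mu_g}\, v_t$ for every $x$, where $v_t = \sup_{x,y}|g_{t+1}(x,y)-g_t(x,y)|$ so that $\sum_t v_t \le V_g$. The argument is a standard strong-convexity calculation: by Assumption \ref{assum:convex}, $g_{t+1}(x,\cdot)$ is $\mu_g$-strongly convex, so
\begin{align*}
\tfrac{\mu_g}{2}\|y_t^*(x)-y_{t+1}^*(x)\|^2 \le g_{t+1}(x,y_t^*(x)) - g_{t+1}(x,y_{t+1}^*(x)).
\end{align*}
Adding and subtracting $g_t(x,y_t^*(x))$ and $g_t(x,y_{t+1}^*(x))$ on the right, then using the optimality $g_t(x,y_t^*(x)) \le g_t(x,y_{t+1}^*(x))$ together with $|g_{t+1}-g_t|\le v_t$ (twice), gives the required inequality. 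The hard part here is ensuring that only function-value variations (not gradient variations) enter, which is exactly what the above decomposition delivers.

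Given this lemma, the bound $H_{2,T} = \sum_{t=2}^T \sup_x\|y_{t-1}^*(x)-y_t^*(x)\|^2 \le \frac{4}{\mu_g}\sum_{t=1}^{T-1} v_t \le \frac{4}{\mu_g} V_g$ is immediate. For $V_{1,T}$ I would split
\begin{align*}
f_{t+1}(x,y_{t+1}^*(x)) - f_t(x,y_t^*(x)) = \underbrace{[f_{t+1}-f_t](x,y_{t+1}^*(x))}_{\le V_f\text{ term}} + \underbrace{f_t(x,y_{t+1}^*(x)) - f_t(x,y_t^*(x))}_{\le L_0\|y_{t+1}^*(x)-y_t^*(x)\|},
\end{align*}
using Assumption \ref{assum:smooth}(1) for the second piece. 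Summing in $t$, applying the pointwise lemma, and then Cauchy--Schwarz,
\begin{align*}
\sum_{t=1}^T \sup_x \|y_{t+1}^*(x)-y_t^*(x)\| \le \sum_{t=1}^T \tfrac{2}{\sqrt{\mu_g}}\sqrt{v_t} \le \tfrac{2}{\sqrt{\mu_g}}\sqrt{T\, V_g},
\end{align*}
so $V_{1,T} \le V_f + O(\sqrt{T V_g})$.

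Finally I plug these two bounds into Theorem \ref{thm:main}:
\begin{align*}
BLR_w(T) \le O\!\left(\tfrac{T}{\beta W} + \tfrac{V_{1,T}}{\beta} + H_{2,T}\right) \le O\!\left(\tfrac{T}{\beta W} + \tfrac{V_f}{\beta} + \tfrac{\sqrt{T V_g}}{\beta} + V_g\right),
\end{align*}
which is the claimed bound. The main obstacle is really only the first step: obtaining a usable bound on $\|y_{t+1}^*(x)-y_t^*(x)\|$ from $V_g$ alone (rather than from a gradient-variation quantity), and doing so in a way that is uniform in $x$ so that the sup inside $V_{1,T}$ and $H_{2,T}$ can be absorbed. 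The square-root dependence $\sqrt{T V_g}$ (vs.\ a linear $V_g$) is an unavoidable artifact of the strong-convexity bound giving $\|y^*\text{ gap}\| \le O(\sqrt{v_t})$ together with Cauchy--Schwarz, and it matches analogous results in the dynamic online-learning literature.
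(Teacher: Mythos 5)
Your proposal is correct and follows essentially the same route as the paper's own argument: a strong-convexity bound giving $\sup_x\|y_{t+1}^*(x)-y_t^*(x)\|^2\leq O(1/\mu_g)\sup|g_{t+1}-g_t|$ (hence $H_{2,T}\leq O(V_g)$), the decomposition $f_{t+1}(x,y_{t+1}^*(x))-f_t(x,y_t^*(x))\leq L_0\|y_{t+1}^*(x)-y_t^*(x)\|+\sup[f_{t+1}-f_t]$ with Cauchy--Schwarz to get the $\sqrt{TV_g}$ term, and substitution into the bound of Theorem~\ref{thm:main}. The only cosmetic difference is your one-sided strong-convexity argument (constant $4/\mu_g$ versus the paper's two-sided $2/\mu_g$), which is immaterial for the $O(\cdot)$ statement.
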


The main proof idea is as follows:

(1) For the term $H_{2,T}$, based on the strong convexity of $g_t$, we can show that $\|y_{t+1}^*(x)-y_{t}^*(x)\|^2 \leq \frac{2}{\mu_g} \sup |g_{t+1}(x, y)-g_{t}(x, y)|$;

(2) For the term $V_{1,T}$, we can show that $f_{t+1}(x, y^*_{t+1}(x))-f_t(x,y_t^*(x))\leq L_0\|y^*_{t+1}(x)-y_t^*(x)\|+\sup [f_{t+1}(x, y)-f_t(x, y)]$, such that $\sum_{t=1}^T [F_{t,\eta}(x_{t},y_{t}^*(x_{t}))-F_{t,\eta}(x_{t+1},y_{t}^*(x_{t+1}))]$ in Lemma \ref{lem:term_a} can be upper bounded by $\frac{2MT}{W}+L_0\sqrt{\frac{2T}{\mu_g}}\sqrt{\sum_{t=1}^T \sup |g_{t+1}(x, y)-g_{t}(x, y)|}+ \sum_{t=1}^T \sup [f_{t+1}(x, y)-f_t(x, y)]$;

(3) Based on the above, if we denote $V_g=\sum_{t=1}^T \sup |g_{t+1}(x, y)-g_{t}(x, y)|$ and $V_f=\sum_{t=1}^T \sup [f_{t+1}(x, y)-f_t(x, y)]$ to capture the function variations, we can have the overall regret as $O\left(\frac{T}{\beta W} + \frac{V_f}{\beta} + V_g + \frac{\sqrt{T V_g}}{\beta} \right)$. In this case, a sublinear regret will be achieved if both $V_g$ and $V_f$ are $o(T)$ for suitably selected $W$. As mentioned in our previous response, the condition on the variation of $y_t^*(x)$ is weaker compared to the condition on the variation of $g_t$ in order to achieve a small regret. For example, suppose $W= \omega(T)$ and the function variation of $f_t$ is very small, to achieve a regret of $O(T^{3/4})$,  $V_{1,T}=O(T^{3/4})$ is sufficient, while we need a stricter condition on the function variation of $g_t$, i.e., $V_g=O(T^{1/2})$.

\section{Proof of \cref{lem:assm45}}

To prove \cref{lem:assm45}, we first have the following lemma about $v_t^*$:

\begin{lemma}\label{lem:bound_v}
    Suppose that \cref{assum:smooth}, \cref{assum:bound_value} and \cref{assum:fea} hold. We can have the following upper bound on $\|v_t^*\|$:
    \begin{align*}
        \|v_t^*\|\leq \frac{\rho\mu_g+DL_1^2+DL_1\mu_g}{\mu^2_g}\triangleq M_v.
    \end{align*}
\end{lemma}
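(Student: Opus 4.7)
The plan is to bound $v_t^*=(\nabla_y^2 g_t(x_t,y_t^*(x_t)))^{-1}\nabla_y f_t(x_t,y_t^*(x_t))$ by separately controlling the two factors on the right-hand side, using strong convexity of $g_t$ for the inverse Hessian and a combination of Assumption~\ref{assum:fea} and Lipschitz continuity for the gradient $\nabla_y f_t$.

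First, from Assumption~\ref{assum:convex}, $g_t$ is $\mu_g$-strongly convex in $y$, so $\nabla_y^2 g_t(x_t,y_t^*(x_t))\succeq \mu_g I$, which immediately yields $\|(\nabla_y^2 g_t(x_t,y_t^*(x_t)))^{-1}\|\leq 1/\mu_g$. Hence by submultiplicativity,
\begin{align*}
    \|v_t^*\|\leq \frac{1}{\mu_g}\,\|\nabla_y f_t(x_t,y_t^*(x_t))\|,
\end{align*}
so the task reduces to bounding $\|\nabla_y f_t(x_t,y_t^*(x_t))\|$.

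Next I would use Assumption~\ref{assum:fea} to get a reference anchor $\hat x\in\mathcal{X}$ with $\|\nabla_y f_t(\hat x,y_t^*(\hat x))\|\leq \rho$, and then transport this bound to $x_t$ via the triangle inequality and the Lipschitzness of $\nabla f_t$ from Assumption~\ref{assum:smooth}. Concretely,
\begin{align*}
    \|\nabla_y f_t(x_t,y_t^*(x_t))\|
    \leq \rho + L_1\bigl(\|x_t-\hat x\|+\|y_t^*(x_t)-y_t^*(\hat x)\|\bigr).
\end{align*}
The term $\|x_t-\hat x\|$ is at most $D$ by Assumption~\ref{assum:bound_value}. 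For $\|y_t^*(x_t)-y_t^*(\hat x)\|$, I would use the standard implicit-function argument: differentiating the optimality condition $\nabla_y g_t(x,y_t^*(x))=0$ gives $\nabla y_t^*(x)=-(\nabla_y^2 g_t)^{-1}\nabla_x\nabla_y g_t$, so $\|\nabla y_t^*(x)\|\leq L_1/\mu_g$, and thus $\|y_t^*(x_t)-y_t^*(\hat x)\|\leq (L_1/\mu_g)\,\|x_t-\hat x\|\leq L_1 D/\mu_g$.

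Putting these pieces together gives
\begin{align*}
    \|\nabla_y f_t(x_t,y_t^*(x_t))\|\leq \rho + L_1 D + \frac{L_1^2 D}{\mu_g} = \frac{\rho\mu_g + D L_1\mu_g + D L_1^2}{\mu_g},
\end{align*}
and dividing by $\mu_g$ yields the claimed bound $\|v_t^*\|\leq (\rho\mu_g+DL_1^2+DL_1\mu_g)/\mu_g^2=M_v$. None of these steps is technically delicate; the only mildly subtle point is justifying that $y_t^*(\cdot)$ is $(L_1/\mu_g)$-Lipschitz uniformly in $t$, which follows cleanly from strong convexity of $g_t$ in $y$ together with $L_1$-Lipschitzness of $\nabla g_t$, both guaranteed by Assumptions~\ref{assum:convex} and~\ref{assum:smooth}.
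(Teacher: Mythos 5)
Your proof is correct and follows essentially the same route as the paper: anchor at the feasible point $\hat x$ from Assumption~\ref{assum:fea}, transport the bound on $\|\nabla_y f_t\|$ to $x_t$ using $L_1$-Lipschitzness of $\nabla f_t$ together with the $(L_1/\mu_g)$-Lipschitzness of $y_t^*(\cdot)$ (the paper cites Lemma 2.2 of Ghadimi--Wang for this, which you rederive via the implicit-function argument), bound $\|x_t-\hat x\|\leq D$, and finish with $\|(\nabla_y^2 g_t)^{-1}\|\leq 1/\mu_g$ from strong convexity. The resulting constant matches the paper's $M_v$ exactly, so no further comparison is needed.
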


\begin{proof}
     Based on Lemma 2.2 in \cite{ghadimi2018approximation}, it can be shown that $y_t^*(x)$ is $\frac{L_1}{\mu_g}$-Lipschitz continuous in $x$, i.e.,
\begin{align}\label{eq:y_lip}
    \|y_t^*(x)-y_t^*(x')\|\leq \frac{L_1}{\mu_g}\|x-x'\|
\end{align}
for any $x$ and $x'$. According to \cref{assum:fea}, let $\hat{x}\in\gX$ such that $\|\nabla_y f_t(\hat{x},y_t^*(\hat{x}))\|\leq \rho$.
Then it follows that
\begin{align*}
    \|\nabla_y f_t(x_t,y_t^*(x_t))\|=&\|\nabla_y f_t(\hat{x},y_t^*(\hat{x})) + \nabla_y f_t(x_t,y_t^*(x_t))-\nabla_y f_t(\hat{x},y_t^*(\hat{x}))\|\\
    \leq& \|\nabla_y f_t(\hat{x},y_t^*(\hat{x}))\|+\|\nabla_y f_t(x_t,y_t^*(x_t))-\nabla_y f_t(\hat{x},y_t^*(\hat{x}))\|\\
    \leq&  \rho+\left(L_1+\frac{L_1^2}{\mu_g}\right)\|x_t-\hat{x}\|\\
    \leq&\frac{\rho\mu_g+DL_1^2+DL_1\mu_g}{\mu_g}.
\end{align*}
Therefore,
\begin{align*}
    \|v_t^*\|=&\|(\nabla_y^2 g_t(x_t, y_t^*(x_t)))^{-1}\nabla_y f_t(x_t,y_t^*(x_t))\|\\
    \leq& \|(\nabla_y^2 g_t(x_t, y_t^*(x_t)))^{-1}\| \|\nabla_y f_t(x_t,y_t^*(x_t))\|\\
    \leq& \frac{1}{\mu_g}\|\nabla_y f_t(x_t,y_t^*(x_t))\|\\
    \leq& \frac{\rho\mu_g+DL_1^2+DL_1\mu_g}{\mu^2_g}.
\end{align*}

\end{proof}

 Based on Lemma 1 in \cite{ji2022will}, we can have that
\allowdisplaybreaks
 \begin{align*}
     &\|v_t^Q-v_t^*\|\\
     \leq& \left[Q_t(1-\lambda\mu_g)^{Q_t-1}L_2\lambda M_v+ \frac{1-(1-\lambda \mu_g)^{Q_t}(1+\lambda Q_t \mu_g)}{\mu_g} L_2 M_v \right]\|y_t^*-y_{t+1}\|\\
     &+(1-(1-\lambda\mu_g)^{Q_t})\frac{L_1}{\mu_g} \|y_t^*-y_{t+1}\|+(1-\lambda\mu_g)^{Q_t} \|v_{t}^0-v_{t}^*\|\\
     \leq& \left[\frac{1}{1-\lambda\mu_g}\frac{Q_t}{1+Q_t\log\frac{1}{1-\lambda\mu_g}}L_2\lambda M_v+ \frac{1}{\mu_g} L_2 M_v +\frac{L_1}{\mu_g} \right]\|y_t^*-y_{t+1}\|
     +(1-\lambda\mu_g)^{Q_t} \|v_{t}^0-v_{t}^*\|\\
     =&\left(\frac{L_2\lambda M_v}{(1-\lambda\mu_g)\log\frac{1}{1-\lambda\mu_g}}+\frac{L_2M_v+L_1}{\mu_g}\right)\|y_t^*-y_{t+1}\|
     +(1-\lambda\mu_g)^{Q_t} \|v_{t}^0-v_{t}^*\|\\
     \triangleq &C_Q \|y_t^*-y_{t+1}\|+(1-\lambda\mu_g)^{Q_t} \|v^0-v_{t}^*\|
 \end{align*}
 where $C_Q=\frac{L_2\lambda M_v}{(1-\lambda\mu_g)\log\frac{1}{1-\lambda\mu_g}}+\frac{L_2M_v+L_1}{\mu_g}$. 
By using the Young's inequality and \cref{lem:bound_v}, it follows that

%\allowdisplaybreaks
\begin{align*}
    \|v_t^Q-v_t^*\|^2\leq& \left(1+\frac{1}{\lambda\mu_g}\right) C_Q^2 \|y_t^*-y_{t+1}\|^2+(1+\lambda\mu_g)(1-\lambda\mu_g)^{2Q_t}\|v^0-v_{t}^*\|^2\\
    \leq& \left(1+\frac{1}{\lambda\mu_g}\right) C_Q^2 \|y_t^*-y_{t+1}\|^2+(1+\lambda\mu_g)(1-\lambda\mu_g)^{2Q_t} (2\|v^0\|^2+2 M_v)\\
    \triangleq & c_2 \|y_t^*-y_{t+1}\|^2+\epsilon_t^2
\end{align*}

 where $c_2=\left(1+\frac{1}{\lambda\mu_g}\right) C_Q^2$, and $\epsilon_t^2=(1+\lambda\mu_g)(1-\lambda\mu_g)^{2Q_t} (2\|v^0\|^2+2 M_v)$.

\section{Proof of \cref{lem:hg_error}}

Based on \cref{eq:exacthg} and \cref{eq:estimatehp}, it follows that

\allowdisplaybreaks
\begin{align}\label{eq:hg_error}
    &\|\nabla f_t(x_t,y_t^*(x_t)) - \widehat{\nabla} f_t(x_t,y_{t+1})\|^2\nonumber\\
    =&\|\nabla_x f_t(x_t, y_t^*(x_t)) -\nabla_x \nabla_y g_t(x_t, y_t^*(x_t)) v_t^* -\nabla_x f_t(x_t, y_{t+1}) +\nabla_x \nabla_y g_t(x_t, y_{t+1}) v_t^Q\|^2\nonumber\\
    =& \|\nabla_x f_t(x_t, y_t^*(x_t)) -\nabla_x f_t(x_t, y_{t+1})+\nabla_x \nabla_y g_t(x_t, y_{t+1}) v_t^Q-\nabla_x \nabla_y g_t(x_t, y_{t+1}) v_t^*\nonumber\\
    &+ \nabla_x \nabla_y g_t(x_t, y_{t+1}) v_t^*-\nabla_x \nabla_y g_t(x_t, y_t^*(x_t)) v_t^* \|^2\nonumber\\
    \leq& 3\|\nabla_x f_t(x_t,y_{t+1})-\nabla_x f_t(x_t,y_t^*(x_t))\|^2+3\|\nabla_x\nabla_y g_t(x_t,y_{t+1})\|^2\|v_t^*-v_t^Q\|^2\nonumber\\
    &+3\|\nabla_x\nabla_y g_t(x_t,y_t^*(x_t))-\nabla_x\nabla_y g_t(x_t,y_{t+1})\|^2\|v_t^*\|^2\nonumber\\
    \overset{(a)}{\leq}& 3L_1^2\|y_{t+1}-y_t^*(x_t)\|^2+3L_1^2\|v_t^*-v_t^Q\|^2+3L_2^2\|y_{t+1}-y_t^*(x_t)\|^2\|v_t^*\|^2\nonumber\\
    \overset{(b)}{\leq} & \left(3L_1^2+\frac{3L_2^2(\rho\mu_g+DL_1^2+DL_1\mu_g)^2}{\mu_g^4}\right)\|y_{t+1}-y_t^*(x_t)\|^2+3L_1^2\|v_t^*-v_t^Q\|^2\nonumber\\
    \overset{(c)}{\leq}& \left(3L_1^2(1+c_2)+\frac{3L_2^2(\rho\mu_g+DL_1^2+DL_1\mu_g)^2}{\mu_g^4}\right)\|y_{t+1}-y_t^*(x_t)\|^2+3L_1^2\epsilon_t^2\nonumber\\
    =& 3L_1^2\left[\left(1+c_2+\frac{L_2^2(\rho\mu_g+DL_1^2+DL_1\mu_g)^2}{L_1^2 \mu_g^4}\right)\|y_{t+1}-y_t^*(x_t)\|^2+\epsilon_t^2\right]
 \end{align}

where (a) is true because of \cref{assum:smooth}, (b) is true because of \cref{lem:bound_v} and (c) holds due to \cref{lem:assm45}.

Next, based on \cref{assum:convex} and \cref{assum:smooth}, for any $t$ and $\alpha<\frac{1}{L_1}$ we can have

\allowdisplaybreaks
\begin{align*}
    &\|y_{t+1}-y_{t}^*(x_{t})\|^2\\
    \leq& (1-\alpha\mu_g)\|y_{t}-y_{t}^*(x_{t})\|^2\\
    =& (1-\alpha\mu_g) \|y_{t}-y_{t-1}^*(x_{t-1})+y_{t-1}^*(x_{t-1})-y_{t}^*(x_{t})\|^2\\
    \overset{(a)}{\leq}& (1+\lambda)(1-\alpha\mu_g)\|y_{t}-y_{t-1}^*(x_{t-1})\|^2+(1+\frac{1}{\lambda})(1-\alpha\mu_g)\|y_{t-1}^*(x_{t-1})-y_{t}^*(x_{t})\|^2\\
    \leq& (1+\lambda)(1-\alpha\mu_g)\|y_{t}-y_{t-1}^*(x_{t-1})\|^2+2(1+\frac{1}{\lambda})(1-\alpha\mu_g) \|y_{t-1}^*(x_{t-1})-y_{t}^*(x_{t-1})\|^2\\
    &+2(1+\frac{1}{\lambda})(1-\alpha\mu_g) \|y_{t}^*(x_{t-1})-y_{t}^*(x_{t})\|^2\\
    \overset{(b)}{\leq}& (1+\lambda)(1-\alpha\mu_g)\|y_{t}-y_{t-1}^*(x_{t-1})\|^2+2(1+\frac{1}{\lambda})(1-\alpha\mu_g) \|y_{t-1}^*(x_{t-1})-y_{t}^*(x_{t-1})\|^2\\
    &+\frac{2L_1^2}{\mu_g^2}(1+\frac{1}{\lambda})(1-\alpha\mu_g) \|x_{t-1}-x_{t}\|^2,
\end{align*}

where (a) is based on the Young's inequality and (b) is due to \cref{eq:y_lip}.

 For $\lambda=\frac{\alpha\mu_g}{2}$, it follows that
\begin{align*}
    (1+\lambda)(1-\alpha\mu_g)=&(1+\frac{\alpha\mu_g}{2})(1-\alpha\mu_g)\\
    =& 1-\frac{\alpha\mu_g}{2}-\frac{\alpha^2\mu_g^2}{2}\\
    <&1-\frac{\alpha\mu_g}{2}.
\end{align*}
Let $G_1=1+c_2+\frac{L_2^2(\rho\mu_g+DL_1^2+DL_1\mu_g)^2}{L_1^2 \mu_g^4}$ and $\delta_t=G_1\|y_{t+1}-y_t^*(x_t)\|^2+\epsilon_t^2$.
Based on \cref{lem:assm45}, we can have that

\allowdisplaybreaks
\begin{align}\label{eq:delta_t}
    \delta_{t}=& G_1\|y_{t+1}-y_t^*(x_t)\|^2+\epsilon_{t}^2\nonumber\\
    \leq& G_1(1+\lambda)(1-\alpha\mu_g)\|y_{t}-y_{t-1}^*(x_{t-1})\|^2+2G_1(1+\frac{1}{\lambda})(1-\alpha\mu_g) \|y_{t-1}^*(x_{t-1})-y_{t}^*(x_{t-1})\|^2\nonumber\\
    &+\frac{2L_1^2 G_1}{\mu_g^2}(1+\frac{1}{\lambda})(1-\alpha\mu_g) \|x_{t-1}-x_{t}\|^2+\epsilon_{t}^2\nonumber\\
    \leq& G_1\left(1-\frac{\alpha\mu_g}{2}\right)\|y_t-y_{t-1}^*(x_{t-1})\|^2+G_2\|y_{t-1}^*(x_{t-1})-y_{t}^*(x_{t-1})\|^2\nonumber\\
    &+\frac{L_1^2 G_2}{\mu_g^2}\|x_{t-1}-x_t\|^2+\left(1-\frac{\alpha\mu_g}{2}\right)\epsilon_{t-1}^2\nonumber\\
    =& \left(1-\frac{\alpha\mu_g}{2}\right)\delta_{t-1}+G_2\|y_{t-1}^*(x_{t-1})-y_{t}^*(x_{t-1})\|^2+\frac{L_1^2 G_2}{\mu_g^2}\|x_{t-1}-x_t\|^2\nonumber\\
    \leq&\left(1-\frac{\alpha\mu_g}{2}\right)^{t-1} \delta_1 + G_2\sum_{j=0}^{t-2} \left(1-\frac{\alpha\mu_g}{2}\right)^j \|y_{t-1-j}^*(x_{t-1-j})-y_{t-j}^*(x_{t-1-j})\|^2\nonumber\\
    &+\frac{2L_1^2 G_2}{\mu_g^2} \sum_{j=0}^{t-2}\left(1-\frac{\alpha\mu_g}{2}\right)^j\|x_{t-1-j}-x_{t-j}\|^2
\end{align}

where $G_2=2G_1(1+\frac{2}{\alpha\mu_g})(1-\alpha\mu_g)$ and $\delta_1=\left(1+c_2+\frac{L_2^2(\rho\mu_g+DL_1^2+DL_1\mu_g)^2}{L_1^2 \mu_g^4}\right)\|y_{2}-y_1^*(x_1)\|^2+\epsilon_{1}^2$.

By substituting \cref{eq:delta_t} back into \cref{eq:hg_error}, we can obtain that
\begin{align*}
    &\|\nabla f_t(x_t,y_t^*(x_t)) - \widehat{\nabla} f_t(x_t,y_{t+1})\|^2\nonumber\\
    \leq& 3L_1^2\Bigg\{\left(1-\frac{\alpha\mu_g}{2}\right)^{t-1} \delta_1 + G_2\sum_{j=0}^{t-2} \left(1-\frac{\alpha\mu_g}{2}\right)^j \|y_{t-1-j}^*(x_{t-1-j})-y_{t-j}^*(x_{t-1-j})\|^2\nonumber\\
    &+\frac{2L_1^2 G_2}{\mu_g^2} \sum_{j=0}^{t-2}\left(1-\frac{\alpha\mu_g}{2}\right)^j\|x_{t-1-j}-x_{t-j}\|^2\Bigg\}.
\end{align*}

\section{Proof of \cref{thm:main}}

Based on Lemma 2.2 in \cite{ghadimi2018approximation}, we can have the following lemma to characterize the smoothness of the function $f_t(x,y_t^*(x))$ w.r.t. $x$ for any $t\in [1, T]$.
\begin{lemma}\label{lem:smooth_f}
    Suppose that \cref{assum:convex} and \cref{assum:smooth} hold. Then for any $t\in [1, T]$, the function $f_t(x,y_t^*(x))$ is $L_f$-smooth, i.e., for any $x$ and $x'$,
    \begin{align*}
        \|\nabla f_t(x,y_t^*(x))- \nabla f_t(x',y_t^*(x'))\|\leq L_f\|x-x'\|,
    \end{align*}
    where the constant $L_f$ is given by
    \begin{align*}
        L_f=L_1+\frac{2L_1^2+L_0^2 L_2}{\mu_g}+\frac{L_1^3+2L_0 L_1 L_2}{\mu_g^2}+\frac{L_0 L_1^2 L_2}{\mu_g^3}.
    \end{align*}
\end{lemma}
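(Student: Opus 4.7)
The statement to prove is that the composed hypergradient map $x\mapsto \nabla f_t(x,y_t^*(x))$ is Lipschitz with the specified constant $L_f$. Since the objective functions $f_t$ and $g_t$ satisfy time-independent smoothness conditions by Assumption~\ref{assum:smooth}, and the strong convexity constant $\mu_g$ is uniform in $t$, the proof is exactly the hypergradient smoothness calculation made classic by Lemma 2.2 of Ghadimi--Wang~\cite{ghadimi2018approximation}, applied to each $t$ separately. So my plan is to reproduce that calculation while making the constants explicit so they match the $L_f$ in the statement.

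First, I would write the closed-form hypergradient using the implicit function theorem on the inner-level optimality condition $\nabla_y g_t(x,y_t^*(x))=0$:
\begin{align*}
\nabla f_t(x,y_t^*(x)) = \nabla_x f_t(x,y_t^*(x)) - \nabla_x\nabla_y g_t(x,y_t^*(x))\bigl[\nabla_y^2 g_t(x,y_t^*(x))\bigr]^{-1}\nabla_y f_t(x,y_t^*(x)).
\end{align*}
Next, I would collect three tool bounds that hold uniformly in $t$ and $x$: (i) $\|y_t^*(x)-y_t^*(x')\|\le (L_1/\mu_g)\|x-x'\|$, which is already established in~\eqref{eq:y_lip}; (ii) $\|[\nabla_y^2 g_t(x,y)]^{-1}\|\le 1/\mu_g$ by strong convexity; and (iii) $\|\nabla_y f_t\|\le L_0$ from the $L_0$-Lipschitz continuity of $f_t$ in Assumption~\ref{assum:smooth}(1). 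The cross-derivative $\nabla_x\nabla_y g_t$ is bounded in operator norm by $L_1$ since $\nabla g_t$ is $L_1$-Lipschitz.

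The core of the proof is then a textbook telescoping. Let $z=(x,y_t^*(x))$ and $z'=(x',y_t^*(x'))$; note $\|z-z'\|\le (1+L_1/\mu_g)\|x-x'\|$. I will split $\|\nabla f_t(x,y_t^*(x))-\nabla f_t(x',y_t^*(x'))\|$ into four pieces via triangle inequality:
\begin{align*}
&\bigl\|\nabla_x f_t(z)-\nabla_x f_t(z')\bigr\|,\\
&\bigl\|\nabla_x\nabla_y g_t(z)-\nabla_x\nabla_y g_t(z')\bigr\|\cdot\|[\nabla_y^2 g_t(z)]^{-1}\|\cdot\|\nabla_y f_t(z)\|,\\
&\|\nabla_x\nabla_y g_t(z')\|\cdot\bigl\|[\nabla_y^2 g_t(z)]^{-1}-[\nabla_y^2 g_t(z')]^{-1}\bigr\|\cdot\|\nabla_y f_t(z)\|,\\
&\|\nabla_x\nabla_y g_t(z')\|\cdot\|[\nabla_y^2 g_t(z')]^{-1}\|\cdot\bigl\|\nabla_y f_t(z)-\nabla_y f_t(z')\bigr\|.
\end{align*}
For the third piece I use the resolvent identity $A^{-1}-B^{-1}=A^{-1}(B-A)B^{-1}$ combined with $\|\nabla_y^2 g_t(z)-\nabla_y^2 g_t(z')\|\le L_2\|z-z'\|$ from Assumption~\ref{assum:smooth}(3).

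Bounding each of the four terms by $L_1\|z-z'\|$, $L_2 \cdot (1/\mu_g)\cdot L_0\,\|z-z'\|$, $L_1\cdot (L_2/\mu_g^2)\cdot L_0\,\|z-z'\|$, and $L_1\cdot (1/\mu_g)\cdot L_1\,\|z-z'\|$ respectively, then substituting $\|z-z'\|\le (1+L_1/\mu_g)\|x-x'\|$ and expanding the product $(1+L_1/\mu_g)$ across the four pieces, I get a bound of the form $L_f\|x-x'\|$. The main obstacle—really the only subtle point—is algebraic bookkeeping: verifying that expanding $(L_1+L_0 L_2/\mu_g+L_0 L_1 L_2/\mu_g^2+L_1^2/\mu_g)\cdot(1+L_1/\mu_g)$ yields exactly $L_1+(2L_1^2+L_0^2 L_2)/\mu_g+(L_1^3+2L_0 L_1 L_2)/\mu_g^2+L_0 L_1^2 L_2/\mu_g^3$. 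I expect to have to slightly re-choose which factor carries the $(1+L_1/\mu_g)$ on each of the four pieces (e.g., bounding one of the $L_0$'s via $L_0(1+L_1/\mu_g)$ rather than via $\|z-z'\|$) to match the stated constant; this is a minor rearrangement but is the only place the proof is finicky.
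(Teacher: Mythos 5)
Your method is correct and is essentially the paper's own proof: the paper establishes this lemma simply by invoking Lemma 2.2 of \cite{ghadimi2018approximation}, and your four-piece telescoping (Lipschitzness of $y_t^*$ from \eqref{eq:y_lip}, the $1/\mu_g$ bound on the Hessian inverse, the resolvent identity, and $\|\nabla_y f_t\|\le L_0$) is exactly the calculation behind that cited lemma, applied per round $t$ since all constants in \cref{assum:convex,assum:smooth} are uniform in $t$. One caveat about the final bookkeeping step, which you flag as the only finicky point: your decomposition produces $L_0 L_2/\mu_g$ in the $1/\mu_g$ slot (from $L_2\|z-z'\|\cdot(1/\mu_g)\cdot L_0$), whereas the stated constant has $L_0^2 L_2/\mu_g$, and no re-assignment of the factor $(1+L_1/\mu_g)$ among the four pieces can manufacture the extra power of $L_0$, because the bound $\|\nabla_y f_t\|\le L_0$ enters each piece at most once. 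You will therefore end with a perfectly valid smoothness constant that differs from the quoted $L_f$ in that single monomial; the quoted form follows the constant as it is commonly transcribed from \cite{ghadimi2018approximation}, and the discrepancy is immaterial to the regret analysis (which only needs some finite $L_f$), but the literal match you aim for at the end is not attainable from your decomposition.
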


For any $t\in [1,T]$ we first define
\begin{align*}
    F_{t,\eta}(x_{t+1}, y_{t}^*(x_{t+1}))=\frac{1}{W}\sum_{i=0}^{K-1} \eta^i f_{t-i}(x_{t+1-i}, y_{t-i}^*(x_{t+1-i})).
\end{align*}
Based on \cref{assum:smooth}, we can show the smoothness of $F_{t,\eta}(x, y_{t}^*(x))$ w.r.t. $x$.
\begin{lemma}\label{lem:smooth_F}
Suppose \cref{assum:smooth} holds. Then the following holds for function $F_{t,\eta}(x_{t+1}, y_{t}^*(x_{t+1}))$:
\begin{align*}
    F_{t,\eta}(x_{t+1}, y_{t}^*(x_{t+1}))-F_{t,\eta}(x_{t}, y_{t}^*(x_{t}))\leq \langle \nabla F_{t,\eta}(x_{t}, y_{t}^*(x_{t})), x_{t+1}-x_t \rangle + \frac{L_f}{2}\|x_{t+1}-x_t\|^2.
\end{align*}
\end{lemma}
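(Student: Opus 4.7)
The plan is to reduce the claim to the $L_f$-smoothness of each individual bilevel composite map $x \mapsto f_{t-i}(x, y_{t-i}^*(x))$, which is exactly the content of Lemma \ref{lem:smooth_f}, and then use the fact that smoothness is preserved under convex combinations with weights summing to one. Since $F_{t,\eta}$ is by construction a weighted average of such composites with weights $\eta^i/W$ (and $\sum_{i=0}^{K-1}\eta^i/W = 1$ by the definition of $W$), the gradient of $F_{t,\eta}$ is itself $L_f$-Lipschitz, and the claimed descent inequality is then the standard quadratic upper bound for $L_f$-smooth functions.

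Concretely, I would carry out three short steps. First, I would identify $F_{t,\eta}(\cdot, y_t^*(\cdot))$ with the single-variable map $\Phi(x) := \frac{1}{W}\sum_{i=0}^{K-1}\eta^i\phi_i(x)$, where $\phi_i(x) := f_{t-i}(x, y_{t-i}^*(x))$, using the window-averaging conventions of the paper so that $\Phi(x_t)$ and $\Phi(x_{t+1})$ match the two quantities being compared in the lemma. Second, I would apply Lemma \ref{lem:smooth_f} to each index $i \in \{0,\dots,K-1\}$ to obtain
\begin{align*}
\|\nabla \phi_i(x) - \nabla \phi_i(x')\| \le L_f \|x - x'\|,
\end{align*}
and then combine these estimates via linearity of the gradient and the triangle inequality to conclude
\begin{align*}
\|\nabla \Phi(x) - \nabla \Phi(x')\|
\le \sum_{i=0}^{K-1} \frac{\eta^i}{W} \|\nabla \phi_i(x) - \nabla \phi_i(x')\|
\le L_f \|x - x'\|.
\end{align*}

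Third, I would invoke the classical descent inequality for $L_f$-smooth functions, obtained by applying the fundamental theorem of calculus to $s \mapsto \Phi(x_t + s(x_{t+1} - x_t))$ on $[0,1]$ and bounding the integrand using the Lipschitz continuity of $\nabla \Phi$ established in step two. This yields
\begin{align*}
\Phi(x_{t+1}) - \Phi(x_t) \le \langle \nabla \Phi(x_t), x_{t+1} - x_t \rangle + \frac{L_f}{2}\|x_{t+1} - x_t\|^2,
\end{align*}
which, after rewriting in the paper's notation via $\Phi(x) = F_{t,\eta}(x, y_t^*(x))$, is precisely the asserted bound.

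The main thing to be careful about is the notational identification of $F_{t,\eta}(\cdot, y_t^*(\cdot))$ with the single-variable function $\Phi$ given the shifted window-averaged form used in the paper; once this identification is made, no estimates beyond Lemma \ref{lem:smooth_f} are needed, so there is no substantial technical obstacle beyond this bookkeeping step.
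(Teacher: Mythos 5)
There is a genuine gap, and it lies exactly at the step you flag as ``bookkeeping'': the identification of $F_{t,\eta}(\cdot\,,y_t^*(\cdot))$ with a single-variable map $\Phi(x)=\frac{1}{W}\sum_{i=0}^{K-1}\eta^i\phi_i(x)$ evaluated at a single point is not valid under the paper's definitions. In this paper $F_{t,\eta}(x_{t+1},y_t^*(x_{t+1}))$ is defined as $\frac{1}{W}\sum_{i=0}^{K-1}\eta^i f_{t-i}(x_{t+1-i},y_{t-i}^*(x_{t+1-i}))$, i.e.\ the $i$-th summand is evaluated at the \emph{shifted historical iterate} $x_{t+1-i}$, not at $x_{t+1}$; likewise $\nabla F_{t,\eta}(x_t,y_t^*(x_t))=\frac{1}{W}\sum_{i}\eta^i\nabla f_{t-i}(x_{t-i},y_{t-i}^*(x_{t-i}))$ is the window average of hypergradients taken along the trajectory, not $\nabla\Phi(x_t)=\frac{1}{W}\sum_i\eta^i\nabla\phi_i(x_t)$. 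So your $\Phi(x_{t+1})-\Phi(x_t)$ and $\langle\nabla\Phi(x_t),x_{t+1}-x_t\rangle$ are different objects from the left-hand side and the inner product appearing in the lemma, and your (otherwise correct) descent-lemma argument proves a statement about a different function. This trajectory-evaluated reading is not optional: the telescoping in Lemma~\ref{lem:term_a} and the regret definition both rely on it, so you cannot simply redefine $F_{t,\eta}$ to be $\Phi$ without breaking the rest of the analysis.

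The paper's actual route is term-by-term: it writes the difference as $\frac{1}{W}\sum_i\eta^i[f_{t-i}(x_{t+1-i},y_{t-i}^*(x_{t+1-i}))-f_{t-i}(x_{t-i},y_{t-i}^*(x_{t-i}))]$, applies the descent inequality of Lemma~\ref{lem:smooth_f} to each composite $f_{t-i}(\cdot\,,y_{t-i}^*(\cdot))$ between the consecutive iterates $x_{t-i}$ and $x_{t+1-i}$, and then sums, which directly produces the trajectory-evaluated gradient $\nabla F_{t,\eta}(x_t,y_t^*(x_t))$ in the inner product. (Note that even this argument tacitly replaces the per-term displacement $x_{t+1-i}-x_{t-i}$ by $x_{t+1}-x_t$ in step (a) of the paper's proof, which is a separate point you would want to be aware of; but it is the term-by-term decomposition, not an aggregate smoothness claim for a single-variable $\Phi$, that the lemma requires.) Your step establishing that $\nabla\Phi$ is $L_f$-Lipschitz is fine as mathematics but is not the missing ingredient here.
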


\begin{proof}
For any $x$ and $x'$, we can know that
\begin{align*}
    &F_{t,\eta}(x_{t+1}, y_{t}^*(x_{t+1}))-F_{t,\eta}(x_{t}, y_{t}^*(x_{t}))\\
    =&\frac{1}{W}\sum_{i=0}^{K-1} \eta^i f_{t-i}(x_{t+1-i}, y_{t-i}^*(x_{t+1-i}))-\frac{1}{W}\sum_{i=0}^{K-1} \eta^i f_{t-i}(x_{t-i}, y_{t-i}^*(x_{t-i}))\\
    =& \frac{1}{W}\sum_{i=0}^{K-1} \eta^i [f_{t-i}(x_{t+1-i}, y_{t-i}^*(x_{t+1-i}))-f_{t-i}(x_{t-i}, y_{t-i}^*(x_{t-i}))]\\
    \overset{(a)}{\leq}& \frac{1}{W}\sum_{i=0}^{K-1} \eta^i \left[\langle \nabla f_{t-i}(x_{t-i}, y_{t-i}^*(x_{t-i}), x_{t+1}-x_t \rangle+\frac{L_f}{2}\|x_{t+1}-x_t\|^2\right]\\
    =& \left\langle \frac{1}{W}\sum_{i=0}^{K-1} \eta^i \nabla f_{t-i}(x_{t-i}, y_{t-i}^*(x_{t-i}), x_{t+1}-x_t\right\rangle +\frac{L_f}{2}\|x_{t+1}-x_t\|^2\\
    =& \langle \nabla F_{t,\eta}(x_{t}, y_{t}^*(x_{t})), x_{t+1}-x_t \rangle + \frac{L_f}{2}\|x_{t+1}-x_t\|^2
\end{align*}
where (a) holds because of the smoothness of function $f_t(x, y_t^*(x))$ w.r.t. $x$.

\end{proof}

Next, based on \cref{lem:smooth_F}, we can have that

\allowdisplaybreaks
\begin{align*}
    &F_{t,\eta}(x_{t+1},y_{t}^*(x_{t+1}))-F_{t,\eta}(x_{t},y_{t}^*(x_{t}))\\
    \leq& \langle \nabla F_{t,\eta}(x_{t},y_{t}^*(x_{t})), x_{t+1}-x_t \rangle + \frac{L_f}{2}\|x_{t+1}-x_t\|^2\\
    \leq& -\beta \langle \nabla F_{t,\eta}(x_{t},y_{t}^*(x_{t})), \widehat{\nabla} F_{t,\eta}(x_t,y_{t+1})\rangle +\frac{\beta^2 L_f}{2}\|\widehat{\nabla} F_{t,\eta}(x_t,y_{t+1})\|^2\\
    \leq& -\beta\|\nabla F_{t,\eta}(x_{t},y_{t}^*(x_{t}))\|^2-\beta \langle \nabla F_{t,\eta}(x_{t},y_{t}^*(x_{t})), \widehat{\nabla} F_{t,\eta}(x_t,y_{t+1})-\nabla F_{t,\eta}(x_{t},y_{t}^*(x_{t}))\rangle\\
    &+\frac{\beta^2 L_f}{2}\|\widehat{\nabla} F_{t,\eta}(x_t,y_{t+1})\|^2\\
    \leq& -\left(\frac{\beta}{2}-\beta^2 L_f\right)\|\nabla F_{t,\eta}(x_{t},y_{t}^*(x_{t}))\|^2\\
    &+\left(\frac{\beta}{2}+\beta^2 L_f\right)\|\nabla F_{t,\eta}(x_{t},y_{t}^*(x_{t}))-\widehat{\nabla} F_{t,\eta}(x_t,y_{t+1})\|^2
\end{align*}
such that
\begin{align}\label{eq:regret_ab}
    \left(\frac{\beta}{2}-\beta^2 L_f\right)&\sum_{t=1}^T\|\nabla F_{t,\eta}(x_{t},y_{t}^*(x_{t}))\|^2\leq \underbrace{\sum_{t=1}^T [F_{t,\eta}(x_{t},y_{t}^*(x_{t}))-F_{t,\eta}(x_{t+1},y_{t}^*(x_{t+1}))]}_{(a)}\nonumber\\
    &+\left(\frac{\beta}{2}+\beta^2 L_f\right)\underbrace{\sum_{t=1}^T\|\nabla F_{t,\eta}(x_{t},y_{t}^*(x_{t}))-\widehat{\nabla} F_{t,\eta}(x_t,y_{t+1})\|^2}_{(b)}.
\end{align}

% (1) To characterize the variation of the objective function $f_t(\cdot, y_t^*(\cdot))$, we define
% \begin{align*}
%     V_{1,T}=\sum_{t=1}^T \sup_{x}[f_{t+1}(x,y_{t+1}^*(x))-f_t(x,y_t^*(x))].
% \end{align*}
% Intuitively, $V_{1,T}$ measures the largest fluctuations between the objective functions for adjacent rounds under the same outer-level decision variable.

(1) We first have the following lemma to bound the term (a) from above:
\begin{lemma}\label{lem:term_a}
    The following inequality holds:
    \begin{align*}
        \sum_{t=1}^T [F_{t,\eta}(x_{t},y_{t}^*(x_{t}))-F_{t,\eta}(x_{t+1},y_{t}^*(x_{t+1}))]\leq \frac{2MT}{W}+V_{1,T}.
    \end{align*}
\end{lemma}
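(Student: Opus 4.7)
My plan is to combine a telescoping argument with the supremum bound hard-wired into the definition of $V_{1,T}$. First I would expand each summand via the window-averaged form of $F_{t,\eta}$ (as in the proof of \cref{lem:smooth_F}), giving
\begin{align*}
& F_{t,\eta}(x_t, y_t^*(x_t)) - F_{t,\eta}(x_{t+1}, y_t^*(x_{t+1})) \\
&\quad = \frac{1}{W}\sum_{i=0}^{K-1}\eta^i\big[f_{t-i}(x_{t-i}, y_{t-i}^*(x_{t-i})) - f_{t-i}(x_{t+1-i}, y_{t-i}^*(x_{t+1-i}))\big].
\end{align*}
Summing over $t=1,\ldots,T$, swapping the order of summation, substituting $s=t-i$, and using the convention $f_t \equiv 0$ for $t \leq 0$, I would rewrite the quantity of interest as
\begin{align*}
\frac{1}{W}\sum_{i=0}^{K-1}\eta^i \sum_{s}\big[f_s(x_s, y_s^*(x_s)) - f_s(x_{s+1}, y_s^*(x_{s+1}))\big].
\end{align*}

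For each fixed $i$, I would apply the add-and-subtract trick with the intermediate quantity $f_{s+1}(x_{s+1}, y_{s+1}^*(x_{s+1}))$, splitting each summand as
\begin{align*}
&f_s(x_s, y_s^*(x_s)) - f_s(x_{s+1}, y_s^*(x_{s+1})) \\
&\quad = \big[f_s(x_s, y_s^*(x_s)) - f_{s+1}(x_{s+1}, y_{s+1}^*(x_{s+1}))\big] + \big[f_{s+1}(x_{s+1}, y_{s+1}^*(x_{s+1})) - f_s(x_{s+1}, y_s^*(x_{s+1}))\big].
\end{align*}
The first bracket telescopes in $s$, collapsing to boundary terms $f_{1-i}(\cdot) - f_{T-i+1}(\cdot)$ that are bounded in absolute value by $2M$ via \cref{assum:bound_value} together with the zero convention. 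The second bracket is dominated pointwise by $\sup_x[f_{s+1}(x, y_{s+1}^*(x)) - f_s(x, y_s^*(x))]$, which is exactly the summand defining $V_{1,T}$; summing over the valid range of $s$ therefore contributes at most $V_{1,T}$ (plus at most one leftover boundary term of size $\leq M$ when $s = 0$, which can be absorbed into the $M$-dependent part).

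Recombining the two pieces and invoking $\sum_{i=0}^{K-1}\eta^i = W$, so that the $1/W$ prefactor contracts the weighted sum, gives the claimed bound. The main obstacle will be the careful bookkeeping of boundary indices $s \leq 0$ and $s > T$ across all window positions $i$: the zero convention on $f_t$ creates leftover terms that must be aggregated cleanly so that they are absorbed into the $M$-dependent constant without contaminating the $V_{1,T}$ part. Tracking exactly how these per-window residuals accumulate as $i$ ranges over $0,\ldots,K-1$—and how the weights $\eta^i/W$ renormalize when summed against the boundary contributions—is what ultimately produces the $\tfrac{2MT}{W}$ scaling of the first term in the stated upper bound.
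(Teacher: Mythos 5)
Your add-and-subtract pivot $f_{s+1}(x_{s+1},y_{s+1}^*(x_{s+1}))$ is exactly the decomposition the paper uses (its terms (a.1) and (a.2)), and your treatment of the second bracket via the supremum matches the paper's handling of $V_{1,T}$. The gap is in the first bracket. By telescoping in time $s$ for each fixed window offset $i$, you collapse that piece to boundary terms of size at most $2M$ per offset, and after applying the weights $\eta^i/W$ with $\frac{1}{W}\sum_{i=0}^{K-1}\eta^i=1$ you can only conclude a bound of order $M$ (roughly $2M$ from the telescoped boundaries plus another $M$ from the $s=0$ leftovers), independent of $T$. No bookkeeping of these residuals can produce the $\tfrac{2MT}{W}$ term: the renormalization you invoke makes the weights sum to one, so there is no mechanism that introduces a factor $T/W$, and your concluding claim that the residual accounting "produces the $\tfrac{2MT}{W}$ scaling" is unfounded. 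The inequality your route actually yields, roughly $\sum_{t=1}^T[\cdots]\le 3M+V_{1,T}$, is a different statement and does not imply the lemma when $W\gtrsim T$; this is precisely the regime the paper relies on (after \cref{thm:main} it takes $\eta=1-o(1/T)$ so that $W=\omega(T)$ and the $T/(\beta W)$ term becomes negligible), so your bound cannot be substituted for the stated one.

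The paper obtains the $\tfrac{2MT}{W}$ term by keeping $t$ fixed and exploiting the overlap of the two windows at rounds $t$ and $t+1$: reindexing the weighted sums shows that the (a.1) piece equals $\frac{1}{W}\bigl[\eta^{K-1}f_{t+1-K}(\cdot)-f_{t+1}(\cdot)+\sum_{i=1}^{K-1}(\eta^{i-1}-\eta^i)f_{t+1-i}(\cdot)\bigr]$, which by \cref{assum:bound_value} is at most $\frac{M(1+\eta^{K-1})}{W}+\frac{M}{W}(1-\eta^{K-1})=\frac{2M}{W}$ at every round; summing over $t$ then gives $\frac{2MT}{W}$. In other words, the telescoping must be done across the window index $i$ within each round, where the geometric weight differences $\eta^{i-1}-\eta^i$ sum to less than one and hence deliver the crucial $1/W$ per-round factor, not across time for each fixed offset. (A minor shared caveat: both your argument and the paper's implicitly bound shifted partial sums of $\sup_x[f_{s+1}(x,y_{s+1}^*(x))-f_s(x,y_s^*(x))]$ by $V_{1,T}$, which requires treating those summands as effectively nonnegative; that issue is not specific to your proposal.)
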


\begin{proof}
    First, it is clear that
 
    \allowdisplaybreaks
    \begin{align}\label{eq:divide_a}
    &F_{t,\eta}(x_{t},y_{t}^*(x_{t}))-F_{t,\eta}(x_{t+1},y_{t}^*(x_{t+1}))\nonumber\\
    =&\frac{1}{W}\sum_{i=0}^{K-1}\eta^i f_{t-i}(x_{t-i}, y_{t-i}^*(x_{t-i}))- \frac{1}{W}\sum_{i=0}^{K-1}\eta^i f_{t-i}(x_{t+1-i}, y_{t-i}^*(x_{t+1-i}))\nonumber\\
    % =&\frac{1}{W}\sum_{i=0}^{w-1}\eta^i [f_{t-i}(x_{t-i}, y_{t-i}^*(x_{t-i}))-f_{t-i}(x_{t+1-i},y_{t+1-i}^*(x_{t+1-i}))]\\
    % &+\frac{1}{W}\sum_{i=0}^{w-1}\eta^i [f_{t-i}(x_{t+1-i},y_{t+1-i}^*(x_{t+1-i}))-f_{t-i}(x_{t+1-i}, y_{t-i}^*(x_{t+1-i}))]\\
    =& \underbrace{\frac{1}{W}\sum_{i=0}^{K-1}\eta^i [f_{t-i}(x_{t-i}, y_{t-i}^*(x_{t-i}))-f_{t+1-i}(x_{t+1-i},y_{t+1-i}^*(x_{t+1-i}))]}_{(a.1)}\nonumber\\
    &+\underbrace{\frac{1}{W}\sum_{i=0}^{K-1}\eta^i [f_{t+1-i}(x_{t+1-i}, y_{t+1-i}^*(x_{t+1-i}))-f_{t-i}(x_{t+1-i}, y_{t-i}^*(x_{t+1-i}))]}_{(a.2)}.
\end{align}

For the term (a.1), we can obtain that

\allowdisplaybreaks
\begin{align}\label{eq:a1}
    &\frac{1}{W}\sum_{i=0}^{K-1}\eta^i [f_{t-i}(x_{t-i}, y_{t-i}^*(x_{t-i}))-f_{t+1-i}(x_{t+1-i},y_{t+1-i}^*(x_{t+1-i}))]\nonumber\\
    =& \frac{1}{W} [f_t(x_t,y_t^*(x_t))+\eta f_{t-1}(x_{t-1},y_{t-1}^*(x_{t-1}))+...+\eta^{K-1} f_{t+1-K}(x_{t+1-K},y_{t+1-K}^*(x_{t+1-K}))\nonumber\\
    &-f_{t+1}(x_{t+1},y_{t+1}^*(x_{t+1})-\eta f_t(x_t,y_t^*(x_t))-...-\eta^{K-1} f_{t+2-K}(x_{t+2-K},y_{t+2-K}^*(x_{t+2-K})))]\nonumber\\
    =&\frac{1}{W}[\eta^{K-1} f_{t+1-K}(x_{t+1-K},y_{t+1-K}^*(x_{t+1-K}))-f_{t+1}(x_{t+1},y_{t+1}^*(x_{t+1}))]\nonumber\\
    &+\frac{1}{W}\sum_{i=1}^{K-1} (\eta^{i-1}-\eta^{i}) f_{t+1-i}(x_{t+1-i}, y_{t+1-i}^*(x_{t+1-i}))\nonumber\\
    \leq& \frac{M(1+\eta^{K-1})}{W}+\frac{M}{W}\sum_{i=1}^{K-1} (\eta^{i-1}-\eta^i)\nonumber\\
    =& \frac{2M}{W}
\end{align}

where the inequality holds because of \cref{assum:bound_value}.

For the term (a.2), we can have that
\begin{align}\label{eq:a2}
    &\frac{1}{W}\sum_{i=0}^{K-1}\eta^i [f_{t+1-i}(x_{t+1-i}, y_{t+1-i}^*(x_{t+1-i}))-f_{t-i}(x_{t+1-i},y_{t-i}^*(x_{t+1-i}))]\nonumber\\
    \leq&\frac{1}{W}\sum_{i=0}^{K-1}\eta^i \sup_{x} [f_{t+1-i}(x,y_{t+1-i}^*(x))-f_{t-i}(x,y_{t-i}^*(x))].
\end{align}

By substituting \cref{eq:a1} and \cref{eq:a2} back to \cref{eq:divide_a}, \cref{lem:term_a} can be proved:
\begin{align*}
    &\sum_{t=1}^T [F_{t,\eta}(x_{t},y_{t}^*(x_{t}))-F_{t,\eta}(x_{t+1},y_{t}^*(x_{t+1}))]\\
    \leq& \sum_{t=1}^T\left[\frac{2M}{W}+\frac{1}{W}\sum_{i=0}^{K-1}\eta^i \sup_{x} [f_{t+1-i}(x,y_{t+1-i}^*(x))-f_{t-i}(x,y_{t-i}^*(x))]\right]\\
    %+\frac{1}{W}\sum_{i=0}^{w-1}\eta^i L_f \sup_{x} \|y_{t+1-i}^*(x)-y_{t-i}^*(x)\|\right]\\
    \leq& \frac{2MT}{W}+V_{1,T}.
\end{align*}

\end{proof}

(2) Next, for the term (b) which captures the window-averaged hypergradient estimation error, it follows that

\allowdisplaybreaks
\begin{align}\label{eq:window_hg_error}
    &\sum_{t=1}^T\|\nabla F_{t,\eta}(x_{t},y_{t}^*(x_{t}))-\widehat{\nabla} F_{t,\eta}(x_t,y_{t+1})\|^2\nonumber\\
    =& \sum_{t=1}^T \left\|\frac{1}{W}\sum_{i=0}^{K-1} \eta^i[\nabla f_{t-i}(x_{t-i},y_{t-i}^*(x_{t-i}))-\widehat{\nabla} f_{t-i}(x_{t-i},y_{t+1-i})]\right\|^2\nonumber\\
    =& \sum_{t=1}^T \Bigg[\sum_{i=0}^{K-1} \frac{\eta^i}{W}\sum_{j=0}^{K-1}\frac{\eta^j}{W}\langle \nabla f_{t-i}(x_{t-i},y_{t-i}^*(x_{t-i}))-\widehat{\nabla} f_{t-i}(x_{t-i},y_{t+1-i}), \nonumber\\
    &\nabla f_{t-j}(x_{t-j},y_{t-j}^*(x_{t-j}))-\widehat{\nabla} f_{t-j}(x_{t-j},y_{t+1-j}) \rangle\Bigg]\nonumber\\
    \leq& \sum_{t=1}^T \Bigg[\sum_{i=0}^{K-1} \frac{\eta^i}{W}\sum_{j=0}^{K-1}\frac{\eta^j}{W} \Big[\frac{1}{2}\left\|\nabla f_{t-i}(x_{t-i},y_{t-i}^*(x_{t-i}))-\widehat{\nabla} f_{t-i}(x_{t-i},y_{t+1-i})\right\|^2\nonumber\\
    &+\frac{1}{2}\left\|\nabla f_{t-j}(x_{t-j},y_{t-j}^*(x_{t-j}))-\widehat{\nabla} f_{t-j}(x_{t-j},y_{t+1-j})\right\|^2\Big]\Bigg]\nonumber\\
    =& \sum_{t=1}^T\left[\frac{1}{W}\sum_{i=0}^{K-1} \eta^i\left\|\nabla f_{t-i}(x_{t-i},y_{t-i}^*(x_{t-i}))-\widehat{\nabla} f_{t-i}(x_{t-i},y_{t+1-i})\right\|^2 \right],
\end{align}

which boils down to characterize the hypergradient estimation error $\|\nabla f_{t}(x_{t},y_{t}^*(x_{t}))-\widehat{\nabla} f_{t}(x_{t},y_{t+1})\|^2$ on the outer level objective function at each round.

By leveraging \cref{lem:hg_error}, it is clear that

\allowdisplaybreaks
\begin{align}\label{eq:bound_whg_term}
    &\sum_{t=2}^T\|\nabla F_{t,\eta}(x_{t},y_{t}^*(x_{t}))-\widehat{\nabla} F_{t,\eta}(x_t,y_{t+1})\|^2\nonumber\\
    \leq& \sum_{t=2}^T\left[\frac{1}{W}\sum_{i=0}^{K-1} \eta^i\|\nabla f_{t-i}(x_{t-i},y_{t-i}^*(x_{t-i}))-\widehat{\nabla} f_{t-i}(x_{t-i},y_{t+1-i})\|^2 \right]\nonumber\\
    =& \sum_{t=2}^T\left[\frac{3L_1^2}{W}\sum_{i=0}^{K-1} \eta^i 
    \delta_{t-i}\right]\nonumber\\
    \leq& \frac{3L_1^2}{W}\sum_{t=2}^T\sum_{i=0}^{K-1}\eta^i\Bigg\{\underbrace{\left(1-\frac{\alpha\mu_g}{2}\right)^{t-1-i}\delta_1}_{(b.1)}\nonumber\\
    &+\underbrace{G_2\sum_{j=0}^{t-i-2} \left(1-\frac{\alpha\mu_g}{2}\right)^j \|y_{t-1-i-j}^*(x_{t-1-i-j})-y_{t-i-j}^*(x_{t-1-i-j})\|^2}_{(b.2)}\nonumber\\
    &+\underbrace{\frac{2L_1^2 G_2}{\mu_g^2}\sum_{j=0}^{t-2-i}\left(1-\frac{\alpha\mu_g}{2}\right)^j\|x_{t-1-i-j}-x_{t-i-j}\|^2}_{(b.3)} \Bigg\}.
\end{align}

Let $\gamma=\frac{\alpha}{2}$.  For the first term (b.1), it can be seen that for $1-\frac{\alpha\mu_g}{2}<\eta<1$

\allowdisplaybreaks
\begin{align}\label{eq:b1}
    \sum_{t=2}^T \sum_{i=0}^{K-1} \eta^i (1-\gamma\mu_g)^{t-1-i}\delta_1 =&\delta_1 \sum_{t=2}^T \sum_{i=0}^{K-1} \left[\left(\frac{1-\gamma\mu_g}{\eta}\right)^{t-1-i}\eta^{t-1}\right]\nonumber\\
    \leq&\delta_1\sum_{t=2}^T \eta^{t-1} \frac{\frac{1-\gamma\mu_g}{\eta}}{1-\frac{1-\gamma\mu_g}{\eta}}\nonumber\\
    =&\frac{\delta_1 (1-\gamma\mu_g)}{\eta-1+\gamma\mu_g} \frac{\eta(1-\eta^{T-1})}{1-\eta}\nonumber\\
    \leq& \frac{\delta_1\eta(1-\gamma\mu_g)}{(1-\eta)(\eta-1+\gamma\mu_g)}.
\end{align}

% Let $\gamma=\frac{\alpha}{2}$.  For the first term (b.1), it can be seen that for $\eta<1-\frac{\alpha\mu_g}{2}$
% \begingroup
% \allowdisplaybreaks
% \begin{align}\label{eq:b1}
%     \sum_{t=2}^T \sum_{i=0}^{w-1} \eta^i (1-\gamma\mu_g)^{t-1-i}\delta_1 =&\delta_1 \sum_{t=2}^T \sum_{i=0}^{w-1} \left[\left(\frac{\eta}{1-\gamma\mu_g}\right)^i(1-\gamma\mu_g)^{t-1}\right]\nonumber\\
%     =&\delta_1\sum_{t=2}^T (1-\gamma\mu_g)^{t-1} \frac{1-(\frac{\eta}{1-\gamma\mu_g})^w}{1-\frac{\eta}{1-\gamma\mu_g}}\nonumber\\
%     \leq& \frac{\delta_1 (1-\gamma\mu_g)}{1-\eta-\gamma\mu_g}\sum_{t=2}^T (1-\gamma\mu_g)^{t-1}\nonumber\\
%     =&\frac{\delta_1 (1-\gamma\mu_g)}{1-\eta-\gamma\mu_g} \frac{(1-\gamma\mu_g)[1-(1-\gamma\mu_g)^{T-1}]}{\gamma\mu_g}\nonumber\\
%     \leq& \frac{\delta_1(1-\gamma\mu_g)^2}{\gamma\mu_g(1-\eta-\gamma\mu_g)}.
% \end{align}
% \endgroup

For the second term (b.2), we have

\allowdisplaybreaks
\begin{align*}
  &\sum_{i=0}^{K-1}\eta^i \sum_{j=0}^{t-2-i} (1-\gamma\mu_g)^j \|y_{t-1-i-j}^*(x_{t-1-i-j})-y_{t-i-j}^*(x_{t-1-i-j})\|^2\\
  =& \sum_{j=0}^{t-2} (1-\gamma\mu_g)^j\|y_{t-1-j}^*(x_{t-1-j})-y_{t-j}^*(x_{t-1-j})\|^2\\
  &+\sum_{j=0}^{t-3} \eta(1-\gamma\mu_g)^j\|y_{t-2-j}^*(x_{t-2-j})-y_{t-1-j}^*(x_{t-2-j})\|^2\\
  &+\cdots\\
  &+\sum_{j=0}^{t-1-K} \eta^{K-1} (1-\gamma\mu_g)^j\|y_{t-K-j}^*(x_{t-K-j})-y_{t+1-K-j}^*(x_{t-K-j})\|^2\\
  =& \|y_{t-1}^*(x_{t-1})-y_t^*(x_{t-1})\|^2+[(1-\gamma\mu_g)+\eta]\|y_{t-2}^*(x_{t-2})-y_{t-1}^*(x_{t-2})\|^2\\
  &+[(1-\gamma\mu_g)^2+\eta(1-\gamma\mu_g)+\eta^2]\|y_{t-3}^*(x_{t-3})-y_{t-2}^*(x_{t-3})\|^2\\
  &+\cdots\\
  &+ [(1-\gamma\mu_g)^{K-2}+(1-\gamma\mu_g)^{K-3}\eta+...+\eta^{K-2}]\|y_{t+1-K}^*(x_{t+1-K})-y_{t+2-K}^*(x_{t+1-K})\|^2\\
  &+\sum_{j=0}^{t-1-K}\Big\{[(1-\gamma\mu_g)^{j+K-1}+\eta(1-\gamma\mu_g)^{j+K-2}+...+\eta^{K-1}(1-\gamma\mu_g)^j]\\
  &\cdot\|y_{t-K-j}^*(x_{t-K-j})-y_{t+1-K-j}^*(x_{t-K-j})\|^2\Big\}\\
  =&\|y_{t-1}^*(x_{t-1})-y_t^*(x_{t-1})\|^2+ \frac{1-(\frac{1-\gamma\mu_g}{\eta})^2}{1-\frac{1-\gamma\mu_g}{\eta}}\eta \|y_{t-2}^*(x_{t-2})-y_{t-1}^*(x_{t-2})\|^2\\
  &+\frac{1-(\frac{1-\gamma\mu_g}{\eta})^3}{1-\frac{1-\gamma\mu_g}{\eta}}\eta^2\|y_{t-3}^*(x_{t-3})-y_{t-2}^*(x_{t-3})\|^2+\cdots\\
  &+\frac{1-(\frac{1-\gamma\mu_g}{\eta})^{K-1}}{1-\frac{1-\gamma\mu_g}{\eta}}\eta^{K-2}\|y_{t+1-K}^*(x_{t+1-K})-y_{t+2-K}^*(x_{t+1-K})\|^2\\
  &+\frac{1-(\frac{1-\gamma\mu_g}{\eta})^K}{1-\frac{1-\gamma\mu_g}{\eta}}\eta^{K-1} \sum_{j=0}^{t-1-K} (1-\gamma\mu_g)^j\|y_{t-K-j}^*(x_{t-K-j})-y_{t+1-K-j}^*(x_{t-K-j})\|^2\\
  \leq& \frac{1}{1-\frac{1-\gamma\mu_g}{\eta}}\Bigg[\|y_{t-1}^*(x_{t-1})-y_t^*(x_{t-1})\|^2+\eta\|y_{t-2}^*(x_{t-2})-y_{t-1}^*(x_{t-2})\|^2\\
  &+\cdots+\eta^{K-2}\|y_{t+1-K}^*(x_{t+1-K})-y_{t+2-K}^*(x_{t+1-K})\|^2\\
  &+\sum_{j=0}^{t-1-K} \eta^{K-1+j}\|y_{t-K-j}^*(x_{t-K-j})-y_{t+1-K-j}^*(x_{t-K-j})\|^2\Bigg],
\end{align*}

such that

\allowdisplaybreaks
\begin{align}\label{eq:b2}
     &\sum_{t=2}^T\sum_{i=0}^{K-1}\eta^i \sum_{j=0}^{t-2-i} (1-\gamma\mu_g)^j \|y_{t-1-i-j}^*(x_{t-1-i-j})-y_{t-i-j}^*(x_{t-1-i-j})\|^2\nonumber\\
     \leq& \frac{\eta}{\eta-1+\gamma\mu_g}\sum_{t=2}^T\Bigg[\|y_{t-1}^*(x_{t-1})-y_t^*(x_{t-1})\|^2+\eta\|y_{t-2}^*(x_{t-2})-y_{t-1}^*(x_{t-2})\|^2\nonumber\\
  &+\cdots+\eta^{K-2}\|y_{t+1-K}^*(x_{t+1-K})-y_{t+2-K}^*(x_{t+1-K})\|^2\nonumber\\
  &+\sum_{j=0}^{t-1-K} \eta^{K-1+j}\|y_{t-K-j}^*(x_{t-K-j})-y_{t+1-K-j}^*(x_{t-K-j})\|^2\Bigg]\nonumber\\
  \leq& \frac{\eta}{(1-\eta)(\eta-1+\gamma\mu_g)}\sum_{t=2}^T\sup_{x}\|y_{t-1}^*(x)-y_t^*(x)\|^2\nonumber\\
  =& \frac{\eta}{(1-\eta)(\eta-1+\gamma\mu_g)} H_{2,T}
  % =& \frac{4-2\alpha\mu_g}{\alpha\mu_g (2-2\eta-\alpha\mu_g)}H_{2,T}
\end{align}

where $H_{2,T}=\sum_{t=2}^T \sup_{x} \|y_{t-1}^*(x)-y_t^*(x)\|^2$.

Besides, we know that
\begin{align*}
    &\|x_{t-1}-x_t\|^2\\
    =&\beta^2\|\widehat{\nabla} F_{t-1}(x_{t-1},y_t)\|^2\\
    \leq& 2\beta^2\|\nabla F_{t-1}(x_{t-1}, y_{t-1}^*(x_{t-1}))-\widehat{\nabla} F_{t-1}(x_{t-1},y_t)\|^2+2\beta^2\|\nabla F_{t-1}(x_{t-1}, y_{t-1}^*(x_{t-1}))\|^2.
\end{align*}
Following the same analysis for the second term (b.2), the following result can be obtained that for the third term (b.3):

\allowdisplaybreaks
\begin{align}\label{eq:b3}
    &\sum_{t=2}^T\sum_{i=0}^{K-1}\eta^i\sum_{j=0}^{t-2-i}(1-\gamma\mu_g)^j\|x_{t-1-i-j}-x_{t-i-j}\|^2\nonumber\\
    \leq& \frac{\eta}{\eta-1+\gamma\mu_g}\sum_{t=2}^T\Bigg[\|x_{t-1}-x_{t}\|^2+\eta\|x_{t-2}-x_{t-1}\|^2\nonumber\\
  &+\cdots+\eta^{K-2}\|x_{t+1-K}-x_{t+2-K}\|^2\nonumber\\
  &+\sum_{j=0}^{t-1-K} \eta^{K-1+j}\|x_{t-K-j}-x_{t+1-K-j}\|^2\Bigg]\nonumber\\
  \leq& \frac{2\beta^2\eta}{\eta-1+\gamma\mu_g}\sum_{t=2}^T\Bigg[\|\nabla F_{t-1}(x_{t-1}, y_{t-1}^*(x_{t-1}))-\widehat{\nabla} F_{t-1}(x_{t-1},y_t)\|^2\nonumber\\
  &+\|\nabla F_{t-1}(x_{t-1}, y_{t-1}^*(x_{t-1}))\|^2\nonumber\\
  &+\eta\Big(\|\nabla F_{t-2}(x_{t-2}, y_{t-2}^*(x_{t-2}))-\widehat{\nabla} F_{t-2}(x_{t-2},y_{t-1})\|^2+\|\nabla F_{t-2}(x_{t-2}, y_{t-2}^*(x_{t-2}))\|^2\Big)\nonumber\\
  &+\cdots\nonumber\\
  &+\eta^{K-2}\Big(\|\nabla F_{t+1-K}(x_{t+1-K}, y_{t+1-K}^*(x_{t+1-K}))-\widehat{\nabla} F_{t+1-K}(x_{t+1-K},y_{t+2-K})\|^2\nonumber\\
  &~~~~~~+\|\nabla F_{t+1-K}(x_{t+1-K}, y_{t+1-K}^*(x_{t+1-K}))\|^2\Big)\nonumber\\
  &+\sum_{j=0}^{t-1-K} \eta^{K-1+j}\Big(\|\nabla F_{t-K-j}(x_{t-K-j}, y_{t-K-j}^*(x_{t-K-j}))-\widehat{\nabla} F_{t-K-j}(x_{t-K-j},y_{t+1-K-j})\|^2\nonumber\\
  &~~~~~~+\|\nabla F_{t-K-j}(x_{t-K-j}, y_{t-K-j}^*(x_{t-K-j}))\|^2\Big)
  \Bigg]\nonumber\\
  \leq& \frac{2\beta^2\eta}{\eta-1+\gamma\mu_g} \frac{1-\eta^{T-1}}{1-\eta} \sum_{t=2}^T \|\nabla F_{t-1}(x_{t-1}, y_{t-1}^*(x_{t-1}))-\widehat{\nabla} F_{t-1}(x_{t-1},y_t)\|^2\nonumber\\
  &+\frac{2\beta^2\eta}{\eta-1+\gamma\mu_g} \frac{1-\eta^{T-1}}{1-\eta} \sum_{t=2}^T\|\nabla F_{t-1}(x_{t-1}, y_{t-1}^*(x_{t-1}))\|^2\nonumber\\
  \leq& \frac{2\beta^2\eta}{(1-\eta)(\eta-1+\gamma\mu_g)}  \sum_{t=2}^T \|\nabla F_{t-1}(x_{t-1}, y_{t-1}^*(x_{t-1}))-\widehat{\nabla} F_{t-1}(x_{t-1},y_t)\|^2\nonumber\\
  &+\frac{2\beta^2\eta}{(1-\eta)(\eta-1+\gamma\mu_g)} \sum_{t=2}^T\|\nabla F_{t-1}(x_{t-1}, y_{t-1}^*(x_{t-1}))\|^2.
\end{align}

Therefore, by substituting \cref{eq:b1}, \cref{eq:b2} and \cref{eq:b3} into \cref{eq:bound_whg_term}, we can have that

\allowdisplaybreaks
\begin{align*}
    &\sum_{t=1}^T\|\nabla F_{t,\eta}(x_{t},y_{t}^*(x_{t}))-\widehat{\nabla} F_{t,\eta}(x_t,y_{t+1})\|^2\\
    \leq& \frac{3L_1^2}{W}\sum_{t=2}^T\sum_{i=0}^{K-1}\eta^i\Bigg\{(1-\gamma\mu_g)^{t-1-i}\delta_1\\
    &+G_2\sum_{j=0}^{t-i-2} (1-\gamma\mu_g)^j \|y_{t-1-i-j}^*(x_{t-1-i-j})-y_{t-i-j}^*(x_{t-1-i-j})\|^2\\
    &+\frac{2L_1^2 G_2}{\mu_g^2}\sum_{j=0}^{t-2-i}(1-\gamma\mu_g)^j\|x_{t-1-i-j}-x_{t-i-j}\|^2 \Bigg\}+\|\nabla f_1(x_1,y_1^*(x_1))-\widehat{\nabla} f_1(x_1,y_2)\|^2\\
    \leq& \frac{3L_1^2}{W}\Bigg[\frac{\delta_1\eta(1-\gamma\mu_g)}{(1-\eta)(\eta-1+\gamma\mu_g)}+G_2 \frac{\eta}{(1-\eta)(\eta-1+\gamma\mu_g)} H_{2,T}\\
    &+\frac{2L_1^2 G_2}{\mu_g^2}\frac{2\beta^2\eta}{(1-\eta)(\eta-1+\gamma\mu_g)}  \sum_{t=1}^T \|\nabla F_{t}(x_{t}, y_{t}^*(x_{t}))-\widehat{\nabla} F_{t}(x_{t},y_{t+1})\|^2\\
  &+\frac{2L_1^2 G_2}{\mu_g^2}\frac{2\beta^2\eta}{(1-\eta)(\eta-1+\gamma\mu_g)}\sum_{t=1}^T\|\nabla F_{t}(x_{t}, y_{t}^*(x_{t}))\|^2\Bigg]+\|\nabla f_1(x_1,y_1^*(x_1))-\widehat{\nabla} f_1(x_1,y_2)\|^2
\end{align*}

which gives that
\begin{align*}
    &\left(1-\frac{12 L_1^4\beta^2 G_2 \eta}{\mu^2_g W(1-\eta)(\eta-1+\gamma\mu_g)}\right)\sum_{t=1}^T\|\nabla F_{t,\eta}(x_{t},y_{t}^*(x_{t}))-\widehat{\nabla} F_{t,\eta}(x_t,y_{t+1})\|^2\\
    \leq& \frac{3L_1^2 \eta}{W(1-\eta)(\eta-1+\gamma\mu_g)}\Bigg[\delta_1(1-\gamma\mu_g)+G_2 H_{2,T}+\frac{4L_1^2 G_2 \beta^2}{\mu_g^2}\sum_{t=1}^T\|\nabla F_{t}(x_{t}, y_{t}^*(x_{t}))\|^2 \Bigg]\\
    &+\|\nabla f_1(x_1,y_1^*(x_1))-\widehat{\nabla} f_1(x_1,y_2)\|^2.
\end{align*}
Here $1-\frac{12 L_1^4\beta^2 G_2 \eta}{\mu^2_g W(1-\eta)(\eta-1+\gamma\mu_g)}\geq \frac{1}{2}$ because
\begin{align*}
    \beta^2\leq \frac{\beta}{L_f}
            \leq& \frac{\mu_g^2 L_f W (1-\eta)(\eta-1-\alpha\mu_g/2)}{24 L_1^4 G_2\eta} L_f\\
            =& \frac{\mu_g^2 W (1-\eta)(\eta-1-\alpha\mu_g/2)}{24 L_1^4 G_2\eta}.
\end{align*}

Let $G_3=1-\frac{12 L_1^4\beta^2 G_2 \eta}{\mu^2_g W(1-\eta)(\eta-1+\gamma\mu_g)}$ and $G_4=\frac{3L_1^2 \eta}{W(1-\eta)(\eta-1+\gamma\mu_g)}$. It is clear that
\begin{align*}
    &\sum_{t=1}^T\|\nabla F_{t,\eta}(x_{t},y_{t}^*(x_{t}))-\widehat{\nabla} F_{t,\eta}(x_t,y_{t+1})\|^2\\
    \leq& \frac{G_4}{G_3}[\delta_1(1-\gamma\mu_g)+G_2 H_{2,T}]+\frac{G_4}{G_3}\frac{4L_1^2 G_2 \beta^2}{\mu_g^2}\sum_{t=1}^T\|\nabla F_{t}(x_{t}, y_{t}^*(x_{t}))\|^2 \\
    &+\frac{1}{G_3}\|\nabla f_1(x_1,y_1^*(x_1))-\widehat{\nabla} f_1(x_1,y_2)\|^2.
\end{align*}

Based on \cref{eq:regret_ab}, we can have

\allowdisplaybreaks
\begin{align*}
     &\left(\frac{\beta}{2}-\beta^2 L_f\right)\sum_{t=1}^T\|\nabla F_{t,\eta}(x_{t},y_{t}^*(x_{t}))\|^2\\
    \leq& \sum_{t=1}^T [F_{t,\eta}(x_{t},y_{t}^*(x_{t}))-F_{t,\eta}(x_{t+1},y_{t}^*(x_{t+1}))]\\
    &+\left(\frac{\beta}{2}+\beta^2 L_f\right)\sum_{t=1}^T\|\nabla F_{t,\eta}(x_{t},y_{t}^*(x_{t}))-\widehat{\nabla} F_{t,\eta}(x_t,y_{t+1})\|^2\\
    \leq& \sum_{t=1}^T [F_{t,\eta}(x_{t},y_{t}^*(x_{t}))-F_{t,\eta}(x_{t+1},y_{t}^*(x_{t+1}))]+\left(\frac{\beta}{2}+\beta^2 L_f\right)\Bigg[\frac{G_4}{G_3}[\delta_1(1-\gamma\mu_g)+G_2 H_{2,T}]\\
    &+\frac{G_4}{G_3}\frac{4L_1^2 G_2 \beta^2}{\mu_g^2}\sum_{t=1}^T\|\nabla F_{t}(x_{t}, y_{t}^*(x_{t}))\|^2 +\frac{1}{G_3}\|\nabla f_1(x_1,y_1^*(x_1))-\widehat{\nabla} f_1(x_1,y_2)\|^2\Bigg]
    % =& \sum_{t=1}^T [F_{t,\eta}(x_{t},y_{t}^*(x_{t}))-F_{t,\eta}(x_{t+1},y_{t}^*(x_{t+1}))]+\left(\frac{\beta}{2}+\beta^2 L_F\right)\Bigg\{G_3[\delta_1(1-\gamma\mu_g)+G_2 H_{2,T}]\\
    % &+2\beta^2 G_3\sum_{t=1}^T\|\nabla F_{t}(x_{t}, y_{t}^*(x_{t}))\|^2\Bigg\},
\end{align*}

such that
\begin{align*}
    &\sum_{t=1}^T\|\nabla F_{t,\eta}(x_{t},y_{t}^*(x_{t}))\|^2\\
    \leq&\frac{1}{\frac{\beta}{2}-\beta^2 L_f-\frac{G_4}{G_3}\frac{4L_1^2 G_2 \beta^2}{\mu_g^2}\left(\frac{\beta}{2}+\beta^2 L_f\right) }\sum_{t=1}^T [F_{t,\eta}(x_{t},y_{t}^*(x_{t}))-F_{t,\eta}(x_{t+1},y_{t}^*(x_{t+1}))]\\
    &+\frac{\left(\frac{1}{2}+\beta L_f\right)\left\{\frac{G_4}{G_3}[\delta_1(1-\gamma\mu_g)+G_2 H_{2,T}]+\frac{1}{G_3}\|\nabla f_1(x_1,y_1^*(x_1))-\widehat{\nabla} f_1(x_1,y_2)\|^2\right\}}{\frac{1}{2}-\beta L_f-\frac{G_4}{G_3}\frac{4L_1^2 G_2 \beta}{\mu_g^2}\left(\frac{\beta}{2}+\beta^2 L_f\right) }\\
    \leq& \frac{\frac{2MT}{W}+V_{1,T}}{\frac{\beta}{2}-\beta^2 L_f-\frac{G_4}{G_3}\frac{4L_1^2 G_2 \beta^2}{\mu_g^2}\left(\frac{\beta}{2}+\beta^2 L_f\right)}+\frac{\left(\frac{1}{2}+\beta L_f\right)\frac{G_4 G_2 H_{2,T}}{G_3}}{\frac{1}{2}-\beta L_f-\frac{G_4}{G_3}\frac{4L_1^2 G_2 \beta}{\mu_g^2}\left(\frac{\beta}{2}+\beta^2 L_f\right) }\\
    &+\frac{\left(\frac{1}{2}+\beta L_f\right)\left\{\frac{G_4}{G_3}\delta_1(1-\gamma\mu_g)+\frac{1}{G_3}\|\nabla f_1(x_1,y_1^*(x_1))-\widehat{\nabla} f_1(x_1,y_2)\|^2\right\}}{\frac{1}{2}-\beta L_f-\frac{G_4}{G_3}\frac{4L_1^2 G_2 \beta}{\mu_g^2}\left(\frac{\beta}{2}+\beta^2 L_f\right) }\\
    =& O\left(\frac{T}{\beta W}+\frac{V_{1,T}}{\beta}+H_{2,T}\right).
\end{align*}
Here $\frac{1}{2}-\beta L_f-\frac{G_4}{G_3}\frac{4L_1^2 G_2 \beta}{\mu_g^2}\left(\frac{\beta}{2}+\beta^2 L_f\right)>0$ because
\begin{align*}
    &\frac{1}{2}-\beta L_f-\frac{G_4}{G_3}\frac{4L_1^2 G_2 \beta}{\mu_g^2}\left(\frac{\beta}{2}+\beta^2 L_f\right)\\
    \overset{(a)}{>}& \frac{1}{2}-\beta L_f-\frac{G_4}{G_3}\frac{4L_1^2 G_2 \beta^2}{\mu_g^2}\\
    \overset{(b)}{\geq}& \frac{1}{2}-2\beta L_f\\
    \overset{(c)}{\geq}& 0,
\end{align*}
where (a) and (c) are because $\beta L_f <\frac{1}{4}$, and (b) is because $\beta L_f>\frac{G_4}{G_3}\frac{4L_1^2 G_2 \beta^2}{\mu_g^2}$.

%%%%%%%%%%%%%%%%%%%%%%%%%%%%%%%%%%%%%%%%%%%%%%%%%%%%%%%%%%%%

\end{document}